\newtheorem{theorem}{Theorem}[section]
\newtheorem{remark}[theorem]{Remark}
\newtheorem{definition}[theorem]{Definition}
\newtheorem{lemma}[theorem]{Lemma}
\newtheorem{proposition}[theorem]{Proposition}
\newtheorem{conjecture}[theorem]{Conjecture}
\newtheorem{corollary}[theorem]{Corollary}
\newtheorem{example}[theorem]{Example}
\newtheorem{question}[theorem]{Question}
\newcommand\C{{\mathbb{C}}}
\def\cO{{\mathcal O}}
\def\cE{{\mathcal E}}
\def\cG{{\mathcal G}}
\def\cH{{\mathcal H}}
\def\cF{{\mathcal F}}
\def\cQ{{\mathcal Q}}
\begin{document}
\title[]{Algebraicity of foliations on complex projective manifolds, applications}

\author{Fr\'ed\'eric Campana}
\address{Universit\'e Lorraine \\
Nancy \\ }

\email{frederic.campana@univ-lorraine.fr}



\date{\today}

\maketitle

\tableofcontents


\begin{abstract} This is an expository text, originally intended for the ANR `Hodgefun' workshop, twice reported, organised at Florence, villa Finaly, by B. Klingler. 

We show that holomorphic foliations on complex projective manifolds have algebraic leaves under a certain positivity property: the `non pseudoeffectivity' of their duals. This permits to construct certain rational fibrations with fibres either rationally connected, or with trivial canonical bundle, of central importance in birational geometry. A considerable extension of the range of applicability is due to the fact that this positivity is preserved by the tensor powers of the tangent bundle. The results presented here are extracted from \cite{CPa}, which is inspired by the former results (\cite{Mi}, \cite{BM}, \cite{CPe}). In order to make things as simple as possible, we present here only the projective versions of these results, although most of them can be easily extended to the logarithmic or `orbifold' context.\end{abstract}


\section{Algebraicity criterion: statement}

In all the text, $X$ will denote an $n$-dimensional connected complex projective manifold, $A$ will be an ample line bundle on $X$. A Zariski open subset of $X$ is said to be `big' if its complement has codimension at least $2$ in $X$.

Let $\cF\subset TX$, the holomorphic tangent bundle of $X$, be a coherent saturated subsheaf of rank $r\leq n$. 

Recall that $\cF$ is saturated if $TX/\cF$ has no torsion. Since $\cF$ is saturated, the Zariski closed subset $Sing(\cF)\subset X$ over which $\cF$ is not a subbundle has codimension at least two, and any local section of $\cF$ defined on the complement of $Sing(\cF)$ extends across $Sing(\cF)$.

\begin{definition} We say that $\cF$ is a foliation if, for any two germs of sections $V,W$ of $\cF$, their Lie Bracket $[V,W]$ is also a germ of section of $\cF$. Equivalently, the Lie bracket defines a morphism of sheaves of $\cO_X$-modules\footnote{Indeed: $[fV,gW]=fg.[V,W]+(fV(g).W-g.W(f).V)\equiv fg.[V,W]$ modulo $\cF$ for any local sections $V,W$ of $\cF$, and holomorphic functions $f,g$.}$L:\wedge^2 \cF\to TX/\cF$ which vanishes identically on $X$.
\end{definition}

This is the `Frobenius' integrability condition. It implies that, over $U_{\cF}:=X\setminus Sing(\cF)$, each point has a local analytic open neighbourhood $U=F\times B$, with $F,B$ open subsets of $\C^r$ and $\C^{n-r}$ respectively such that $\cF_U=Ker(d\beta)$, where $\beta: U\to B$ is the projection onto the second factor $B$. In particular, for each such $x=(f,b)\in U$, there is a unique germ of manifold $F_x=F\times \{b\}$ of dimension $r$, called the germ of the `leaf' of $\cF$ at $x$, which has in each of its points $x':=(f',b)$, the subspace $\cF_{x'}$ as its tangent space. 

We thus define an equivalence relation on $U_{\cF}$ for which two points are equivalent if they can be connected by a chain of germs of leaves of $\cF$ locally defined as above. The classes of this relation are called the leaves of $\cF$. They are  the maximal connected, $r$-dimensional manifolds immersed (but not necessarily closed) in $U_{\cF}$, and tangent to $\cF$. Near the points of $Sing(\cF)$, the leaves of $\cF$ may have a chaotic behaviour.

\begin{example}\label{ex1} The simplest example of an everywhere regular foliation with non-closed leaves is the following: $X:=\C^2/\Lambda$ is an Abelian surface, and $\cF=X\times \{\C.v\}\subset TX=X\times \C^2$ is the trivial rank one subbundle generated by a vector $v\in \C^2$. Let $\Gamma:=\C.v\cap \Lambda$: this is a closed additive subgroup of $\C.v$ of rank $\rho$ either $0,1$ or $2$. So all leaves are isomorphic to $\C.v/\Gamma$, which is isomorphic to either $\C, \C^*$, or an elliptic curve when $\rho=2$, i.e when $\Gamma$ is a cocompact sublattice of $\C.v$. If the leaves of $\cF$ are not compact, they are Zariski-dense, but their topological closures are real tori of real dimension $d$ either $3$ or $4$. It may happen that $d=3$ even if the leaves of $\cF$ are isomorphic to $\Bbb C$. This happens for example if $v=e_1+\sqrt{2}.e_2$, $J.v=e_1+\sqrt{3}.e_3:=w$, the lattice defining $X$ being generated over $\Bbb Z$ by $e_1,\dots,e_4$, and the complex structure $J$ on $\Bbb R^4$ extending the one defined above ($J.v=w)$. One obtains so a two-dimensional compact complex torus $X$. It is less obvious whether this can be done in such a way that $X$ is an abelian variety.\end{example}

\begin{example}\label{ex1'} Let $X:=\Bbb P^2$, with affine coordinates $(x,y)$ and $\cF$ be defined as the Kernel of the (rational) $1$-form $w:=dy-\lambda.dx$, where $\lambda\in \C^*$. The leaves are then defined by the equation $y=x^{\lambda}$, and are thus algebraic if and only if $\lambda\in \Bbb Q^*$. The Zariski and topological closures of the leaves can be easily described in terms of $\lambda$.  Examples \ref{ex1} and \ref{ex1'} show that algebraicity is an arithmetic property in families of foliations. \end{example}

\begin{example}\label{ex2} The globally simplest foliations, however, are the ones arising from dominant rational fibrations $f:X\dasharrow Z$, where $Z$ is a manifold of dimension $p:=n-r$, and the generic fibre of $f$ is connected. The associated foliation is then $\cF:=(Ker(df))^{sat}$, where $\bullet^{sat}$ denotes the saturation inside $TX$, and $Sing(\cF)$ is the union of the indeterminacy locus of $f$, and of the singularities of the (reduction of the) fibres.
\end{example}

\begin{definition} Let $\cF\subset TX$ be a foliation on $X$. We say that $\cF$ is `algebraic' if all leaves of $\cF$ are algebraic submanifolds of $X$, that is: if their Zariski and topological closures coincide, or equivalently, have the same dimension $r$.

Using Chow-scheme theory, one shows that $\cF$ is algebraic if and only if it arises from a fibration, as in Example \ref{ex2}. 
\end{definition}

\smallskip

{\bf Problem:} Find `numerical' criteria for the algebraicity of foliations $\cF\subset TX$. The criteria we shall give and use below are the positivity of intersection numbers of $det(\cF)$ and `movable' classes of curves on either $X$ or the projectivisation of $\cF$. We shall see that these conditions also imply restrictions on the structure of the leaves of $\cF$.

\medskip

The criterion we shall prove and use here is the following, inspired by, and partially extending, former results (\cite{Mi}, \cite{BM}, \cite{CPe}):

\begin{theorem}\label{algcrit} (\cite{CPa}, \cite{Dr}) The foliation $\cF\subset TX$ is algebraic if the dual $\cF^*:=Hom_{\cO_X}(\cF,\cO_X)$ of $\cF$ is not pseudo-effective\footnote{This is proved, but not stated explicitely in \cite{CPa}. It is stated explicitely in \cite{Dr}.}.\end{theorem}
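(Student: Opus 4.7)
My plan is to combine a Boucksom--Demailly--P\u{a}un--Peternell (BDPP) style duality, which extracts positivity from the non-pseudo-effectivity of $\cF^*$, with a Bogomolov--McQuillan style algebraicity criterion, which deduces algebraic leaves from sufficient positivity of $\cF$ along a covering family of curves. The scheme has three stages: extract a movable class witnessing non-pseudo-effectivity, upgrade the resulting positive subsheaf to an ample one on a general complete intersection curve, and then run a formal-geometry argument to force algebraicity of the leaves.

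For the first stage, by the duality between pseudo-effectivity and movability, applied to the torsion-free sheaf $\cF^*$ via its projectivization $\mathbb{P}(\cF^*)$, the failure of $\cF^*$ to be pseudo-effective produces a strongly movable curve class $\alpha$ on $X$ such that the restriction $\cF^*|_C$ to a general complete intersection curve $C$ in a multiple of $\alpha$ is not nef; dually, $\cF|_C$ admits a subsheaf of positive degree. For the second stage, I would invoke the Mehta--Ramanathan restriction theorem for movable classes (as extended by Campana--P\u{a}un): the $\alpha$-Harder--Narasimhan filtration of $\cF$ restricts to the HN filtration of $\cF|_C$ on a general such $C$, so the maximal destabilizing piece $\cF_1\subset\cF$ furnishes a subsheaf $\cF_1|_C$ that is semistable of positive slope on the curve $C$, hence ample by Hartshorne's ampleness criterion on curves. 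Crucially, $\cF_1$ is globally defined as a subsheaf of $\cF$ (not merely along $C$), and its restriction to the general member of a covering family of curves is ample.

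With this covering family and an ample subsheaf of $\cF$ along its general member, the third stage invokes the Bogomolov--McQuillan algebraicity criterion, in the form refined by Kebekus--Sol\'a Conde--Toma and by \cite{CPa}, \cite{Dr}: the leaf of $\cF$ through every point of such a $C$ is algebraic, and its Zariski closure equals its topological closure. Since the curves $C$ sweep out $X$, every leaf is algebraic, so by the Chow-scheme characterization recalled above $\cF$ arises from a fibration as in Example \ref{ex2}. The main obstacle is this last invocation: deducing algebraicity of the formal/analytic leaf from ample positivity of $\cF$ along a tangent direction is the true analytic and arithmetic heart of the statement, originally proved via reduction mod $p$ and formal geometry by Bogomolov--McQuillan. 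Stages one and two, by contrast, are by now essentially formal consequences of the BDPP and Mehta--Ramanathan machinery, so it is the algebraicity-from-positivity step that I expect to need careful exposition.
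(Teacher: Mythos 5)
Your route does not prove the stated theorem; it proves at best the strictly weaker particular case, Theorem \ref{Mu-alg}. The gap is in Stage one. Non-pseudo-effectivity of the sheaf $\cF^*$ is, via Corollary \ref{Epseff} and BDPP, the existence of a movable class $\beta$ on (a modification of) $\Bbb P(\cF^*)$ with $\mathcal{O}(1).\beta<0$; it does \emph{not} yield a class $\alpha\in Mov(X)$, let alone a complete intersection class, with $\mu_{\alpha,max}(\cF^*)<0$, nor even one for which $\cF^*$ restricted to general complete intersection curves fails to be nef. The implication only goes the other way (Lemma \ref{notpseff}), and the paper explicitly warns that $Mov(\Bbb P(E))$ is much larger than what is detected by classes coming from $X$. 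So the Mehta--Ramanathan input of Stage two has no curve class to which it can be applied. Moreover, even if Stages one and two were granted, they would produce an ample subsheaf $\cF_1\subseteq\cF$ along a covering family of curves; unless $\cF_1=\cF$ (which is exactly the hypothesis $\mu_{\alpha,min}(\cF)>0$ of Theorem \ref{Mu-alg}), the Bogomolov--McQuillan/Kebekus--Sol\'a Conde--Toma criterion gives only that leaves contain positive-dimensional algebraic subvarieties, not that the leaves of $\cF$ themselves are algebraic. Your final step therefore also does not close.

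The paper's actual proof is entirely different and avoids both movable-class duality on $X$ and reduction mod $p$. It forms the germ $W\subset X\times X$ of dimension $n+r$ swept out by the graphs of the leaves, and shows its Zariski closure $V$ has dimension $n+r$ by bounding $h^0(W,kB)$ for $B$ ample: a section of $kB$ on $W$ expands in a power series along the leaves with coefficients in $H^0(Sym^m(\cF^*)\otimes kB)$, and the definition of non-pseudo-effectivity kills every term with $m>C'k$, leaving a sum of at most $C'k+1$ terms each of size $O((m+k)^{n+r-1})$ (computed on $\Bbb P(\cF)$, with Proposition \ref{reflex} handling the singular locus of $\cF$). This growth bound $h^0(V,kB)=O(k^{n+r})$ forces $\dim V=n+r$, i.e.\ algebraicity. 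This direct counting argument is precisely what allows the hypothesis to be weakened from positive minimal slope to non-pseudo-effectivity of the dual, which is the content of Theorem \ref{algcrit} beyond Theorem \ref{Mu-alg}.
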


We also have the next particular case:

 \begin{theorem}\label{Mu-alg} Let $\cF\subset TX$ be a foliation with $\mu_{\alpha,min}(\cF)>0$ for some $\alpha\in Mov(X)$. 
Then $\cF^*$ is not pseudo-effective, $\cF$ is an algebraic foliation. Moreover, its leaves have rationally connected closures.
  \end{theorem}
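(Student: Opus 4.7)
The plan is to verify the hypothesis of Theorem~\ref{algcrit} first, and then, for the rational connectedness of the leaf-closures, to reduce to the standard criterion that a smooth projective variety whose tangent bundle has strictly positive minimal slope against some movable class is rationally connected.

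First I would note that, by duality of the Harder--Narasimhan filtration, $\mu_{\alpha,\max}(\cF^*) = -\mu_{\alpha,\min}(\cF) < 0$, and more generally $\mu_{\alpha,\max}(\Sym^m \cF^*) \leq -m\,\mu_{\alpha,\min}(\cF) < 0$ for every $m\geq 1$. Were $\cF^*$ pseudo-effective, each $\Sym^m \cF^*$ would also be pseudo-effective, and a \textsc{bdpp}-type pairing on the projectivisation $\bP(\cF^*)$ (pairing the movable class $\alpha$ against the pseudo-effective tautological class) would then force some reflexive subsheaf of some $\Sym^m \cF^*$ to have non-negative $\alpha$-slope, contradicting the strict inequality above. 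Therefore $\cF^*$ is not pseudo-effective, and Theorem~\ref{algcrit} directly yields the algebraicity of $\cF$; by Example~\ref{ex2}, $\cF$ is then induced by a dominant rational fibration $f:X\dasharrow Z$.

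Next I would pass to a birational model on which $f$ is a morphism with smooth general fibre $F$, checking that $\alpha$ pulls back to a movable class and that $\cF|_F$ coincides generically with $TF$. A Mehta--Ramanathan-type restriction theorem for movable classes (in the form used in \cite{CPa}) should then transfer the inequality to the fibre, giving $\mu_{\alpha|_F,\min}(TF)>0$ on a very general fibre. At this point I would invoke the rational-connectedness criterion of Campana--Paun (built upon Bogomolov--McQuillan and Miyaoka): a smooth projective variety whose tangent bundle has strictly positive minimal slope against some movable class is rationally connected. Since rational connectedness is a birational invariant of smooth projective varieties, this gives the statement for the Zariski closures of the leaves of $\cF$ on the original $X$.

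The step I expect to be the main obstacle is the restriction argument: the descent of the hypothesis $\mu_{\alpha,\min}(\cF)>0$ on $X$ to $\mu_{\alpha|_F,\min}(TF)>0$ on a very general fibre with respect to a genuinely movable class on $F$. The other two ingredients --- the non-pseudo-effectivity of $\cF^*$ and the rational connectedness criterion for $TF$ --- are essentially appeals to known results, while the compatibility of Harder--Narasimhan filtrations with restriction to moving subvarieties must be handled carefully, for instance by choosing $\alpha$ as a complete-intersection class represented by a family of curves sweeping out the general fibre.
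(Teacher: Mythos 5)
Your treatment of the first two assertions is essentially the paper's own argument: from Theorem \ref{mu} one gets $\mu_{\alpha,\max}(\cF^*)=-\mu_{\alpha,\min}(\cF)<0$, hence $\mu_{\alpha,\max}(Sym^{[m]}(\cF^*)\otimes A^j)=-m\,\mu_{\alpha,\min}(\cF)+j\,A.\alpha<0$ for $m>c\,j$, which kills all sections because a nonzero section is a rank-one subsheaf of slope $\geq 0$; so $\cF^*$ is not pseudo-effective (this is Lemma \ref{notpseff}) and Theorem \ref{algcrit} gives algebraicity. Your detour through a `BDPP-type pairing on the projectivisation' is an unnecessary reformulation of this, but harmless.

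The rational-connectedness part is where the proposal has a genuine gap, and it is exactly the step you flag. First, restricting $\alpha$ to a very general fibre $F$ so as to obtain a genuinely movable class $\alpha_F\in Mov(F)$ with $\mu_{\alpha_F,\min}(TF)>0$ is not a routine Mehta--Ramanathan argument: a movable class on $X$ has no canonical restriction to a fibre (the curves of a representing family need not lie in fibres), and no restriction theorem for movable classes of this kind is available off the shelf. The paper itself remarks, right after Corollary \ref{KXz<0}, that producing such an $\alpha_z\in Mov(X_z)$ requires `much more work' and deliberately takes another route. Second, the criterion you then invoke --- a smooth projective variety with $\mu_{\alpha,\min}(TF)>0$ for some movable class is rationally connected --- is precisely the case $\cF=TX$ of the theorem you are proving (in the paper, the implication $4\Rightarrow 1$ of Corollary \ref{rc} is \emph{deduced} from this theorem); citing it is circular unless you set the whole proof up as an induction on dimension, which you do not, and which would in any case leave the case $\cF=TX$ without a base. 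The paper's actual argument avoids restriction entirely: pass to a neat model $f:X\to Z$ of the fibration defined by $\cF$ and factor it through the relative MRC fibration $f=h\circ g$. The key input is Theorem \ref{KX/Zpseff} (pseudo-effectivity of $K^-_{X/Z}$ when $K_{X_z}$ is pseudo-effective, a Viehweg-type positivity statement): pairing $\det(\cF^*)=K^-_{X/Z}$ against the original $\alpha$ on $X$ shows that $K_{X_z}$ cannot be pseudo-effective (so $X_z$ is uniruled by Miyaoka--Mori), and the same argument applied to $h$ shows that the intermediate base $Y_z$ would have non-pseudo-effective canonical bundle unless $Y=Z$, contradicting Corollary \ref{MRCrel}. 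All slope computations thus remain on $X$ against the given class $\alpha$, which is what makes the proof work.
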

  
  The notions of rational connectedness and $\mu_{\alpha,min}$ will be defined in sections 4 and 6 below. We now define the pseudo-effectivity.

\begin{definition} A coherent sheaf $\cG$ on $X$, is pseudo-effective if, for any $j>0, c>0$, $H^0(X,Sym^{[m]}(\cG)\otimes A^j)\neq\{0\}$ for some $m>j.c$. Here $Sym^{[m]}(\cG)$ denotes the reflexive hull (i.e: the double dual) of $Sym^m(\cG)$. 

Otherwise, if $H^0(X, Sym^{[m]}(\cG)\otimes A^j)=\{0\}, \forall m>c(A).j$, any $j>0$, and some constant $c(A)>0$, $\cG$ is said to be not pseudo-effective.\end{definition}

Equivalently, by the next remark, $\cG$ is not pseudo-effective if $\cG$ is locally free on $U=X\setminus S$, where $S\subset X$ is Zariski closed of codimension at least $2$,  and if $H^0(U,Sym^{m}(\cG)\otimes A^j)=\{0\}, \forall m>c(A).j$.

\begin{remark} Recall that $\cF$ is reflexive if the natural morphism $\cF\to \cF^{**}$ is an equality. Such a sheaf is torsionfree and normal (i.e: any section defined on a big Zariski open subset $U\subset X$ extends to $X)$. If $rk(\cF)=1$, reflexive means locally free on a smooth $X$. If $\cF\subset \cE$ is of rank one, with $\cE$ locally free, then $\cF$ is reflexive if and only if it is saturated in $\cE$ (but not necessarily a subbundle). Its powers $\cF^k\subset Sym^k\cE,k>0$ are then all saturated.
\end{remark}

We shall see in \S\ref{mov} and \S\ref{sslope} how to check the non-pseudo-effectivity of a sheaf $\cG$ using negativity or non-positivity of intersection numbers with `movable classes of curves'.

The following consequences of Theorem \ref{algcrit} can be stated without reference to pseudo-effectivity.

\begin{corollary}\label{ur}(\cite{CPa},\cite{CPe}) Let $s$ be a nonzero section of $\otimes^mTX\otimes L$, for some $m>0$ and $L$ a line bundle with $c_1(L)=0$. Assume that $s$ vanishes somewhere on $X$. Then $X$ is uniruled (i.e: covered by rational curves).
\end{corollary}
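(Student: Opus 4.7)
My plan is to construct, from the section $s$ and its vanishing, a foliation $\cF\subset TX$ with $\mu_{\alpha,\min}(\cF)>0$ for some movable class $\alpha$, and then apply Theorem~\ref{Mu-alg}: this will yield algebraic leaves with rationally connected closures covering $X$, which in particular makes $X$ uniruled. The construction naturally splits into two stages: first extract a positive rank-one subsheaf of $\otimes^m TX$ from $s$; then transfer the positivity back to $TX$.

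For the first stage, observe that $s\in H^0(X,\otimes^m TX\otimes L)$ is equivalent, via the adjunction $H^0(X,\otimes^m TX\otimes L)\cong \mathrm{Hom}(L^{-1},\otimes^m TX)$, to an injective morphism $\varphi:L^{-1}\hookrightarrow \otimes^m TX$. Let $\cG\subset \otimes^m TX$ be the saturation of the image of $\varphi$. As a rank-one saturated (hence reflexive) subsheaf of a locally free sheaf on the smooth manifold $X$, $\cG$ is a line bundle; explicitly, $\cG\cong L^{-1}(D)$, where $D\geq 0$ is the divisorial component of the zero scheme of $s$. Using $c_1(L)=0$, this gives $c_1(\cG)=[D]$.

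For the second stage, suppose first that $s$ vanishes along a divisor, i.e.\ $D\neq 0$. For $\alpha=A^{n-1}$ with $A$ ample (a movable class), one has $\mu_\alpha(\cG)=D\cdot\alpha>0$, so $\mu_{\alpha,\max}(\otimes^m TX)>0$. The Campana--P\u aun compatibility of maximal slopes with tensor products (the movable-polarization analogue of the classical Maruyama--Ramanan statement) then yields $\mu_{\alpha,\max}(TX)>0$, and the maximal $\alpha$-destabilizing saturated subsheaf $\cF\subset TX$ is $\alpha$-semistable with $\mu_{\alpha,\min}(\cF)=\mu_\alpha(\cF)>0$. By a Bogomolov--McQuillan--Campana--P\u aun integrability theorem, such a maximally destabilizing subsheaf of $TX$ is automatically a foliation, so Theorem~\ref{Mu-alg} applies to $\cF$ and concludes.

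The main obstacle I anticipate is the case in which $s$ vanishes only in codimension $\geq 2$: then $D=0$, $\cG\cong L^{-1}$, $c_1(\cG)=0$, and the slope comparison becomes an equality rather than a strict inequality. To bridge this gap I would pass to a modification of $X$, e.g.\ the blow-up along $Z(s)$, on which the pulled-back section acquires a divisorial zero along the exceptional locus while the pullback of $L$ remains numerically trivial, reducing to the codimension-one case by birational invariance of uniruledness. Alternatively one can try to extract positivity from a Bogomolov-type $c_2$-inequality applied to the generically semi-negative sheaf $\otimes^m TX$, the codimension-$\geq 2$ zeros of $s$ contributing positively to $c_2$ of the quotient $\otimes^m TX/\cG$. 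The subtle point in the blow-up approach is the relationship between $\pi^*\otimes^m TX$ and $\otimes^m T\widetilde{X}$, which is not an isomorphism and requires some bookkeeping to ensure the lifted section lies in a bundle of the correct form.
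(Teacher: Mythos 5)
Your first stage and your treatment of the divisorial case are sound and consistent with the paper's framework: the saturation $\cG\cong L^{-1}(D)$, the strict inequality $\mu_{\alpha}(\cG)=D\cdot\alpha>0$ when $D\neq 0$, the tensor-product theorem for maximal slopes (Theorem \ref{mu}), the integrability of the maximal destabilizing subsheaf (Corollary \ref{miy}), and the appeal to Theorem \ref{Mu-alg}. But the genuine content of the corollary is exactly the case you defer to the end: when $Z(s)$ has codimension $\geq 2$, so $D=0$ and $c_1(\cG)=0$. (Indeed, if $D\neq 0$ the rank-one quotient $L\otimes\cI_{Z(s)}$ of $\otimes^m\Omega^1_X$ already has non-pseudo-effective determinant $L(-D)$, and one concludes directly from Theorem \ref{qpseff}.) Neither of your two proposed repairs closes this case. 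The blow-up route fails for precisely the reason you flag: the positivity acquired by $\pi^*s$ is concentrated on a $\pi$-exceptional divisor $E$, which has zero intersection with pulled-back movable classes; and for a genuinely ample class $\alpha'$ upstairs you obtain a positive-slope subsheaf of $\pi^*(\otimes^m TX)$, not of $\otimes^m T\widetilde{X}$ --- intersecting with $\otimes^m T\widetilde{X}$ subtracts another effective exceptional divisor, and there is no control on the sign of the difference. The $c_2$-type inequality is not carried out and is not the mechanism used here.

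What the paper does instead is to change $\alpha$ rather than $X$: it takes $\alpha$ to be the class of a covering family of sufficiently ample (complete-intersection) curves \emph{constrained to pass through a point $x_0$ where $s$ vanishes}. The class of such a curve is still proportional to $A^{n-1}$, hence movable, but now the general member $C_t$ meets $Z(s)$ at $x_0$ even when $Z(s)$ has codimension $\geq 2$, so the saturation of the image of $s_{\vert C_t}$ in $(\otimes^m TX\otimes L)_{\vert C_t}$ has degree at least $1$. This yields $\mu_{\alpha,\max}(\otimes^m TX\otimes L)>0$ (via the restriction of slopes to general members of the family, in the spirit of Miyaoka and Mehta--Ramanathan, which is the one nontrivial ingredient hidden in the paper's one-line argument --- see \cite{Mi}, \cite{CPe}), and from there the argument proceeds exactly as in your divisorial case: $\mu_{\alpha,\max}(TX)>0$, the maximal destabilizing subsheaf is an algebraic foliation with rationally connected leaf closures, and $X$ is uniruled. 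Without this choice of $\alpha$ adapted to the vanishing locus --- or an equivalent device such as the numerical flatness (hence local freeness) of slope-zero quotients used in \cite{CPe} --- your argument does not establish the statement in the only case where it has content.
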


Recall that a line bundle on $X$ is said to be `big' if $h^0(X,m.L)\sim C.m^n$, for some $C=C(X,L)>0$, and $m\to +\infty$. A line bundle $L$ is big if and only if $mL=A+E$ for some $m>0, A$ ample and $E$ effective.

\begin{corollary}\label{cbig} (\cite{CPa}) Let $L$ be big line bundle on $X$, and assume there exists a sheaf injection $L\subset \otimes^m \Omega^1_X,$ for some $m>0$. Then $K_X$ is big.\end{corollary}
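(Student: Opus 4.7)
I would argue by contradiction, using Theorem \ref{Mu-alg} to extract an algebraic foliation with rationally connected leaves, then contradict the bigness of $L$ by restriction to a general fibre.

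Suppose that $K_X$ is not big. By the Boucksom-Demailly-Paun-Peternell duality between the pseudo-effective cone and the movable cone, there exists a nonzero class $\alpha \in Mov(X)$ with $K_X \cdot \alpha \le 0$, and hence $\mu_\alpha(TX) \ge 0$. Bigness of $L$ yields $L \cdot \alpha > 0$, and the inclusion $L \subset (\Omega^1_X)^{\otimes m}$ forces $\mu_{\alpha,\max}((\Omega^1_X)^{\otimes m}) > 0$; the Ramanan-Ramanathan multiplicativity of the maximal slope under tensor powers in characteristic zero then gives $\mu_{\alpha,\max}(\Omega^1_X) > 0$, equivalently $\mu_{\alpha,\min}(TX) < 0$. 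In particular $TX$ is not $\alpha$-semistable, and its maximally destabilizing subsheaf $\cF \subset TX$ is $\alpha$-semistable with $\mu_\alpha(\cF) > \mu_\alpha(TX) \ge 0$, so $\mu_{\alpha,\min}(\cF) > 0$. By the Bogomolov-McQuillan/Campana-Paun integrability theorem, $\cF$ is automatically a foliation.

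Theorem \ref{Mu-alg} now produces a rational fibration $f \colon X \dashrightarrow Y$ whose general fibre $F$ is rationally connected, of dimension $r \ge 1$ where $r$ is the rank of $\cF$. Iitaka's easy addition of Kodaira dimension gives $\kappa(L|_F) \ge \kappa(L) - \dim Y = n - (n-r) = \dim F$, so $L|_F$ is big on $F$. The relative cotangent sequence restricted to $F$ reads $0 \to \Omega^1_Y|_F \to \Omega^1_X|_F \to \Omega^1_F \to 0$ with trivial left-hand term; its $m$-th tensor power admits a filtration whose graded pieces are of the form $\cO_F^{\oplus N_b} \otimes (\Omega^1_F)^{\otimes b}$ for $0 \le b \le m$. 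Projecting the inclusion $L|_F \hookrightarrow (\Omega^1_X|_F)^{\otimes m}$ to the highest nonzero graded piece and then to a single factor yields an injection $L|_F \hookrightarrow (\Omega^1_F)^{\otimes b}$ for some $0 \le b \le m$. The case $b = 0$ embeds $L|_F$ into a trivial bundle and contradicts bigness. If $b \ge 1$, taking $k$-th powers gives $L|_F^k \hookrightarrow (\Omega^1_F)^{\otimes bk}$, whence $h^0(F, (\Omega^1_F)^{\otimes bk}) \ge h^0(F, L|_F^k) \sim c\, k^{\dim F}$, contradicting the vanishing $H^0(F, (\Omega^1_F)^{\otimes s}) = 0$ for every $s \ge 1$ on a rationally connected $F$. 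Therefore $K_X$ must be big.

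The principal obstacle is the foliation extraction in the first paragraph, which rests on two nontrivial inputs: the characteristic-zero multiplicativity of maximal slopes under tensor powers, and the Bogomolov-McQuillan/Campana-Paun integrability theorem that turns a sufficiently $\alpha$-positive subsheaf of $TX$ into a foliation. The remaining ingredients -- easy addition of Kodaira dimension, the relative cotangent sequence, and the vanishing of global covariant tensors on rationally connected manifolds -- are standard, but the filtration bookkeeping needs to be handled with care.
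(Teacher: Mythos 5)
Your argument is correct, but it takes a genuinely different route from the paper's. The paper proceeds in two stages: it first proves that $K_X$ is pseudo-effective by twisting ($L$ big gives $kL+K_X$ effective, hence $\cO_X\hookrightarrow \otimes^{mk}\Omega^1_X\otimes K_X$) and invoking Theorem \ref{Lpseff}, itself proved by induction on $\dim X$ via restriction to fibres; it then upgrades pseudo-effectivity to bigness by writing $N.K_X=det(L^{sat})+det(Q)$ and using Theorem \ref{qpseff} to see that $det(Q)$ is pseudo-effective, so that $N.K_X$ is big as the sum of a big and a pseudo-effective class. You instead contradict non-bigness in a single pass: the supporting-hyperplane refinement of Theorem \ref{pseff} (big $=$ interior of the pseudo-effective cone, dual to $Mov(X)$) gives $\alpha$ with $K_X.\alpha\le 0$; the slope computation together with Theorems \ref{mu} and \ref{miy} produces the foliation $\cF=TX^{\alpha,max}$ with $\mu_{\alpha,min}(\cF)>0$; Theorem \ref{Mu-alg} makes its leaves rationally connected of positive dimension; and easy addition transports the bigness of $L$ to the general fibre $F$, where the cotangent filtration forces a nonzero map from the big line bundle $L_{\vert F}$ into either a trivial bundle or $\otimes^b\Omega^1_F$ with $b\ge 1$ --- both impossible on a rationally connected $F$. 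Your route avoids the dimension induction of Theorem \ref{Lpseff} and dispenses with Theorem \ref{qpseff} entirely, at the price of exploiting bigness earlier (easy addition needs $L$ big, not merely pseudo-effective) and of one input beyond the paper's stated toolkit, namely that a non-big divisor meets some nonzero movable class non-positively (the paper's Theorem \ref{pseff} only covers the non-pseudo-effective case; the extension is standard cone duality). The paper's detour, by contrast, establishes the more general and reusable statements \ref{qpseff} and \ref{Lpseff}. Two routine points to tighten: pass to a neat birational model so that $f$ is regular with smooth general fibre before restricting, and note that in the case $b=0$ the nonzero composite $L_{\vert F}\to \cO_F^{\oplus N}$ yields a section of $L_{\vert F}^{-1}$, which is incompatible with bigness since $\dim F\ge 1$.
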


\begin{remark} The statement of Corollary \ref{cbig} is a form of stability specific to vector bundles $E:=\otimes^m\Omega^1_X$, and fails for general $E$: on any positive-dimensional $X$, if $E:=A\oplus A^{\otimes -2}$, with $L=A$ ample of rank one, $det(E)=-A$, although $A$ injects in $E$. 

Products $X=\Bbb P^1\times Z_{n-1}$, with $Z_{n-1}$ of general type, $f:X\to Z$ the projection, and $L=f^*(K_Z)\subset \Omega^{n-1}_X$, with  $\kappa(X,L)=(n-1)$ submaximal, show that $K_X$ may not be pseudo-effective. The positivity of subsheaves $L\subset \otimes^m  \Omega^1_X$ is preserved by $K_X$ only when $L$ is big. 

We shall formulate a more general version of `birational stability' for tensor powers of cotangent bundles in \S.\ref{birst}.
\end{remark} 

We shall apply in \S\ref{SVconj} the Corollary \ref{cbig} to moduli of canonically polarized manifolds.


\section{Proof of the algebraicity criterion.}

Let $S:=Sing{\cF}\subset X$ be the Zariski closed set of codimension at least $2$ over which $\cF$ is not a subbundle of $TX$, and $U_{\cF}:=X\setminus S$. For any $x\in U_{\cF}$, let $U_x=F_x\times B_x$ be an open neighbourhood $U_x$ of $x$ in $X$ such that the leaves of $\cF_{\vert U_x}$ are the fibres of the projection $\beta_x:U_x\to B_x$. If $\Delta\subset X\times X$ is the diagonal of $X$, and if $\Delta_{\cF}$ is the Zariski open set of $\Delta$ mapped to $U_{\cF}$ by (either) projections $p_i:X\times X\to X, i=1,2$, then there exists a germ $W$ of submanifold of dimension $n+r$ of $U_{\cF}\times U_{\cF}$ such that for each $x\in U_{\cF}$, $W\cap U_x\times U_x=U_x\times_{B_x}U_x$. In other words, the fibre $W_x$ over $x\in U_{\cF}$ of the first projection $p_{1\vert W}:W\to U_{\cF}$ is a germ of the leaf of $\cF$ through $x$.

$\bullet$ The leaves of $\cF$ will thus be algebraic if (and only if) the manifold $W$ is a germ of an algebraic subvariety of $X\times X$, or equivalently, if the Zariski closure $V$ of $W$ has the same dimension $n+r$ as $W$. Let $d:=dim(V)$; if $B$ is any ample line bundle on $V$, there exists a constant $C=C(B,V)>0$ such that $h^0(V, k.B)\cong C. k^d$, when $k\to +\infty$.
We thus only need to prove that $h^0(V,k.B)\leq C.k^{n+r}$ for some $C>0$, and some ample $B$ on $V$ in order to prove Theorem \ref{algcrit}.

$\bullet$ Notice next that it is sufficient to show that $h^0(W,k.B)\leq C.k^{n+r}$ for some $C>0$, since the restriction map $res: H^0(V,k.B)\to H^0(W,k.B)$ is injective by the Zariski density of $W$ in $V$. We now prove this last inequality.

$\bullet$ Let thus $B$ be ample on $X\times X$. Any section $s$ of $k.B$ on $W$ admits a unique development in power series along the fibres of $p_{1\vert W}:W\to U_{\cF}$ of the form $s=\sum_{m\geq 0} s_{m,k}$, where $s_{m,k}\in H^0(U_{\cF}, Sym^m(\cF^*)\otimes k.B), \forall m,k$.

Since $\cF^*$ is not pseudo-effective, $H^0(U_{\cF}, Sym^m(\cF^*)\otimes k.B)=\{0\}$ if $m>C'.k$, for some $C'=C'(B)$. 

Thus $h^0(W,k.B)\leq \sum_{m=0}^{m= C'.k}h^0(U_{\cF},Sym^m(\cF^*)\otimes kB)$.

If $U_{\cF}=X$ (i.e: $\cF$ is a subbundle of $TX)$, $h^0(U_{\cF}, Sym^m(\cF^*)\otimes k.B)\leq C". (m+k)^{n+r-1}, \forall k>0, \forall  m\geq 0$, and some $C">0$. 
Indeed, $h(m,k):=h^0(X, Sym^m(\cF^*)\otimes k.B)=h^0(\Bbb P(\cF), m.L+k.B'))$, where $L=\mathcal{O}_{\Bbb P(\cF)}(1)$, $B'$ is the pullback of $B$, and so $h(m,k)\leq C".(m+k)^{n+r-1}$, since $dim(\Bbb P(\cF))=n+r-1$, by dominating $L,B'$ by an ample line bundle $A'$ on $\Bbb P(\cF)$ such that $A'-B'$ and $A'-L$ are effective. Thus: 

$\sum_{m=0}^{m= C'.k}h^0(U_{\cF},Sym^m(\cF^*)\otimes kB)\leq \sum_{m=0}^{m=C'.k} C".(m+k)^{n+r-1}\leq (C'k+1).(C".(C'k+k)^{n+r-1})$$\leq (C'+1)^{n+r}C".k^{n+r}$, which concludes the proof of Theorem \ref{algcrit} if $U=X$.

The general case $U_{\cF}\neq X$ is obtained by applying \cite{Nak}, III.5.10(3), which shows that the same inequalities $h^0(U_{\cF}, Sym^m(\cF^*)\otimes k.B)\leq C". (m+k)^{n+r-1}, \forall k>0, \forall  m\geq 0$, and some $C">0$ still hold true, by constructing a suitable modification of $\Bbb P(\cF)$, and of $\mathcal{O}_{\Bbb P(\cF)}(1)$.

More precisely: if $p:P:=\Bbb P(\cF):=Proj(Sym^{\bullet}\cF)\to X$, with $L:=\mathcal{O}_{P}(1)$ the tautological line bundle on $P$, $p_*(L^m)=Sym^m(\cF),\forall m>0$. Moreover, if $\rho: P'\to P$ is a resolution of the singularities of $P$ which coincides with $P$ over $U_{\cF}$, there exists an effective, $\rho$-exceptional divisor $E\subset P'$ such that $(p\circ r)_*(m.(L+E))=Sym^{[m]}(\cF),\forall k>0$. See Proposition \ref{reflex} for a simplified proof.


 \section{Pseudoeffectivity and movable classes.}\label{mov}

 \begin{definition} Let $(C_t)_{t\in T}$ be an algebraic family of curves parametrised by an irreducible projective variety $T$. Assume that $C_t$ is irreducible for $t\in T$ generic, and that $X$ is covered by the union of the $C_t's$. Then $\alpha:=[C_t]\in H^{2n-2}(X,\Bbb Z)$ is independent of $t\in T$. We call such a class a {\bf geometrically movable class} on $X$.
 
 The closed convex cone of $H^{2n-2}(X,\Bbb R)$ generated by the geometrically movable classes is called the {\bf movable cone} of $X$, denoted $Mov(X)$, and its elements are the {\bf movable classes} of $X$. From \cite{BDPP}, we even have that $\alpha\in H^{n-1,n-1}(X,\Bbb R)$ is in $Mov(X)$ if and only if $\alpha.D\geq 0$ for any irreducible effective divisor $D$ on $X$.
 \end{definition}

 \begin{example}\label{exmov} 1. If $A$ is ample on $X$, then $A^{n-1}\in Mov(X)$. There are many more examples, such as the following:
 
 2. If $C\subset X$ is an irreducible curve, locally complete intersection, with ideal sheaf  $I_C$ and ample normal bundle $(I_C/I_C^2)^*$, then $\alpha:=[C]\in Mov(X)$. Indeed: for any irreducible effective divisor $D$, we have $D.C\geq 0$, since $D_{\vert C}$ is a quotient of $(I_C/I_C^2)^*$. See \cite{O}, \cite{P} for more on this situation.
 \end{example}

 \begin{remark}If $D$ is an effective $\Bbb Q$-divisor on $X$, $D.\alpha\geq 0,\forall \alpha \in Mov(X)$. More generally, if $m.D+A$ if effective for infinitely many $m>0$, and some ample $A$,  $D.\alpha\geq 0,\forall \alpha\in Mov(X)$. Said otherwise: if $D$ is pseudo-effective, then $D.\alpha\geq 0,\forall \alpha\in Mov(X)$.
  \end{remark}
 
 An important result is the following converse.

 \begin{theorem}\label{pseff}(\cite{BDPP},\cite{Nak}) Let $L$ be a line bundle on $X$. Then $L$ is pseudo-effective if and only if  $L.\alpha\geq 0,\forall \alpha \in Mov(X)$. 
 
 Equivalently, $L$ is {\bf not} pseudo-effective if and only if $L.\alpha<0$ for some $\alpha \in Mov(X)$.
 \end{theorem}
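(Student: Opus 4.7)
The easy direction is essentially the remark preceding the theorem. Suppose $L$ is pseudo-effective, so there exist an ample line bundle $A$ and integers $m_k\to+\infty$ with $m_kL+A$ effective; write $m_kL+A\equiv D_k\geq 0$. For any $\alpha\in\mathrm{Mov}(X)$ and any irreducible divisor $D$ appearing in $D_k$ we have $D\cdot\alpha\geq 0$ (by the $\mathrm{BDPP}$ characterisation of $\mathrm{Mov}(X)$ as the cone dual to irreducible effective divisors, already stated in the definition). Hence $D_k\cdot\alpha\geq 0$, so $L\cdot\alpha\geq -\tfrac{1}{m_k}A\cdot\alpha$, and letting $k\to\infty$ yields $L\cdot\alpha\geq 0$.

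The substantive content is the converse: if $L\cdot\alpha\geq 0$ for every $\alpha\in\mathrm{Mov}(X)$, then $L$ is pseudo-effective. I would set this up as a cone duality statement in $N^1(X)_{\mathbb R}$ and $N_1(X)_{\mathbb R}$. Both $\mathrm{Psef}(X)\subset N^1(X)_{\mathbb R}$ and $\mathrm{Mov}(X)\subset N_1(X)_{\mathbb R}$ are closed convex cones, and the intersection pairing makes them sit in dual spaces. The easy direction above shows $\mathrm{Psef}(X)\subset\mathrm{Mov}(X)^{\vee}$; the theorem asserts equality. Equivalently, contrapositively, if $L\notin\mathrm{Psef}(X)$ we must produce $\alpha\in\mathrm{Mov}(X)$ with $L\cdot\alpha<0$.

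The strategy is Hahn--Banach separation combined with a geometric identification of the separating class. Since $\mathrm{Psef}(X)$ is a closed convex cone in the finite-dimensional space $N^1(X)_{\mathbb R}$, if $c_1(L)\notin\mathrm{Psef}(X)$ there exists a linear functional $\ell$ on $N^1(X)_{\mathbb R}$ with $\ell\geq 0$ on $\mathrm{Psef}(X)$ and $\ell(L)<0$. By Poincar\'e duality such an $\ell$ is represented by a class $\alpha\in N_1(X)_{\mathbb R}$, and the condition $\ell\geq 0$ on $\mathrm{Psef}(X)$ is exactly $\alpha\cdot D\geq 0$ for every pseudo-effective divisor $D$, in particular for every irreducible effective divisor. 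The characterisation of $\mathrm{Mov}(X)$ recalled in the definition (from \cite{BDPP}) then gives $\alpha\in\mathrm{Mov}(X)$, and $\alpha\cdot L=\ell(L)<0$, as required.

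The main obstacle, and the analytic core of \cite{BDPP}, lies precisely in this last geometric identification: proving that any class $\alpha$ which is non-negative against every irreducible effective divisor is actually a limit of classes of covering families of curves. This is the deep half of the BDPP theorem, established via singular Hermitian metrics, positive currents, and a mass-concentration / Monge--Amp\`ere argument producing, for a class on the boundary of the movable cone, a sequence of covering families whose classes approximate it. Alternatively, \cite{Nak} treats the divisorial Zariski decomposition to handle the pseudo-effective side directly. I would take these analytic results as black boxes and combine them with the elementary separation argument above.
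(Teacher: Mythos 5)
The paper does not actually prove this theorem: it is imported wholesale from \cite{BDPP} and \cite{Nak}, so there is no internal argument to compare yours against. Your write-up is correct as far as it goes, and its real content is the observation that the theorem is the bipolar dual of the description of $Mov(X)$ already quoted (from \cite{BDPP}) in the paper's definition of the movable cone: granting that $Mov(X)$ is exactly the cone of classes pairing non-negatively with every irreducible effective divisor, and that $\Psef(X)$ is the closed convex cone generated by irreducible effective divisors, the equality $\Psef(X)=Mov(X)^{\vee}$ follows from finite-dimensional cone separation exactly as you say; your easy direction is also fine (and only uses the trivial half of the quoted characterisation, namely that a covering family of curves meets any divisor non-negatively). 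Two caveats. First, be aware that essentially nothing is left to prove once the quoted characterisation of $Mov(X)$ is granted --- that characterisation and the theorem are equivalent by elementary convexity, so the entire analytic substance (producing covering families of curves dual to a non-pseudo-effective class via the mass-concentration argument for Monge--Amp\`ere equations) sits inside your black box, just as it sits inside the paper's citation. Second, a minor point you gloss: the separating functional lives a priori on $N^1(X)_{\mathbb R}$, and you must represent it by a class in $H^{n-1,n-1}(X,\mathbb R)$ (or $N_1(X)_{\mathbb R}$) to which the quoted criterion applies; for a projective manifold this is immediate from the perfect intersection pairing between $N^1$ and $N_1$, but it deserves a sentence.
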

 
 Pseudo-effectivity is fundamental in birational geometry when applied to $L=K_X$ because of the next:
 
 \begin{theorem}(\cite{MM}) If $K_X$ is not pseudo-effective, then $X$ is uniruled (i.e: covered by rational curves). \end{theorem}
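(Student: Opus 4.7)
The plan is to combine Theorem \ref{pseff} (BDPP) with Mori's bend-and-break technique to produce rational curves covering $X$.

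First, by Theorem \ref{pseff}, the hypothesis that $K_X$ is not pseudo-effective provides a movable class $\alpha \in Mov(X)$ with $K_X \cdot \alpha < 0$. Since $Mov(X)$ is by definition the closed convex cone generated by the geometrically movable classes $[C_t]$ coming from covering families of irreducible curves, and since the half-space $\{\beta : K_X \cdot \beta < 0\}$ is open in $H^{n-1,n-1}(X,\bR)$, I can approximate $\alpha$ and rescale to obtain an irreducible covering family $(C_t)_{t \in T}$ of curves on $X$ with $K_X \cdot C_t < 0$ for generic $t$.

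Second, through a general point $x \in X$ there passes a curve $C_{t_0}$ of this family; let $f : C \to X$ be the normalization of $C_{t_0}$. The goal is to apply Mori's bend-and-break lemma: if $f$ admits a positive-dimensional family of deformations fixing a preimage of $x$ and non-trivially varying the map, then a suitable limit of these deformations degenerates into a cycle that must contain a rational curve through $x$. The expected dimension of deformations of $f$ fixing a single point is, up to lower-order terms, $-K_X \cdot C - n \cdot g(C)$; in characteristic zero this may be negative once $g(C) > 0$, so the required deformations are not directly produced.

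The main obstacle, as in the original Miyaoka--Mori argument, is precisely to force the deformation space to be positive-dimensional, and this is handled by reduction to positive characteristic. Spread $X$ and the family $(C_t)$ out over a Dedekind domain of mixed characteristic, choose a closed fiber $X_p$ of residue characteristic $p > 0$, and replace the chosen curve $C$ on $X_p$ by its pullback under the $e$-fold absolute Frobenius. Since Frobenius pullback multiplies $K_{X_p} \cdot C$ by $p^e$ while the geometric genus of $C$ is unchanged, the dimension estimate becomes strictly positive for $e$ large, and bend-and-break then produces a rational curve through the chosen point of $X_p$. A standard specialization argument on the relative Chow scheme of rational curves of bounded anticanonical degree lifts such a curve, through (a lift of) $x$, back to characteristic zero. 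Letting $x$ vary in a Zariski-dense subset of $X$, one concludes that $X$ is covered by rational curves, hence uniruled.
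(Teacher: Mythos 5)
The paper does not prove this theorem: it is quoted from Miyaoka--Mori, and the text immediately after the statement only remarks that ``the only known proof rests on positive characteristic techniques'' and recalls the original formulation (a covering family of curves with $K_X\cdot C_t<0$ can be traded for a covering family of rational curves). Your sketch is exactly that argument, correctly assembled from its two standard halves: first the BDPP duality (Theorem \ref{pseff}) to convert non-pseudo-effectivity of $K_X$ into a geometrically movable covering family with $K_X\cdot C_t<0$ --- and your openness/approximation step for extracting a single such family from a class in the closed cone $Mov(X)$ is fine --- and then Mori's bend-and-break after reduction mod $p$ and composition with a high Frobenius power, which is the genuinely hard part and is due to \cite{MM}. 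So your route coincides with the one the paper points to without writing down.

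Two small points of precision. First, the degree bound needed to run the specialization argument back to characteristic zero is not an ``anticanonical degree'' bound: $-K_X$ is not ample here, and the output of bend-and-break is a rational curve $\ell$ with $H\cdot\ell$ bounded in terms of $H\cdot C$ and $-K_X\cdot C$ for an auxiliary relatively ample $H$; it is this $H$-degree bound that makes the relevant relative space of rational curves of finite type over the arithmetic base, so that non-emptiness of the fibres over a dense set of closed points forces non-emptiness of the generic fibre. Second, your sketch invokes rather than proves both bend-and-break lemmas and the spreading-out/specialization machinery; that is entirely appropriate for a result the paper itself only cites, but be aware that this is where all the actual work of \cite{MM} lies, and that the strict attribution is: the positive-characteristic production of rational curves is Miyaoka--Mori, while the reduction from ``$K_X$ not pseudo-effective'' to ``there is a covering family with $K_X\cdot C_t<0$'' is the contribution of \cite{BDPP}.
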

 
 The only known proof rests on positive characteristic techniques. The original statement of Miyaoka-Mori is: if $(C_t)_{t\in T}$ is an algebraic family of curves (of possibly large genus) such that $K_X.C_t<0$, it is possible to find such a family where the $C_t's$ are rational curves!
 
\begin{remark} There exists (Nagata, Mumford, see \cite{mum}) divisors $D$ such that $D.\alpha\geq 0,\forall \alpha\in Mov(X)$, but which are not $\Bbb Q$-effective. 
The main conjecture of birational geometry is that $K_X$ is $\Bbb Q$-effective if $K_X.\alpha\geq 0,\forall \alpha\in Mov(X)$.
 \end{remark} 
 
 The following birational invariance is crucial, here.
 
 \begin{corollary}\label{bir} Let $\pi:X'\to X$ be a birational morphism between smooth projective manifolds. 
 
 1. Let $\alpha \in Mov(X)$, then $\pi^*(\alpha)\in Mov(X')$. 
 
 2. Let $\alpha' \in Mov(X')$, then $\pi_*(\alpha')\in Mov(X)$. 
 
 3. $\alpha\in Mov(X)$ if and only if $\alpha':=\pi^*(\alpha)\in Mov(X')$.
 \end{corollary}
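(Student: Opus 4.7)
The plan is to deduce all three parts from the BDPP dual characterization quoted just before the corollary: a class is movable if and only if it pairs non-negatively with every irreducible effective divisor. Once we have this, everything reduces to the projection formula together with the fact that $\pi_*$ sends irreducible effective divisors to effective divisors (or zero, on exceptional components) and $\pi^*$ sends effective divisors to effective divisors.

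For (1), let $\alpha\in Mov(X)$ and let $D'\subset X'$ be any irreducible effective divisor. By the projection formula,
\[
\pi^*(\alpha)\cdot D' \;=\; \alpha\cdot \pi_*(D').
\]
Now $\pi_*(D')$ is either $0$ (if $D'$ is $\pi$-exceptional) or an effective divisor on $X$, hence a non-negative combination of irreducible effective divisors. Applying BDPP on $X$ gives $\alpha\cdot \pi_*(D')\geq 0$, so $\pi^*(\alpha)\in Mov(X')$. For (2), let $\alpha'\in Mov(X')$ and let $D\subset X$ be any irreducible effective divisor. Again by projection formula,
\[
\pi_*(\alpha')\cdot D \;=\; \alpha'\cdot \pi^*(D).
\]
Writing $\pi^*(D)=\widetilde D + E$, where $\widetilde D$ is the strict transform and $E$ is an effective exceptional divisor, we decompose into a non-negative sum $\sum n_i D'_i$ of irreducible effective divisors on $X'$. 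Applying BDPP on $X'$ gives $\alpha'\cdot \pi^*(D)=\sum n_i(\alpha'\cdot D'_i)\geq 0$, whence $\pi_*(\alpha')\in Mov(X)$.

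For (3), the forward direction is exactly (1). For the converse, assume $\pi^*(\alpha)\in Mov(X')$. Since $\pi$ is birational between smooth projective manifolds, $\pi_*\pi^* = \mathrm{id}$ on cohomology (pushing forward the fundamental class gives $\pi_*[X']=[X]$). Applying (2) to $\alpha':=\pi^*(\alpha)$ yields $\alpha=\pi_*\pi^*(\alpha)\in Mov(X)$.

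There is no real obstacle here: the whole argument is a two-line application of the projection formula on top of BDPP. The only point requiring a moment of care is the handling of $\pi$-exceptional divisors in each direction, to make sure that pullback/pushforward of an irreducible effective divisor remains a non-negative combination of irreducible effective divisors (or zero); this is standard for a birational morphism between smooth projective varieties, where $\pi^*(D)$ is always effective and $\pi_*$ preserves effectivity on non-exceptional components.
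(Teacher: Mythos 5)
Your proof is correct and follows essentially the same route as the paper: the BDPP dual characterization of $Mov$, the projection formula, and the decomposition of $\pi^*(D)$ into strict transform plus effective exceptional part, with (3) deduced from $\pi_*\pi^*=\mathrm{id}$. If anything, your treatment of part (2) is slightly cleaner, since you argue for an arbitrary $\alpha'\in Mov(X')$ using only $\alpha'\cdot E\geq 0$, whereas the paper's displayed computation is written for a class of the form $\pi^*(\alpha)$.
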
 
 
 \begin{proof} The first (resp. second) claim follows from the equality: $\pi^*(\alpha).D'=\alpha.\pi_*(D')$ for any divisor $D'$ on $X'$ (resp. $\pi^*(\alpha).\pi^*(D)=\pi^*(\alpha).(D'+E')=\pi^*(\alpha).D'$, where $D'$ is the strict transform of $D$, and $E'$ is $\pi$-exceptional, since $\pi^*(\alpha).E'=0,\forall E'$, $\pi$-exceptional divisor on $X'$. The third claim follows from the others since $\pi_*(\pi^*(\alpha))=\alpha,\forall \alpha$. 
 \end{proof}
 
 The following is an immediate consequence of Theorem \ref{pseff} since $p^*(A)+\varepsilon.\mathcal{O}_{\Bbb P(\cE)}(1)$ is ample on $\Bbb P(\cE)$ for any $\varepsilon>0$ sufficiently small, and $Sym^m(\cE)=p_*(\mathcal{O}_{\Bbb P(\cE)}(m)),\forall m>0$:

 \begin{corollary}\label{Epseff} Let $\cE$ be a rank $r$ vector bundle on $X$. Then $\cE$ (identified with its sheaf of sections) is pseudo-effective if and only if $L:=\mathcal{O}_{\Bbb P(\cE)}(1)$ is pseudo-effective on $p:\Bbb P(\cE)\to X$, that is, if $L.\alpha\geq 0, \forall \alpha\in Mov(\Bbb P(\cE))$.
 \end{corollary}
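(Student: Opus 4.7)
The statement is the conjunction of two equivalences. That $L$ is pseudo-effective iff $L \cdot \alpha \geq 0$ for every $\alpha \in Mov(\Bbb P(\cE))$ is just Theorem \ref{pseff} applied to the smooth projective manifold $P := \Bbb P(\cE)$ and the line bundle $L = \cO_P(1)$. What has to be shown is the equivalence between pseudo-effectivity of $\cE$ on $X$ (in the coherent-sheaf sense of the earlier definition) and pseudo-effectivity of $L$ on $P$.

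The plan is to apply the projection formula. Since $\cE$ is locally free, $\Sym^m(\cE) = \Sym^{[m]}(\cE) = p_*(mL)$, and Leray gives
\[
H^0(X, \Sym^m(\cE) \otimes A^j) = H^0(P, mL + j p^*A), \quad \forall\, m, j \geq 0.
\]
So pseudo-effectivity of $\cE$ says that $mL + j p^*A$ has a nonzero section for pairs $(m,j)$ with $m/j$ arbitrarily large, while pseudo-effectivity of $L$ is the same statement with $j p^*A$ replaced by $j A'$ for some ample $A'$ on $P$. To bridge the two I pick $A'$ of the special form $A' := N p^*A + L$ for $N \gg 0$; that this is ample on $P$ --- equivalently, that $p^*A + \varepsilon L$ is ample for $\varepsilon = 1/N$ small, as announced in the excerpt --- follows from Kleiman's criterion, since $p^*A$ is nef on $P$ and $L$ is positive on the fibres of $p$.

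Both directions then become straightforward exponent manipulations. In the forward direction, after replacing a section of $\Sym^m(\cE) \otimes A^j$ by an $N$-th tensor power (to make the $A$-exponent divisible by $N$), one uses $Njp^*A = jA' - jL$ to repackage the resulting section on $P$ as a section of $(mN - j)L + jA'$, and the ratio $(mN-j)/j$ is arbitrarily large. Conversely, a section of $mL + jA'$ becomes, via $A' = Np^*A + L$, a section of $(m+j)L + Njp^*A$, and hence of $\Sym^{m+j}(\cE) \otimes A^{Nj}$, with $(m+j)/(Nj)$ arbitrarily large. The only substantive input is the existence of the ample class $Np^*A + L$ on $P$; everything else is bookkeeping on the exponents.
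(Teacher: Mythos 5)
Your proof is correct and follows essentially the same route as the paper, which likewise reduces the statement to Theorem \ref{pseff} via the projection formula $\Sym^m(\cE)=p_*(mL)$ and the ampleness of $p^*A+\varepsilon L$ on $\Bbb P(\cE)$ for small $\varepsilon>0$; you have merely written out the exponent bookkeeping that the paper leaves implicit.
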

 
The above Corollary \ref{Epseff} extends to the case when $\cG=E$ is not locally free by considering a suitable modification of $\Bbb P(\cG)$ as constructed in \cite[III,5.10]{Nak}. We give a simplified proof, following the strategy of \cite{Nak}, in Proposition \ref{reflex} below (to be applied to a smooth model $P$ of the main component of $\Bbb P(\cF)$, as in \cite[V,3.23]{Nak}).

\begin{proposition}\label{reflex} Let $p:P\to X$ be a fibration with $X$ normal, $P$ smooth, both projective, let $D$ be a Cartier divisor on $P$ such that $p_*(\cO_P(D))\neq 0$. 

1. If $E$ is an effective divisor on $P$ supported on the exceptional divisor $Exc(p)$ of $p$, and such that, for every divisorial component $\Gamma$ of $Exc(p)$, its restriction $E_{\Gamma}$  to $\Gamma$ is not pseudo-effective, then $p_*(\mathcal{O}_P(k.(D+\ell.E)))=p_*(\mathcal{O}_P(k.D))^{**},\forall k>0$, where $\ell>0$ is chosen to be sufficiently large, so that $(D+\ell.E)_{\Gamma}$ is not pseudo-effective for any $\Gamma$.

2. There exists a divisor $E$ satisfying the previous properties. 
\end{proposition}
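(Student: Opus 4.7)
For Part 2, I plan to construct $E$ via relative negativity for the exceptional cone. Starting from an ample divisor $H$ on $P$ and applying Kleiman's criterion on the fibres of $p$ over $p(Exc(p))$, one produces an effective combination $E = \sum a_i \Gamma_i$ with $a_i > 0$ on the divisorial components of $Exc(p)$ such that $-E$ is $p$-ample, i.e., $-E\cdot C > 0$ for every $p$-contracted irreducible curve $C$. For any such $\Gamma_j$, the morphism $p|_{\Gamma_j}\colon\Gamma_j\to p(\Gamma_j)$ has positive-dimensional fibres which sweep out $\Gamma_j$; hence the class $[C]$ of a generic irreducible curve contained in such a fibre lies in $Mov(\Gamma_j)$. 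Since $C$ is $p$-contracted, $E\cdot C < 0$; by the projection formula $E|_{\Gamma_j}\cdot [C] = E\cdot C < 0$, and Theorem \ref{pseff} gives that $E|_{\Gamma_j}$ is not pseudo-effective.

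For Part 1, set $\cF_k := p_*(\cO_P(kD))$ and $\cG_{k,\ell} := p_*(\cO_P(k(D+\ell E)))$. Both are torsion-free coherent sheaves agreeing on the big open $V := X\setminus p(Exc(p))$, so $\cG_{k,\ell}\subseteq \cF_k^{**}$ with equality on $V$. I first fix $\ell$ large enough that $(D+\ell E)\cdot C < 0$ on each of the finitely many numerical classes $[C]$ arising from the covering families above; such $\ell$ exists because $E\cdot C < 0$, depends only on $D$ and $E$, and ensures $(D+\ell E)|_{\Gamma_j}$ is not pseudo-effective for every $\Gamma_j$. It suffices to show that each local section $s\in\cF_k^{**}(U)$ lies in $\cG_{k,\ell}(U)$. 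By Hartogs (smooth $P$), $s$ extends to a meromorphic section $\tilde s$ of $\cO_P(kD)$ on $p^{-1}(U)$ with minimal pole orders $m_i\geq 0$ along $\Gamma_i$, so $\tilde s\in H^0(p^{-1}(U),\cO_P(kD+\sum_i m_i\Gamma_i))$ with $\tilde s|_{\Gamma_i}\ne 0$ whenever $m_i>0$. Suppose $s\notin\cG_{k,\ell}(U)$, so some $m_j>k\ell a_j$; pick $j$ maximising $m_j/(ka_j)$ and set $\ell_s := m_j/(ka_j) > \ell$. Choose a generic $x\in U\cap p(\Gamma_j)$ and a generic curve $C$ of the covering family lying in $\Gamma_j\cap p^{-1}(x)$ (projective, $p$-contracted, not contained in the zero locus of $\tilde s|_{\Gamma_j}$ nor in any $\Gamma_i$ with $i\ne j$). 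Then $\tilde s|_C$ is a nonzero regular section of $\cO_P(kD+\sum_i m_i\Gamma_i)|_C$ on the projective curve $C$, whence
\[
0 \leq (kD+\sum_i m_i\Gamma_i)\cdot C \leq k(D+\ell_s E)\cdot C,
\]
the second inequality using $m_j=k\ell_s a_j$, $m_i\leq k\ell_s a_i$ for $i\ne j$, and $\Gamma_i\cdot C\geq 0$ for $i\ne j$. But $(D+\ell_s E)\cdot C = (D+\ell E)\cdot C + (\ell_s-\ell)(E\cdot C) < 0$ as both summands are negative, contradiction.

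The main obstacle is Part 2: producing $E$ with $-E$ relatively $p$-ample requires a relative negativity statement for the exceptional divisors of a general projective (not necessarily birational) morphism $p$. For birational $p$ this is classical (Artin, Mumford in dimension two; Kleiman's criterion more generally); for the case relevant to the application (a smooth model of $\Bbb P(\cF)$), one descends to the positive-dimensional fibres lying over $p(Exc(p))$, where the problem reduces to ordinary ampleness. Once $E$ and $\ell$ are in hand, the Part 1 argument is the intersection-theoretic calculation above; its only delicate step is ensuring $C$ lies entirely in $p^{-1}(U)$, which is arranged by taking $x\in U$ and $C\subset p^{-1}(x)$, so that $\tilde s|_C$ is regular on all of $C$ and the degree inequality applies.
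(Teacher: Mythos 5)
Your Part 2 is where the real content of the proposition lies, and it is not proved. The property you ask of $E$ --- that $-E$ be $p$-ample, i.e.\ $-E\cdot C>0$ for \emph{every} $p$-contracted irreducible curve $C$ --- is literally impossible here: $p$ is a fibration with positive-dimensional general fibres, $E$ is supported over $p(Exc(p))$, so $E\cdot C=0$ for any curve in a general fibre. Even the weakened requirement that you actually use (negativity of $E$ on a covering family of $p$-contracted curves of each divisorial component $\Gamma_j$) is not something Kleiman's criterion produces: finding coefficients $a_i>0$ so that $\sum_i a_i\Gamma_i$ restricts negatively to each $\Gamma_j$ is precisely a negativity-lemma-type statement, and it needs $\Q$-factoriality somewhere --- $X$ is only normal and $p$ is not birational, so the classical birational argument (pull back $p_*H$ and subtract) is not available. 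The paper resolves this by flattening: it takes a projective birational $v:X'\to X$ with $X'$ smooth and $q:P'\to X'$ equidimensional ($u:P'\to P$ the induced map), chooses an effective $v$-exceptional $\Delta$ with $-\Delta$ ample on the divisorial part of $Exc(v)$ (possible because $X'$ is smooth), and sets $E:=u_*(q^*(\Delta))$; the non-pseudo-effectivity of $E_\Gamma$ is then verified against the complete-intersection movable class $\alpha_\Gamma=H_\Gamma^{d-2}$ by the projection formula, $E\cdot\alpha_\Gamma=\Delta\cdot q_*(u^*\alpha_\Gamma)<0$, using equidimensionality of $q$ to see that $q(\Gamma')$ is a divisorial component of $Exc(v)$.

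This gap propagates into your Part 1. Your degree computation is a clean argument, but it requires the test curve $C$ to be a \emph{complete} curve contained in $p^{-1}(U)$ (so that $\deg(\widetilde s|_C)\geq 0$ makes sense), which forces $C$ to be $p$-contracted; you therefore need the non-pseudo-effectivity of $(D+\ell E)_\Gamma$ to be witnessed by covering families of $p$-contracted curves. Neither the stated hypothesis of Part 1 (bare non-pseudo-effectivity of $E_\Gamma$) nor the $E$ actually constructed in Part 2 (negative against $H_\Gamma^{d-2}$, which is not a $p$-contracted class) supplies this. The paper's Part 1 runs differently and avoids the issue: since a nonzero section of a line bundle $M$ on $P$ with $M_\Gamma$ not pseudo-effective must vanish identically along $\Gamma$, any section of $p^*(p_*(\cO_P(kD))^{**})$, viewed as a section of $k(D+E)+E'$ with $E'$ effective and exceptional, has $E'\leq tE$, and then $H^0(P,k(D+E)+tE)=H^0(P,k(D+E))$ by the same vanishing. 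If you want to keep your intersection-theoretic route, you must either prove that non-pseudo-effectivity on $\Gamma$ can always be detected by $p$-contracted covering families (not true in general), or switch to the paper's section-vanishing/peeling mechanism.
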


\begin{proof} 1. 
Let $\Sigma\subset X$ be the codimension $2$ or more locus of points over which $p$ is not equidimensional, so that $Exc(p)\subset p^{-1}(\Sigma)$. There is some $\ell>0$ such that $(D+\ell.E)_{\Gamma}$ is not pseudo-effective for every $\Gamma$, since this is the case for $E$. We replace $E$ by $\ell.E$, so that $\ell=1$, to simplify notations. Then every section of $\cO_P(k.(D+E))$ vanishes on $Exc(p)$, for every $k>0$, and has thus no pole there. Every element $s$ of $H^0(P,p^*(p_*(\cO_P(k.D))^{**}))$ lies in $H^0(P,k.(D+E)+E')$, for some divisor $E'$ supported on $Exc(p)$, which may be supposed to be effective. But then $E'\leq t.E$ for some $t>0$ since the support of $E$ coincides with the divisorial part of  $Exc(p)$ by our non pseudo-effectivity assumption. Since $t.E$ is not pseudo-effective on each $\Gamma$, $H^0(P,k.(D+E)+t.E)=H^0(P,k.(D+E))=H^0(P, p^*(p_*(\cO_P(k.D))^{**}))$, which implies the claim.

2. Let $v:X'\to X$ be a modification, $q:P'\to X', u:P'\to P$ be the normalisation of the main component of $P\times_X X'$, so that $p\circ u=v\circ q$. We choose $v$ to be projective birational, with $X'$ smooth, and moreover  such that $q$ is equidimensional, and such that there exists an effective divisor $\Delta\subset Exc(v)$ such that $-\Delta$ is ample on the divisorial part of $Exc(v)$. 

Let $E:=u_*(q^*(\Delta))$, and let $H$ be a sufficiently ample divisor on $P$, with $H_{\Gamma}$ its restriction to $\Gamma$, for each divisorial component $\Gamma$ of $Exc(p)$. Let $\alpha_{\Gamma}:=H_{\Gamma}^{d-2}$, where $d:=dim(P)$, then $\alpha_{\Gamma}\in Mov(\Gamma)$. If $\Gamma'\subset P'$ is the strict transform of $\Gamma$ by $u$, then $q^*(\Delta).u^*(\alpha_{\Gamma})=E.\alpha_{\Gamma}$ since the generic member of the family $\alpha_{\Gamma}$ does not meet the indeterminacy locus of $u_{\vert \Gamma'}:\Gamma'\to \Gamma$. We thus just have to show that $q^*(\Delta).u^*(\alpha_{\Gamma})<0$.

Moreover, $q^*(\Delta).u^*(\alpha_{\Gamma})=\Delta.q_*(u^*(\alpha_{\Gamma}))<0$. Indeed: $q_*(u^*(\alpha_{\Gamma}))\in Mov(q(\Gamma'))$, $-\Delta$ is ample on the divisorial part of $Exc(v)$, and $q(\Gamma')$ is a divisorial component of $Exc(v)$, by the equidimensionality of $q$. Thus $E.\alpha_{\Gamma}<0$, and $E_{\Gamma}$ is not pseudo-effective.

\end{proof}

 The description of $Mov(\Bbb P(E))$ is, in general, quite delicate. An important particular situation where the pseudo-effectivity can be tested on $X$ rather than on $\Bbb P(E)$ is exposed in the next section.


 \section{Harder-Narasimhan filtrations and pseudo-effectivity.}\label{sslope}
 
 The data here are $X,\cG,\alpha\in Mov(X),r:=rk(\cG)>0$ as above. We assume always $\cG$ to be nonzero, torsionfree. See \cite{Kob}, Chap. V and \cite{GKP'} for a detailed treatment.
 
 The (always locally free, since $X$ is smooth), rank-one, sheaf $det(\cG)$ is defined as $det(\cG):=\wedge^r(\cG)^{**}$.
 
 \begin{definition} The slope of $\cG$ with respect to $\alpha$ is: $\mu_{\alpha}(\cG):=\frac{det(\cG).\alpha}{r}$. We say that $\cG$ is $\alpha$-stable (resp. $\alpha$-semi-stable) if: $\mu_{\alpha}(\cH)<\mu_{\alpha}(\cG)$ (resp.$\mu_{\alpha}(\cH)\leq \mu_{\alpha}(\cG))$,$\forall \cH\subsetneq \cG$ coherent subsheaf. \end{definition}

 Notice that if $\cF\subsetneq \cG$ is a saturated subsheaf of rank $r$ with torsionfree quotient $\cQ$ of rank $s$, we have: $det(\cG)=det(\cF)+det(\cQ)$ and so: 
 
 $(*)$ $\mu_{\alpha}(\cG)=\frac{r}{r+s}.\mu_{\alpha}(\cF)+\frac{s}{r+s}.\mu_{\alpha}(\cQ)\in [\mu_{\alpha}(\cF),\mu_{\alpha}(\cQ)]$.
 
 If $\cF$ is torsionfree, then $\mu_{\alpha}(\cF)=\mu_{\alpha}(\cF^{**})=-\mu_{\alpha}(\cF^*)$, since these equalities need to be checked only on a big Zariski-open subset of $X$.

 \medskip

 These notions are classical when $\alpha=[A]^{n-1}$, where $A$ is an ample line bundle. Just as in this classical case, we still have (with essentially the same proof, by induction on the rank, and the equality $(*)$):
 
 \begin{lemma} For $\cG,\alpha$ as above, there is a unique maximum $\cH\subset \cG$ with $\mu_{\alpha}(\cH)$ maximum (i.e: for any $\cH'\subset \cG$, we have: $\mu_{\alpha}(\cH')\leq \mu_{\alpha}(\cH):=\mu_{\alpha,max}(\cG)$, in case of equality: $\cH'\subset \cH).$ 
 
 $\cH$ is the $\alpha$-maximal destabilising subsheaf of $\cG$, denoted $\cG^{\alpha,max}$. It is, by construction, $\alpha$-semi-stable, and saturated inside $\cG$, so its quotient $\cG_{\alpha,min}:=\cG/\cG^{\alpha,max}$ is torsionfree (or zero iff $\cG$ is semi-stable). 
 
 
\end{lemma}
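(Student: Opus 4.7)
The plan is to mimic the classical proof of existence of the maximal destabilising subsheaf with respect to an ample polarisation, using induction on the rank together with the averaging identity $(*)$ already established. The whole argument hinges on three steps: (i) boundedness of the set of slopes of subsheaves, (ii) existence of a maximiser, and (iii) uniqueness, the last two of which are essentially formal consequences of $(*)$.

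Step (i) is the main obstacle and the one that differs most from the ample case. One needs to show that $\sup\{\mu_\alpha(\cH')\mid \cH'\subset\cG,\ \cH'\neq 0\}$ is finite. In the classical case where $\alpha=[A]^{n-1}$ this follows from Grauert--Mülich-type bounds, but for a general movable class $\alpha$ the argument rests on the Mehta--Ramanathan-type theory developed in \cite{GKP'}: one approximates $\alpha$ by complete-intersection classes, uses the fact that saturated subsheaves of $\cG$ are parametrised by a countable union of Quot-schemes, and shows that the possible values of $\det(\cH').\alpha$ for $\cH'$ saturated of fixed rank form a bounded-above set. I would simply invoke this boundedness as established in \cite{GKP'}.

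Step (ii): once boundedness is granted, write $\mu_{\alpha,\max}(\cG)$ for the supremum. For each rank $s\in\{1,\dots,r\}$ the supremum of $\mu_\alpha(\cH')$ over saturated subsheaves $\cH'\subset\cG$ of rank $s$ is attained (using that rationals with bounded denominator and bounded above attain their sup). Let $\cH\subset\cG$ be a saturated subsheaf realising $\mu_{\alpha,\max}(\cG)$ with rank as large as possible. Note that we may always pass to the saturation: if $\cH'\subset\cG$ is any nonzero subsheaf, and $(\cH')^{sat}$ its saturation in $\cG$, then $\det((\cH')^{sat})-\det(\cH')$ is represented by an effective divisor, hence intersects $\alpha\in Mov(X)$ non-negatively, so $\mu_\alpha((\cH')^{sat})\geq\mu_\alpha(\cH')$. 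This shows that the sup is attained on a saturated subsheaf, and explains why the maximum $\cH$ produced is automatically saturated.

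Step (iii): let $\cH_1,\cH_2\subset\cG$ both achieve $\mu_{\alpha,\max}(\cG)$. Consider the exact sequence
\[
0\to \cH_1\cap \cH_2\to \cH_1\oplus \cH_2\to \cH_1+\cH_2\to 0,
\]
where all terms are viewed inside $\cG$. Writing $r_i=\mathrm{rk}(\cH_i)$ and $r_0=\mathrm{rk}(\cH_1\cap\cH_2)$, the additivity of determinants on short exact sequences (checked on the big open locus where everything is locally free) gives
\[
r_1\mu_\alpha(\cH_1)+r_2\mu_\alpha(\cH_2)=r_0\mu_\alpha(\cH_1\cap\cH_2)+(r_1+r_2-r_0)\mu_\alpha(\cH_1+\cH_2).
\]
The left-hand side equals $(r_1+r_2)\mu_{\alpha,\max}(\cG)$, and $\mu_\alpha(\cH_1\cap\cH_2)\leq\mu_{\alpha,\max}(\cG)$ by definition, so $\mu_\alpha(\cH_1+\cH_2)\geq\mu_{\alpha,\max}(\cG)$, hence equality. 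Applying this with $\cH_1=\cH$ the rank-maximal maximiser chosen in (ii) and $\cH_2$ any other maximiser forces $\cH_1+\cH_2=\cH_1$, i.e.\ $\cH_2\subset \cH$. The same argument applied to an arbitrary $\cH'\subset\cG$ with $\mu_\alpha(\cH')=\mu_{\alpha,\max}(\cG)$ (not assumed saturated) shows $\cH'\subset\cH$, which is the clause of the lemma. Finally, $\alpha$-semistability of $\cH$ is immediate from the definition of $\mu_{\alpha,\max}$, and torsionfreeness of $\cG/\cH$ was ensured by taking $\cH$ saturated.
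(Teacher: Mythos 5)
Your overall architecture (boundedness of slopes, then existence of a maximiser, then uniqueness via the exact sequence $0\to\cH_1\cap\cH_2\to\cH_1\oplus\cH_2\to\cH_1+\cH_2\to 0$) matches the paper's, and your uniqueness computation is essentially the one the paper uses. Two points of divergence, one of which is a genuine gap.

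On boundedness, you outsource to the Mehta--Ramanathan-type theory of \cite{GKP'}; the paper instead gives a one-line elementary argument: embed $\cG$ into $\oplus^N A$ with $A$ sufficiently ample, observe that a direct sum of copies of a single line bundle is trivially $\alpha$-semistable, and conclude $\mu_\alpha(\cG')\leq\mu_\alpha(A)$ for every $\cG'\subset\cG$. Your citation is legitimate, but it is much heavier machinery than the lemma requires, and it loses the self-contained character of the paper's proof.

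The gap is in Step (ii). You claim the supremum of slopes in each rank is attained ``using that rationals with bounded denominator and bounded above attain their sup''. This only works when $\alpha$ is a rational class. A general $\alpha\in Mov(X)$ is a real $(n-1,n-1)$-class, so the slopes $\det(\cH').\alpha/s$ need not be rational, and nothing you have established (bounded above, parametrised by countably many Quot-schemes) rules out the slopes accumulating at the supremum without attaining it. The paper explicitly isolates the non-rational case and resolves it with an elementary trick: among subsheaves with slope within $\varepsilon$ of the supremum $\mu$, take the maximal occurring rank $r$ and two distinct such subsheaves $\cG_1,\cG_2$ of rank $r$ differing from their intersection; the same exact-sequence computation you use for uniqueness shows $\cG_1+\cG_2$ has rank larger than $r$ and slope at least $\mu-2r\varepsilon$, contradicting the maximality of $r$ for $\varepsilon$ small. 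You have exactly the right tool in hand in Step (iii), but you do not deploy it where it is actually needed, namely to force attainment of the supremum.
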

 
 \begin{proof} Obvious if  $rank(\cG)=1$, and so for direct sum of copies of a line bundle. Then embedd $\cG$ in the direct sum of a certain number of copies of $A$, sufficiently ample. Then $\mu_{\alpha}(\cG')\leq \mu_{\alpha}(\oplus^NA)=\mu_{\alpha}(A)$ for any $\cG'\subset \cG$. This proves the boundedness of $\mu_{\alpha}(\cG')$, for $\cG'\subset \cG$. If the class $\alpha$ is rational, one easily gets the existence of a maximum for these slopes, because of the finiteness of the possible denominators, and one can choose the rank to be maximal. The conclusion then follows. If the class $\alpha$ is not rational, one needs a further (but still elementary) argument: choose $\cG_i,i=1,2$ both different from their intersection, of the same maximal rank $r$ with $\alpha$-slopes $\mu_i, i=1,2$, approaching  from below up to $\varepsilon>0$ the upper bound $\mu:=\mu_{\alpha,max}(\cG)$ of $\alpha$-slopes of all $\cG'\subset \cG$. The sum $\cG_1+\cG_2$ has rank larger than $r$ and slope at least each  $\mu-2r.\varepsilon$ (by a simple computation using the exact sequence $0\to \cG_1\cap \cG_2\to \cG_1\oplus \cG_2\to \cG_1+\cG_2\to 0$). Contradiction to the maximality of $r$ for the ranks of $\cG'\subset \cG$ with slope at least $\mu-2r.\varepsilon$ if $\varepsilon>0$ is chosen sufficiently small.  \end{proof}

 \begin{corollary}
 There are a unique integer $s\geq 0$, and an increasing filtration $\{0\}=\cH_0\subsetneq \cH_1\subsetneq \cH_2\subsetneq \dots\subsetneq \cH_s=\cG$ by saturated subsheaves such that: $\cH_{j+1}/\cH_{j}$ is $\alpha$-semistable for $j=0,\dots,s-1$, and $\mu_{\alpha}(\cH_{j+1}/\cH_{j})> \mu_{\alpha}(\cH_{j+2}/\cH_{j+1})$ for $j=0,\dots,s-2$ . 
 
 This filtration is called the $\alpha$-Harder-Narasimhan filtration of $\cG$. We write $\mu_{\alpha,min}(\cG):=\mu_{\alpha}(\cG_{\alpha,min}):=\mu_{\alpha}(\cG/\cH_{s-1})$, and $\mu_{\alpha,max}(\cG):=\mu_{\alpha}(G^{\alpha,max})$.\end{corollary}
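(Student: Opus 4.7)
The plan is to derive this corollary from the preceding lemma by induction on the rank $r=\mathrm{rk}(\cG)$, iterating the existence of a unique maximal $\alpha$-destabilizing subsheaf. The base case $r=1$ is trivial, since then $\cG$ is already semistable and the filtration $0\subsetneq \cG$ works with $s=1$.

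For the inductive step, I would set $\cH_1:=\cG^{\alpha,max}$, which is saturated, $\alpha$-semistable, and of slope $\mu_{\alpha,max}(\cG)$ by the previous lemma. The quotient $\cG':=\cG/\cH_1$ is torsionfree of rank strictly smaller than $r$ (or zero, in which case $s=1$ and we are done). By the induction hypothesis applied to $\cG'$, there is a unique filtration $0=\cH'_0\subsetneq \cH'_1\subsetneq\dots\subsetneq \cH'_{s-1}=\cG'$ with $\alpha$-semistable and strictly slope-decreasing successive quotients. Pulling back under the quotient map $\pi:\cG\to\cG'$ and setting $\cH_{j+1}:=\pi^{-1}(\cH'_j)$ for $j=0,\dots,s-1$ yields a filtration of $\cG$ whose graded pieces for $j\geq 1$ are isomorphic to $\cH'_j/\cH'_{j-1}$, and whose first graded piece is $\cH_1$. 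Saturation of each $\cH_{j+1}$ follows because $\cG/\cH_{j+1}\cong \cG'/\cH'_j$ is torsionfree by construction.

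The only slope inequality not yet granted by induction is $\mu_\alpha(\cH_1)>\mu_\alpha(\cH_2/\cH_1)$. Using the average identity $(*)$ from the text, if we had $\mu_\alpha(\cH_2/\cH_1)\geq \mu_\alpha(\cH_1)$, then $\mu_\alpha(\cH_2)\geq \mu_\alpha(\cH_1)=\mu_{\alpha,max}(\cG)$, forcing equality and, by the uniqueness clause in the lemma, $\cH_2\subset \cH_1$, contradicting $\cH_1\subsetneq \cH_2$. Hence the strict inequality holds.

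For uniqueness, suppose $0=\wt\cH_0\subsetneq\dots\subsetneq\wt\cH_t=\cG$ is another filtration with the stated properties. The key observation is that $\wt\cH_1$ must coincide with $\cG^{\alpha,max}$: indeed any subsheaf $\cK\subset\cG$ admits a filtration induced by intersecting with the $\wt\cH_j$, and using semistability of $\wt\cH_{j+1}/\wt\cH_j$ together with the strictly decreasing slopes one checks by a standard argument that $\mu_\alpha(\cK)\leq \mu_\alpha(\wt\cH_1)$, with equality only when $\cK\subset\wt\cH_1$; hence $\wt\cH_1$ satisfies the defining property of $\cG^{\alpha,max}$ and must equal it. Passing to the quotient by this common first term reduces uniqueness to the induction hypothesis. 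The main obstacle is this last verification that $\wt\cH_1=\cG^{\alpha,max}$, for which the strict decrease of slopes in the filtration is essential.
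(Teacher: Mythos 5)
Your proposal is correct and follows essentially the same route as the paper, which proves the corollary by the one-line induction "applied to $\cG/\cG^{\alpha,max}$ with $\cH_1=\cG^{\alpha,max}$"; you have simply filled in the standard details (the averaging identity $(*)$ for the strict slope drop, and the identification $\wt\cH_1=\cG^{\alpha,max}$ for uniqueness). No gaps.
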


 \begin{proof} Induction on $s$ applied to $\cG/\cG^{\alpha,max}$ and $\cH_1=\cG^{\alpha,max}$. \end{proof}

 \begin{lemma}\label{slope} 1. Let $\cH\neq \{0\}$ be a torsionfree quotient sheaf of $\cG$ on $X$. Then: $\mu_{\alpha}(\cH)\geq \mu_{\alpha,min}(\cG)$. If equality holds, $\cH$ is a quotient of $\cG_{min}$.
 
 2. Let $\cF,\cG$ be torsionfree coherent sheaves on $X$. Assume that $\mu_{\alpha,min}(\cF)>\mu_{\alpha,max}(\cG)$. Then $Hom(\cF,\cG)=\{0\}$.
 
 3. If $det(\cG).\alpha>0$, then $\mu_{\alpha,max}(\cG)=\mu_{\alpha,min}(\cG^{\alpha,max})>0$.

 \end{lemma}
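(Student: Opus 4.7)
The plan is to dispatch the three parts in order; (1) is the substantial step, and (2)--(3) are short consequences.

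For (1), I proceed by induction on the length $s$ of the $\alpha$-Harder--Narasimhan filtration of $\cG$. The base case $s=1$ is $\cG$ being $\alpha$-semistable: for any torsionfree quotient $\cH$, the kernel $\cK$ has $\mu_\alpha(\cK)\leq \mu_\alpha(\cG)$, hence the averaging identity $(*)$ applied to $0\to \cK\to \cG\to \cH\to 0$ forces $\mu_\alpha(\cH)\geq \mu_\alpha(\cG)=\mu_{\alpha,min}(\cG)$. For the inductive step I use the exact sequence $0\to \cH_{s-1}\to \cG\to \cG_{\alpha,min}\to 0$: given a torsionfree quotient $\pi\colon \cG\to \cH$, the image $\pi(\cH_{s-1})\subset \cH$ is automatically torsionfree, hence a torsionfree quotient of $\cH_{s-1}$. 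If it is nonzero, induction gives $\mu_\alpha(\pi(\cH_{s-1}))\geq \mu_{\alpha,min}(\cH_{s-1})=\mu_{s-2}>\mu_{s-1}$; simultaneously $\cH/\pi(\cH_{s-1})$ is a torsionfree quotient of the $\alpha$-semistable $\cG_{\alpha,min}$, so its slope is $\geq \mu_{s-1}$ by the base case. Averaging via $(*)$ produces $\mu_\alpha(\cH)\geq \mu_{s-1}=\mu_{\alpha,min}(\cG)$, and shows moreover that the strict inequality $\mu_{s-2}>\mu_{s-1}$ forces $\mu_\alpha(\cH)>\mu_{s-1}$ whenever $\pi(\cH_{s-1})\neq 0$. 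Therefore equality forces $\cH_{s-1}\subset \ker \pi$, so $\pi$ factors through $\cG_{\alpha,min}$, proving the second clause.

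Part (2) is essentially a reformulation of (1). Given a nonzero $\phi\in Hom(\cF,\cG)$, the image $\cI:=\phi(\cF)$ is a subsheaf of the torsionfree $\cG$ and is thus itself torsionfree, while being a nonzero torsionfree quotient of $\cF$. Part (1) then yields $\mu_\alpha(\cI)\geq \mu_{\alpha,min}(\cF)$, and the subsheaf inclusion $\cI\subset \cG$ gives $\mu_\alpha(\cI)\leq \mu_{\alpha,max}(\cG)$, contradicting the hypothesis $\mu_{\alpha,min}(\cF)>\mu_{\alpha,max}(\cG)$. For part (3), the hypothesis $det(\cG)\cdot \alpha>0$ is exactly $\mu_\alpha(\cG)>0$; since $\cG$ is a subsheaf of itself one has $\mu_{\alpha,max}(\cG)\geq \mu_\alpha(\cG)>0$, and the $\alpha$-semistability of $\cG^{\alpha,max}$ gives the chain of equalities $\mu_{\alpha,max}(\cG)=\mu_\alpha(\cG^{\alpha,max})=\mu_{\alpha,min}(\cG^{\alpha,max})$.

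The main technical nuisance I expect is keeping torsion under control when applying the determinantal identity $(*)$ to short exact sequences of sheaves that are not all torsionfree; for instance, $\cH/\pi(\cH_{s-1})$ is a priori merely a quotient of the torsionfree $\cG_{\alpha,min}$ and may have torsion. The cleanest fix is to pass to its torsionfree quotient by the torsion subsheaf: since $\alpha\in Mov(X)$ pairs non-negatively with any effective divisor, modding out the torsion can only decrease the slope, so the inequalities persist under this reduction and the induction closes.
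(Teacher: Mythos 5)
Your proof is correct and follows essentially the same strategy as the paper: part (1) by induction along the Harder--Narasimhan filtration, splitting a torsionfree quotient into a piece coming from a subobject and a piece coming from a semistable quotient and then averaging slopes via $(*)$, with (2) and (3) as the same formal consequences. The only cosmetic difference is that you peel off $\cG_{\alpha,\min}$ from the bottom of the filtration (inducting on its length $s$) while the paper peels off $\cG^{\alpha,\max}$ from the top (inducting on the rank); your explicit handling of the equality case and of torsion in the intermediate quotient fills in details the paper leaves implicit.
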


 \begin{proof} 1. Induction on $r$: if $\cH$ is a nonzero quotient of $\cG$, it fits in a short exact sequence $0 \to K \to \cH \to \cQ \to 0$, in which $K$ (resp. $\cQ)$ is a quotient of $\cG^{\alpha,max}$ (resp. $\cG/\cG^{\alpha,max})$, hence the conclusion since both $K$ and $\cQ$ have slope at least $\mu_{\alpha,min}(\cG)$. The second claim holds by induction on $s$, because $\mu_{\alpha,min} (\cG/\cG^{\alpha,max})>\mu_{\alpha,min}(\cG)$ if $\cG$ is not semi-stable.

 2. If $0\neq h\in Hom(\cF,\cG)$, its image $\cH$ a subsheaf of $\cG$, and a quotient of $\cF$. We thus get (with $\mu=\mu_{\alpha})$: $\mu_{max}(\cG)\geq \mu(\cH)\geq \mu_{min}(\cF)$, a contradiction. 
 
 3. is obvious.
 \end{proof}

 \begin{lemma} We have: $(\cG^*)^{\alpha,max}=(\cG_{\alpha, min})^*$, in particular: $\mu_{\alpha,min}(\cG)=-\mu_{\alpha,max}(\cG^*)$, and $HN_{\alpha}(\cG^*)=(HN_{\alpha}(\cG))^*$ (i.e: the terms for $\cG^*$ are the dual of those for $\cG$ in reverse order).
 \end{lemma}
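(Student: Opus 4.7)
The plan is to reduce the statement to two ingredients: (i) $\alpha$-semistability is preserved under the functor $\cF\mapsto \cF^*$, with slopes negated, and (ii) dualizing a short exact sequence of locally free sheaves reverses inclusions. Since slopes are computed from determinants, they are unchanged by restriction to a big Zariski open subset; in particular, we may freely work on the big open $U\subset X$ on which both $\cG$ and $\cG^*$ are locally free, since saturated subsheaves of a torsionfree sheaf are determined by their restriction to a big open by saturation and the fact that $\det$ is a line bundle on the smooth $X$.

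For (i): the slope identity $\mu_{\alpha}(\cF^*)=-\mu_{\alpha}(\cF)$ for torsionfree $\cF$ is already recorded in the text. Suppose $\cF$ is $\alpha$-semistable and $\cH\subsetneq \cF^*$ is saturated of positive rank with $\mu_{\alpha}(\cH)>\mu_{\alpha}(\cF^*)$; the slope relation $(*)$ then forces $\mu_{\alpha}(\cF^*/\cH)<\mu_{\alpha}(\cF^*)=-\mu_{\alpha}(\cF)$. Dualizing the short exact sequence $0\to\cH\to \cF^*\to \cF^*/\cH\to 0$ on $U$ produces an injection $(\cF^*/\cH)^*|_U\hookrightarrow \cF|_U$ of slope $-\mu_{\alpha}(\cF^*/\cH)>\mu_{\alpha}(\cF)$; its saturation in $\cF$ destabilises $\cF$, a contradiction. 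Hence $\cF^*$ is semistable of slope $-\mu_{\alpha}(\cF)$.

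For (ii): let $0=\cH_0\subsetneq\cH_1\subsetneq\cdots\subsetneq\cH_s=\cG$ be the $\alpha$-HN filtration of $\cG$, with semistable graded pieces $\cG_j:=\cH_{j+1}/\cH_j$ of strictly decreasing slopes $\mu_0>\mu_1>\cdots>\mu_{s-1}$. On $U$ all sheaves involved are locally free and the short exact sequences $0\to \cH_j\to \cG\to \cG/\cH_j\to 0$ dualize to $0\to (\cG/\cH_j)^*\to \cG^*\to \cH_j^*\to 0$, yielding a chain
\[
0=(\cG/\cH_s)^*\subsetneq(\cG/\cH_{s-1})^*\subsetneq\cdots\subsetneq(\cG/\cH_0)^*=\cG^*
\]
of saturated subsheaves of $\cG^*$ whose successive quotients are identified, on $U$ and hence up to codimension two, with $\cG_{s-1}^*,\cG_{s-2}^*,\dots,\cG_0^*$. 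By (i) these quotients are $\alpha$-semistable with slopes $-\mu_{s-1}>-\mu_{s-2}>\cdots>-\mu_0$, which are strictly decreasing. By the uniqueness of the HN filtration this is $HN_{\alpha}(\cG^*)$; in particular the first nonzero term $(\cG^*)^{\alpha,max}$ equals $(\cG/\cH_{s-1})^*=(\cG_{\alpha,min})^*$, and taking slopes gives $\mu_{\alpha,max}(\cG^*)=-\mu_{\alpha,min}(\cG)$. The only real obstacle is the bookkeeping needed to check that the correspondence of saturated subsheaves under dualisation on $U$ extends uniquely to $X$; this is routine once one observes that slopes and the determinant functor only see divisorial, hence codimension-one, information.
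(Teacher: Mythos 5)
Your proof is correct, but it takes a genuinely different route from the paper's. The paper never proves that duals of $\alpha$-semistable sheaves are $\alpha$-semistable; instead it establishes the numerical identity $\mu_{\alpha,\min}(\cG)=-\mu_{\alpha,\max}(\cG^*)$ by a two-sided inequality: dualising the projection $\cG\to\cG_{\alpha,\min}$ gives the subsheaf $(\cG_{\alpha,\min})^*\subset\cG^*$, hence $\mu_{\alpha,\max}(\cG^*)\geq-\mu_{\alpha,\min}(\cG)$, while dualising $(\cG^*)^{\alpha,\max}\subset\cG^*$ gives a generically surjective map $\cG\to((\cG^*)^{\alpha,\max})^*$, hence the reverse inequality via Lemma \ref{slope}.(1). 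The sheaf-level identity $(\cG^*)^{\alpha,\max}=(\cG_{\alpha,\min})^*$ is then extracted from the equality case of that same lemma, and the full statement on $HN_\alpha(\cG^*)$ by induction on the length of the filtration. You instead prove semistability of duals directly (your step (i), which is a clean and self-contained argument via the relation $(*)$ and saturation), dualise the entire filtration at once, and appeal to the uniqueness/characterisation of the Harder--Narasimhan filtration as the unique filtration with semistable graded pieces of strictly decreasing slopes. Your route buys the full filtration statement in one stroke without induction, at the cost of needing that characterisation explicitly (the paper's corollary asserts uniqueness, so this is available) and of the codimension-two bookkeeping you flag at the end, which is indeed routine since slopes, semistability and saturations are all detected on a big open subset. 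The paper's route is shorter for the slope equality alone and leans on the already-proved equality case of the quotient-slope lemma rather than on a new semistability statement. Both are standard and valid.
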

 
 \begin{proof} Dualising the projection $\cG\to \cG_{min}$, we get: $(\cG_{\min})^* \subset \cG^*$, and so $-\mu_{min}(\cG)\leq \mu_{max}(\cG^*)$. Dualising $(\cG^*)^{\alpha,max}\subset \cG^*$, we get a generically surjective morphism $\cG \to \cG^{**} \to ((\cG^*)^{\alpha,max})^*$ from which the inequality $-\mu_{max}(\cG^*)\geq \mu_{min}(\cG)$ follows. We thus get the equality $-\mu_{max}(\cG^*)= \mu_{min}(\cG)$. Hence the second claim. The first claim follows from the second part of Lemma \ref{slope}.(1). The last claim is seen by induction on the number of terms of the HN filtration. \end{proof}
 
\medskip

Let $\cF\hat{\otimes}\cG:=(\cF\otimes \cG)^{**}$ be the reflexive tensor product of $\cF$ and $\cG$.
 
 \begin{theorem}\label{mu} For $X,\cG,\cH,\alpha$ as before, we have:
 
 1.  $\mu_{\alpha,min}(\cG)=-\mu_{\alpha,max}(\cG^*)$, $(\cG_{\alpha,min})^*=(\cG^*)^{\alpha,max}$. 
 
 2. $(\cG\hat{\otimes} \cH)^{\alpha,max}=\cG^{\alpha,max}\hat{\otimes} \cH^{\alpha,max}/$.
 
 3. $\mu_{\alpha,max}(\cG\hat{\otimes} \cH)=\mu_{\alpha,max}(\cG)+\mu_{\alpha,max}(\cH)$.
 
 4.$ \mu_{\alpha,max}(\hat{\otimes}^m(\cG))=\mu_{\alpha,max}(\widehat{Sym}^m(\cG))=m. \mu_{\alpha,max}(\cG)$.
 
 5.  $\mu_{\alpha,max}(\hat{\wedge}^p(\cG))=p. \mu_{\alpha,max}(\cG),\forall p>0$.
 \end{theorem}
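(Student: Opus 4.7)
The plan is as follows. Part (1) has already been established in the preceding lemma, so I treat it as given and concentrate on (2)--(5). The decisive ingredient I will invoke is the classical fact, valid in characteristic zero, that the reflexive tensor product of two $\alpha$-semistable torsion-free sheaves is again $\alpha$-semistable, with slope equal to the sum of the slopes. This is standard for $\alpha = [A]^{n-1}$ with $A$ ample (via Mehta--Ramanathan restriction plus the classical Ramanan--Ramanathan / Donaldson--Uhlenbeck--Yau argument) and extends to arbitrary $\alpha \in \mathrm{Mov}(X)$ using the framework of \cite{GKP'}; I would simply cite it. Throughout, since $\cG$, $\cH$ and their reflexive tensor/symmetric/exterior products are all locally free on a big Zariski open set $U \subset X$, and since both $\mu_\alpha$ and the Harder--Narasimhan filtration can be read off on such a $U$, I may work with the honest (locally free) tensor constructions on $U$ and then take reflexive hulls.

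For (2) and (3), I take the Harder--Narasimhan filtrations $\{0\} = \cG_0 \subsetneq \cdots \subsetneq \cG_a = \cG$ and $\{0\} = \cH_0 \subsetneq \cdots \subsetneq \cH_b = \cH$ with semistable graded pieces of slopes $\nu_1 > \cdots > \nu_a$ and $\eta_1 > \cdots > \eta_b$. Using the indicated semistability theorem, the reflexive tensor product $\cG \hat{\otimes} \cH$ inherits a filtration (by saturations of tensor products of filtration terms) whose graded pieces are reflexive tensor products of semistable sheaves, and hence are themselves $\alpha$-semistable of slope $\nu_i + \eta_j$. The unique maximum among these sums is $\nu_1 + \eta_1$, attained only at $(i,j)=(1,1)$, which proves (3) and identifies the maximal destabilising subsheaf as $\cG^{\alpha,\max} \hat{\otimes} \cH^{\alpha,\max}$, giving (2).

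For (4), iterating (3) immediately yields $\mu_{\alpha,\max}(\hat{\otimes}^m \cG) = m\,\mu_{\alpha,\max}(\cG)$. Since we are in characteristic zero, $\widehat{\Sym}^m(\cG)$ and $\hat{\wedge}^p(\cG)$ are (on the big open set $U$) direct summands of $\hat{\otimes}^m(\cG)$ and $\hat{\otimes}^p(\cG)$ respectively, via the idempotents $\tfrac1{m!}\sum_\sigma \sigma$ and $\tfrac1{p!}\sum_\sigma \mathrm{sgn}(\sigma)\sigma$; this gives the inequalities $\mu_{\alpha,\max}(\widehat{\Sym}^m \cG) \le m\,\mu_{\alpha,\max}(\cG)$ and $\mu_{\alpha,\max}(\hat{\wedge}^p \cG) \le p\,\mu_{\alpha,\max}(\cG)$. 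For the reverse inequalities I use the natural saturated injections $\widehat{\Sym}^m(\cG^{\alpha,\max}) \hookrightarrow \widehat{\Sym}^m(\cG)$ and $\hat{\wedge}^p(\cG^{\alpha,\max}) \hookrightarrow \hat{\wedge}^p(\cG)$, together with the fact that $\Sym$ and exterior powers of a semistable sheaf are themselves semistable (again by the same characteristic-zero theorem, or by taking invariants/anti-invariants of $\hat{\otimes}^m$ in its Harder--Narasimhan pieces), with slope $m\mu$ and $p\mu$ respectively.

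The main obstacle is really a single external input, namely the semistability of tensor products of semistable sheaves with respect to a movable class; everything else is bookkeeping with Harder--Narasimhan filtrations, short exact sequences, and the additivity $\mu_\alpha(\cE_1 \oplus \cE_2) = $ weighted average from $(*)$. A secondary technical point worth being careful about is the passage between $\otimes$ and its reflexive hull $\hat{\otimes}$: this is harmless because both the slope with respect to $\alpha$ and the maximal destabilising subsheaf depend only on the sheaf on the complement of a codimension-$2$ subset, where reflexive hulls agree with ordinary tensor products of the locally free restrictions of $\cG$ and $\cH$.
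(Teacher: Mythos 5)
Your proposal is correct and follows essentially the same route as the paper: the paper gives no detailed proof either, but isolates the one non-formal input --- the $\alpha$-semistability, with additive slopes, of (reflexive) tensor products of $\alpha$-semistable sheaves, obtained from the Kobayashi--Hitchin correspondence for Gauduchon metrics (\cite{LY}, \cite{T}; for movable classes see \cite{GKP'}) --- which is exactly the external theorem you invoke, and your Harder--Narasimhan bookkeeping for (2)--(4) together with the characteristic-zero direct-summand trick is the standard way to deduce the rest. One caveat concerning (5): your reverse inequality relies on the injection $\hat{\wedge}^p(\cG^{\alpha,\max})\hookrightarrow\hat{\wedge}^p(\cG)$, which is the zero map once $p>rk(\cG^{\alpha,\max})$, and indeed the stated equality fails in that range (take $\cG=L_1\oplus L_2$ with $\deg_\alpha L_1>\deg_\alpha L_2$ and $p=2$); what holds for all $p$ is only the inequality $\mu_{\alpha,\max}(\hat{\wedge}^p\cG)\le p\,\mu_{\alpha,\max}(\cG)$, equivalently $\mu_{\alpha,\min}(\hat{\wedge}^p\cG)\ge p\,\mu_{\alpha,\min}(\cG)$, which is the form actually used in Corollary \ref{miy} --- so this is a defect of the statement as printed rather than of your argument, but you should state the needed hypothesis $p\le rk(\cG^{\alpha,\max})$ if you want the equality.
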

 
 The first claim follows from the fact that dualisation exchanges subobjects and quotients. By contrast, the proof of claim 3 is quite deep, relying on the Kobayashi-Hitchin correspondance between $\alpha$-stable bundles and Hermite-Einstein vector bundles (with respect to Gauduchon metrics). For this correspondance, originally due to Li-Yau in \cite{LY}, see details in  the book of L\"ubke-Teleman \cite{T}.

\begin{lemma}\label{notpseff} Let $\cF$ be a torsionfree coherent sheaf on $X$, and $\alpha\in Mov(X)$ such that $\mu_{\alpha,min}(\cF)>0$. Then $\cF^*$ is not pseudo-effective.
\end{lemma}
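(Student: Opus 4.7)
The plan is to translate the slope hypothesis into an a priori bound on the ratio $m/j$ for which $H^0(X,\mathrm{Sym}^{[m]}(\cF^*)\otimes A^j)$ can be nonzero, and then invoke the definition of non pseudo-effectivity directly.

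First I would compute $\mu_{\alpha,\max}$ of all reflexive symmetric powers of $\cF^*$. By Theorem~\ref{mu}(1), the assumption $\mu_{\alpha,\min}(\cF)>0$ is equivalent to $\mu_{\alpha,\max}(\cF^*)=-\mu_{\alpha,\min}(\cF)<0$. By Theorem~\ref{mu}(4), this gives
\[
\mu_{\alpha,\max}\bigl(\widehat{\mathrm{Sym}}^m(\cF^*)\bigr)=m\cdot\mu_{\alpha,\max}(\cF^*)=-m\cdot\mu_{\alpha,\min}(\cF),
\]
a quantity which is negative and tends to $-\infty$ linearly in $m$.

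Next I would argue by contradiction: suppose that for some $j>0$ and some $m$ there exists a nonzero section $s\in H^0(X,\mathrm{Sym}^{[m]}(\cF^*)\otimes A^j)$. Since the reflexive sheaf $\mathrm{Sym}^{[m]}(\cF^*)\otimes A^j$ is torsion-free, $s$ induces an injection $\cO_X\hookrightarrow\mathrm{Sym}^{[m]}(\cF^*)\otimes A^j$, equivalently an injection of sheaves
\[
A^{-j}\hookrightarrow \mathrm{Sym}^{[m]}(\cF^*).
\]
Let $\cL$ be the saturation of the image inside $\mathrm{Sym}^{[m]}(\cF^*)$; then $\cL$ is a rank-one saturated subsheaf of the form $A^{-j}\otimes\cO_X(D)$ with $D\geq 0$ effective, so $\mu_\alpha(\cL)=-j\,(A\cdot\alpha)+D\cdot\alpha\geq -j\,(A\cdot\alpha)$ because $\alpha\in\mathrm{Mov}(X)$ pairs non-negatively with every effective divisor. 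Comparing with the maximal slope yields
\[
-j\,(A\cdot\alpha)\;\leq\;\mu_\alpha(\cL)\;\leq\;\mu_{\alpha,\max}\bigl(\mathrm{Sym}^{[m]}(\cF^*)\bigr)\;=\;-m\cdot\mu_{\alpha,\min}(\cF),
\]
hence
\[
m\;\leq\;\frac{A\cdot\alpha}{\mu_{\alpha,\min}(\cF)}\cdot j.
\]

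Finally, setting $c(A):=\dfrac{A\cdot\alpha}{\mu_{\alpha,\min}(\cF)}$ (which is a positive finite constant thanks to the hypothesis), the inequality above shows that $H^0(X,\mathrm{Sym}^{[m]}(\cF^*)\otimes A^j)=\{0\}$ as soon as $m>c(A)\cdot j$. By the definition of non pseudo-effectivity this is exactly what had to be proved. The only slightly delicate step is the slope multiplicativity $\mu_{\alpha,\max}(\widehat{\mathrm{Sym}}^m(\cF^*))=m\cdot\mu_{\alpha,\max}(\cF^*)$, but this is granted by Theorem~\ref{mu}(4); apart from this, the proof is a direct slope comparison.
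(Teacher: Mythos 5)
Your proof is correct and follows essentially the same route as the paper: both reduce to the slope computation $\mu_{\alpha,\max}(\mathrm{Sym}^{[m]}(\cF^*)\otimes A^j)=-m\,\mu_{\alpha,\min}(\cF)+j\,A\cdot\alpha<0$ for $m>j\cdot A\cdot\alpha/\mu_{\alpha,\min}(\cF)$ via Theorem~\ref{mu}(1) and (4). The only difference is that you make explicit the step the paper leaves implicit, namely that a nonzero section would produce a rank-one subsheaf of non-negative twisted slope, contradicting the negativity of the maximal slope.
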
 

\begin{proof} Let $A$ be ample on $X$.  Applying Theorem \ref{mu}, we see that $-\mu:=\mu_{\alpha,max}(\cF^*)<0$, and so $\mu_{\alpha,max}(Sym^{[m]}(\cF)\otimes A^j)=m. \mu+j.A.\alpha<0$ for $m>j.c$, where $c:=\frac{A.\alpha}{\mu}$, which implies that $h^0(X,Sym^{[m]}(\cF)\otimes A^j)=0$, as claimed.
\end{proof}

The following  criterion for foliations among distributions is crucial here:

 \begin{corollary}\label{miy} (\cite{Mi}) Let $\cF\subset TX$ be a coherent subsheaf. If, for some $\alpha\in Mov(X)$, $2.\mu_{\alpha,min}(\cF)>\mu_{\alpha,max}(TX/\cF)$, then $\cF$ is a foliation. 
 \end{corollary}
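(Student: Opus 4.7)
The idea is to reduce integrability---vanishing of the $\cO_X$-linear Lie bracket morphism $L\colon \wedge^2\cF \to TX/\cF$---to a slope-theoretic Hom-vanishing supplied by Lemma \ref{slope}.(2). Since $\cF$ is saturated, $TX/\cF$ is torsion-free, and $L$ factors through the reflexive hull $\hat{\wedge}^2\cF$ (both sheaves agreeing in codimension one, with torsion-free target). It therefore suffices to establish the single inequality
$$\mu_{\alpha,min}(\hat{\wedge}^2\cF) > \mu_{\alpha,max}(TX/\cF),$$
after which Lemma \ref{slope}.(2) forces $L=0$ and $\cF$ is a foliation by definition.

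The heart of the argument is the slope identity $\mu_{\alpha,min}(\hat{\wedge}^2\cF) = 2\,\mu_{\alpha,min}(\cF)$. I would assemble it in four quick steps using the machinery already packaged in Theorem \ref{mu}: first, the duality $\mu_{\alpha,min}(\hat{\wedge}^2\cF) = -\mu_{\alpha,max}((\hat{\wedge}^2\cF)^*)$ coming from Theorem \ref{mu}.(1); second, the reflexive identification $(\hat{\wedge}^2\cF)^* = \hat{\wedge}^2(\cF^*)$, verified on the big open locus where $\cF$ is locally free and propagated to all of $X$ by reflexivity; third, the multiplicativity $\mu_{\alpha,max}(\hat{\wedge}^2\cF^*) = 2\,\mu_{\alpha,max}(\cF^*)$ granted by Theorem \ref{mu}.(5); and fourth, one more application of Theorem \ref{mu}.(1) to convert $-\mu_{\alpha,max}(\cF^*)$ into $\mu_{\alpha,min}(\cF)$. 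Chaining these, the hypothesis $2\mu_{\alpha,min}(\cF) > \mu_{\alpha,max}(TX/\cF)$ becomes precisely the slope gap needed above.

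The only point to take genuine care with is the reflexive identification $(\hat{\wedge}^2\cF)^* = \hat{\wedge}^2(\cF^*)$: it is transparent on the locally free locus of $\cF$---a big Zariski-open set---and extends uniquely since both sides are reflexive, so determined by their values on any big open subset. Everything else is a formal composition of slope inequalities, and the genuine content lives in the prior statements invoked, in particular the deep Kobayashi-Hitchin-based additivity of the maximal slope under tensor product in Theorem \ref{mu}.(3), which underpins Theorem \ref{mu}.(5).
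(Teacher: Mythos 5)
Your proposal is correct and follows essentially the same route as the paper, whose proof is just the two-line observation that claim 5 of Theorem \ref{mu} together with claim 2 of Lemma \ref{slope} forces the Lie-bracket morphism $\wedge^2\cF\to TX/\cF$ to vanish; you have merely filled in the implicit steps (passing to the reflexive hull, and the duality $\mu_{\alpha,min}=-\mu_{\alpha,max}(\cdot^*)$ needed to turn the max-slope multiplicativity of $\hat{\wedge}^2$ into the min-slope identity $\mu_{\alpha,min}(\hat{\wedge}^2\cF)=2\mu_{\alpha,min}(\cF)$). No gaps.
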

 
 \begin{proof} Let $\Lambda: \wedge^2(\cF)\to TX/\cF$ be the sheaf morphism induced by the Lie bracket. The slopes assumption implies by claim 5 of Theorem \ref{mu} and claim 2 of Lemma \ref{slope} that it vanishes. The conclusion follows by Frobenius theorem.
 \end{proof}
 
 From Theorem \ref{algcrit}, and Lemmas \ref{notpseff} and \ref{miy}, we get:

 \begin{theorem}\label{mu-alg} Let $\cF\subset TX$ be a distribution with $\mu_{\alpha,min}(\cF)>0$ for some $\alpha\in Mov(X)$. 
 Assume that $2.\mu_{\alpha,min}(\cF)\>\mu_{\alpha,max}(TX/\cF)$ (this is satisfied if $\cF$ is a piece of the Harder-Narasimhan filtration of $TX$ relative to $\alpha$).
 Then $\cF$ is an algebraic foliation.
  \end{theorem}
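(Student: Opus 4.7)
The plan is to assemble this theorem directly from three previously established tools, with essentially no new content required.

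First, I would verify Frobenius integrability. The hypothesis $2\mu_{\alpha,min}(\cF) > \mu_{\alpha,max}(TX/\cF)$ is exactly the assumption of Corollary \ref{miy}, so the Lie bracket morphism $\wedge^2\cF \to TX/\cF$ vanishes and $\cF$ is a foliation in the sense of \S1.

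Second, I would apply Lemma \ref{notpseff} to the hypothesis $\mu_{\alpha,min}(\cF) > 0$ to conclude that $\cF^*$ is not pseudo-effective. Theorem \ref{algcrit} then immediately gives that the foliation $\cF$ is algebraic.

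It remains to verify the parenthetical claim. If $\cF = \cH_j$ is a term of the Harder--Narasimhan filtration $\{0\} = \cH_0 \subsetneq \cH_1 \subsetneq \cdots \subsetneq \cH_s = TX$ of $TX$ relative to $\alpha$, then by definition $\mu_{\alpha,min}(\cF) = \mu_\alpha(\cH_j/\cH_{j-1})$ and $\mu_{\alpha,max}(TX/\cF) = \mu_\alpha(\cH_{j+1}/\cH_j)$. The strict decrease of slopes in the HN filtration yields $\mu_{\alpha,min}(\cF) > \mu_{\alpha,max}(TX/\cF)$, and combined with $\mu_{\alpha,min}(\cF) > 0$ this gives the required inequality $2\mu_{\alpha,min}(\cF) > \mu_{\alpha,max}(TX/\cF)$.

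There is no real obstacle in writing this proof: the substance lies entirely in the earlier results being invoked, most notably the Kobayashi--Hitchin-based tensor product identity (Theorem \ref{mu}(3) and (5)) that powers Corollary \ref{miy}, together with the main algebraicity criterion Theorem \ref{algcrit} itself. The present statement is essentially a packaging of these ingredients in a slope-theoretic form that is convenient for the applications in the following sections.
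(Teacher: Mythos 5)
Your proof is correct and follows exactly the paper's own route: the theorem is stated there as an immediate consequence of Corollary \ref{miy} (Frobenius integrability from the slope inequality), Lemma \ref{notpseff} (non-pseudo-effectivity of $\cF^*$ from $\mu_{\alpha,min}(\cF)>0$), and Theorem \ref{algcrit}. Your verification of the parenthetical Harder--Narasimhan claim correctly fills in a detail the paper leaves implicit.
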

 
 \begin{remark} In the situation of Theorem \ref{mu-alg}, we shall prove in the next two sections that the leaves of $\cF$ have rationally connected closures. 
 \end{remark}

 \begin{corollary}\label{mumin>0} Assume that $K_X$ is not pseudo-effective. There then exists on $X$ a nonzero foliation $\cF\subset TX$ such that $\mu_{\alpha,min}(\cF)>0$, the dual $\cF^*$ is not pseudo-effective, and $\cF$ is algebraic. \end{corollary}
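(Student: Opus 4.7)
The plan is to take $\cF$ to be the first (maximal destabilising) term of the Harder--Narasimhan filtration of $TX$ with respect to a suitably chosen movable class $\alpha$. Everything should follow by combining the pseudo-effectivity criterion of Theorem \ref{pseff} with the slope machinery of \S\ref{sslope} and Theorem \ref{mu-alg}.

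First, I would produce the class $\alpha$. Since $K_X$ is not pseudo-effective, Theorem \ref{pseff} yields some $\alpha \in Mov(X)$ with $K_X \cdot \alpha < 0$, equivalently $\det(TX) \cdot \alpha = -K_X \cdot \alpha > 0$. Applying Lemma \ref{slope}(3) to $\cG = TX$ then gives $\mu_{\alpha,\max}(TX) > 0$.

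Next, I would set $\cF := (TX)^{\alpha,\max} = \cH_1$, the first piece of the $\alpha$-Harder--Narasimhan filtration of $TX$. By construction $\cF$ is a nonzero saturated subsheaf, it is $\alpha$-semistable, and
\[
\mu_{\alpha,\min}(\cF) \;=\; \mu_{\alpha}(\cF) \;=\; \mu_{\alpha,\max}(TX) \;>\; 0.
\]
The strict decrease of slopes along the HN filtration gives $\mu_{\alpha,\max}(TX/\cF) = \mu_\alpha(\cH_2/\cH_1) < \mu_{\alpha,\min}(\cF)$; combined with $\mu_{\alpha,\min}(\cF)>0$, this implies the inequality
\[
2\,\mu_{\alpha,\min}(\cF) \;>\; \mu_{\alpha,\max}(TX/\cF),
\]
which is exactly the hypothesis of Theorem \ref{mu-alg}. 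That theorem then delivers that $\cF$ is an algebraic foliation, and Lemma \ref{notpseff} gives that $\cF^*$ is not pseudo-effective.

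There is really no serious obstacle here: the corollary is a direct assembly of the earlier results. The one point to verify carefully is simply the elementary implication ``$a > b$ and $a > 0 \Rightarrow 2a > b$'' needed to upgrade the HN slope gap into the Miyaoka-type condition of Theorem \ref{mu-alg}; all other steps are immediate from the cited statements.
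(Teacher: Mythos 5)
Your proposal is correct and takes essentially the same route as the paper: the paper's proof is the one-liner ``apply Theorem \ref{mu-alg} with $\cF:=TX^{\alpha,max}$,'' and you have simply filled in the supporting details (Theorem \ref{pseff} to produce $\alpha$, Lemma \ref{slope}(3) for $\mu_{\alpha,max}(TX)>0$, the HN slope gap for the Miyaoka-type inequality, and Lemma \ref{notpseff} for the non-pseudo-effectivity of $\cF^*$). The only cosmetic point is the degenerate case $\cF=TX$ (when $TX$ is $\alpha$-semistable), where $TX/\cF=0$ and the inequality is vacuous; this does not affect the argument.
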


 \begin{proof} Just apply Theorem \ref{mu-alg}, choosing $\cF:= TX^{\alpha,max}$. The last claim requires new notions and techniques presented in the next two sections. \end{proof} 
 
 \begin{remark} We thus obtain the existence of (all of the) fibrations with rationally connected fibres on any given $X$ with $K_X$ not pseudo-effective without using the Minimal Model Program. 
  \end{remark}
 Another easy, but central, property is the birational invariance `up' of the condition $\mu_{\alpha,min}(\cF)>0$ for subsheaves of the tangent bundles:
 
 \begin{lemma}\label{birup} Let $\cF\subset TX$ be a subsheaf, let $\pi:X'\to X$ be a birational morphism between $X,X'$ projective smooth. Let $\cF'\subset TX'$ be the pullback distribution, defined as: $\cF':=\pi^*(\cF)\cap TX'$. Let $\alpha\in Mov(X),\alpha':=\pi^*(\alpha)$. Then $\mu_{\alpha'}(\cF')=\mu_{\alpha}(\cF)$, and $\mu_{\alpha',min}(\cF')=\mu_{\alpha,min}(\cF)$. \end{lemma}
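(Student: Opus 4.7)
The plan is to exploit that $\pi$ is an isomorphism over a big Zariski open of $X$, so invariants controlled by generic behaviour — rank, and determinant up to a $\pi$-exceptional divisor — transfer cleanly between $X$ and $X'$, while $\pi$-exceptional divisors are killed by intersection with $\pi^*(\alpha)$. First I establish $\mu_{\alpha'}(\cF')=\mu_{\alpha}(\cF)$, and then deduce the minimum-slope equality by transporting torsionfree quotients across $\pi$ via the isomorphism of big opens.

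Let $Z\subset X$ be the codimension $\geq 2$ closed locus above which $\pi$ fails to be an isomorphism (standard for birational morphisms of smooth projective varieties), $U:=X\setminus Z$, $U':=\pi^{-1}(U)$. By the definition $\cF':=\pi^*(\cF)\cap TX'$ we have $\cF'|_{U'}\cong \cF|_U$ under $\pi|_{U'}$; hence the ranks agree, and the line bundles $\det(\cF')$ and $\pi^*\det(\cF)$ agree on $U'$, so they differ by a $\pi$-exceptional divisor $E$ on $X'$. The projection formula yields $E\cdot\pi^*(\alpha)=\pi_*(E)\cdot\alpha=0$ (since $\pi_*$ annihilates exceptional divisors) and $\pi^*\det(\cF)\cdot\pi^*(\alpha)=\det(\cF)\cdot\pi_*\pi^*(\alpha)=\det(\cF)\cdot\alpha$ (since $\pi$ has degree $1$); dividing by the common rank gives $\mu_{\alpha'}(\cF')=\mu_{\alpha}(\cF)$. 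The identical argument gives a slope-preserving correspondence between torsionfree coherent sheaves on $X$ and $X'$ that agree on $U\cong U'$, where one passes from one side to the other by restricting to the big open and taking the saturation of the transferred subsheaf in the ambient sheaf.

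Using this correspondence I prove $\mu_{\alpha',\min}(\cF')=\mu_{\alpha,\min}(\cF)$ by both inequalities. For ``$\leq$'': the last HN graded piece $\cQ:=\cF/\cH_{s-1}$ satisfies $\mu_{\alpha}(\cQ)=\mu_{\alpha,\min}(\cF)$; define $\mathcal{K}'\subset \cF'$ as the saturation in $\cF'$ of the subsheaf corresponding to $\cH_{s-1}|_U$ under $\pi|_{U'}$, and set $\cQ':=\cF'/\mathcal{K}'$, a torsionfree quotient of $\cF'$ agreeing with $\cQ$ on $U'\cong U$. The correspondence yields $\mu_{\alpha'}(\cQ')=\mu_{\alpha}(\cQ)=\mu_{\alpha,\min}(\cF)$, whence $\mu_{\alpha',\min}(\cF')\leq\mu_{\alpha,\min}(\cF)$. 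For ``$\geq$'': given any torsionfree quotient $\cF'\to \cR'$, the same construction in reverse (saturate in $\cF$ the subsheaf matching $\ker(\cF'\to\cR')|_{U'}$, and take the quotient) produces a torsionfree quotient $\cF\to \cR$ with $\mu_{\alpha}(\cR)=\mu_{\alpha'}(\cR')$, and $\mu_{\alpha}(\cR)\geq\mu_{\alpha,\min}(\cF)$ holds by definition.

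The main technical point is the slope-preserving bijection of torsionfree quotients established in the second paragraph; everything there reduces to the projection-formula computation together with the reflexivity of rank-one sheaves, which makes $\det$ determined by its restriction to any big open up to an exceptional correction that vanishes against $\pi^*(\alpha)$. Verifying that the saturations define coherent torsionfree quotients globally is routine, since the saturation of a coherent subsheaf inside a torsionfree coherent sheaf is itself coherent and saturated, with torsionfree quotient.
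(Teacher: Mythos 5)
Your proof is correct and follows essentially the same route as the paper: the whole argument rests on the facts that $\det(\cF')$ and $\pi^*(\det(\cF))$ (and likewise the determinants of corresponding quotients) agree off the $\pi$-exceptional divisors, that $\pi^*(\alpha)$ annihilates those divisors, and that the projection formula gives $\pi^*D'\cdot\pi^*\alpha=D'\cdot\alpha$. The paper states this in one line and leaves the transfer of the Harder--Narasimhan/quotient data implicit; you have simply spelled out that transfer via the isomorphism over the big open subset of $X$, which is exactly what the paper's "the statements hold for $\pi^*(\cF)$, hence for $\cF'$" is meant to encode.
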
 
 
 \begin{proof} The statements hold for $\pi^*(\cF)$, hence for $\cF'$ since $det(\cF')$ and $det(\pi^*(\cF))$ coincide outside the exceptional divisor of $\pi$, and $\alpha'.E=0$ for each component $E$ of this divisor. \end{proof}
 
 \begin{remark} The statement does not (always) hold `down' (i.e: if $\alpha=\pi_*(\alpha'),$ but $\alpha'\neq \pi^*(\alpha)$:  let $X=\Bbb P^2$, let $\cF$ be tangent to the conics $C$ going through $4$ points in general position, and let $\alpha'$ be the class of the strict transforms $C'$ of these conics on the blow-up $X'$ of $\Bbb P^2$ in the $4$ given points.
Then $\mu_{\alpha'}(\cF')=2>0$, since $\cF'_{\vert C'}=-K_{C'}$, but $\mu_{\alpha}(\cF)=2-4=-2>0$. \end{remark}


\section{Pseudo-effectivity of relative canonical bundles.} 

Let $f:X\to Z$ be a fibration between complex projective manifolds. Let $X_z$ be its generic (smooth) fibre. Let $f^*(\Omega^1_Z)^{sat}\subset \Omega^1_X$ be the saturation of $f^*(\Omega^1_Z)$, and denote by $f^*(K_Z)^+:=det(f^*(K_Z)^{sat})$. Let finally $K^-_{X/Z}:=K_X-f^*(K_Z)^+$.

If $\cF:=Ker((df)^{sat})\subset TX$, we thus have: $det(\cF^*)=K^-_{X/Z}$, by the consideration of the determinants in the exact sequence: $0\to (f^*(\Omega_Z^1))^{sat}\to \Omega^1_X\to \cF^*\to 0$, dual to: $0\to \cF\to TX\to TX/\cF\to 0$.

\begin{definition}\label{neat} We say that $f$ is `neat' if:

1. The discriminant locus $D(f)\subset Z$ of singular fibres of $f$ is a divisor of strict normal crossings.

2. $f^{-1}(D(f))\subset X$ is also a divisor of simple normal crossings.

3. There exists an equidimensional fibration $f_0:X_0\to Z$ and a birational morphism $u:X\to X_0$ such that $f_0\circ u= f$.
\end{definition}

 By suitable blow-ups of $Z$ and $X$, any fibration can be made `neat', as a consequence of Hironaka's  resolution of singularities, and Hironaka's or Raynaud's flattening theorems.

\begin{theorem}\label{KX/Zpseff} Let $f:X\to Z$ be a fibration between complex projective manifolds. Let $X_z$ be its generic (smooth) fibre. If $f$ is `neat', and if  $K_{X_z}$ is pseudo-effective,  $K^-_{X/Z}$ is pseudo-effective.
\end{theorem}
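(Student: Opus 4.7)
The plan is to argue by contradiction, combining the BDPP criterion (Theorem \ref{pseff}), the Harder--Narasimhan machinery of Section \ref{sslope}, the algebraicity criterion (Theorems \ref{mu-alg} and \ref{Mu-alg}), and the Miyaoka--Mori uniruledness theorem. Set $\cF := Ker(df)^{sat} \subset TX$, so that $\det(\cF^*) = K^-_{X/Z}$. By Theorem \ref{pseff} it suffices to show $K^-_{X/Z}\cdot\alpha \geq 0$ for every $\alpha \in Mov(X)$. Suppose instead that some $\alpha_0 \in Mov(X)$ satisfies $\det(\cF)\cdot\alpha_0 > 0$, equivalently $\mu_{\alpha_0}(\cF) > 0$, whence $\mu_{\alpha_0,max}(TX) > 0$.

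Let $\cH := TX^{\alpha_0,max}$. As a piece of the HN filtration it is $\alpha_0$-semistable with $\mu_{\alpha_0,min}(\cH)>0$; the bound $\mu_{\alpha_0,max}(TX/\cH) < \mu_{\alpha_0,min}(\cH)$ immediately yields the Miyaoka slope inequality $2\mu_{\alpha_0,min}(\cH) > \mu_{\alpha_0,max}(TX/\cH)$. Theorem \ref{mu-alg} then makes $\cH$ an algebraic foliation, and Theorem \ref{Mu-alg} gives that its leaves have rationally connected closures, which cover $X$. The crux is to show that the general leaf of $\cH$ is vertical, i.e.\ $\cH \subset \cF$: granting this, for general $z \in Z$ the fibre $X_z$ is covered by positive-dimensional rationally connected subvarieties (the closures of the $\cH$-leaves lying in $X_z$), so $X_z$ is uniruled, and the Miyaoka--Mori theorem then forces $K_{X_z}$ not to be pseudo-effective, contradicting the hypothesis.

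Verticality amounts to vanishing of the composition $\cH \hookrightarrow TX \twoheadrightarrow TX/\cF$; otherwise its image would be a subsheaf of $TX/\cF$ of $\alpha_0$-slope at least $\mu_{\alpha_0,min}(\cH)>0$, and the RC leaf-closures of $\cH$ would have positive-dimensional, rationally connected images in $Z$. To rule this out one exploits the neatness of $f$ (Definition \ref{neat}) together with Corollary \ref{bir}: after passing to the MRC fibration $Z \dasharrow Z^{rc}$ and pulling back along a suitable neat model, the RC images of $\cH$-leaves in $Z$ are forced into the fibres of $Z \dasharrow Z^{rc}$, and one iterates until the base is non-uniruled, where horizontal RC leaves cannot exist. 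I expect this verticality step to be the main technical obstacle: $\det(TX/\cF) = -f^*(K_Z)^+$ is pulled back from $Z$ and gives no a priori bound on the $\alpha_0$-slopes of subsheaves of $TX/\cF$, so ruling out positive-slope subsheaves requires genuine geometric input combining the neatness of $f$ with the MRC structure of $Z$ and the algebraicity of $\cH$, rather than pure slope bookkeeping.
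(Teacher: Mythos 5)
There is a genuine gap, and in fact the overall strategy is circular with respect to the paper's logical architecture. Your argument invokes Theorem \ref{Mu-alg} to conclude that the leaves of $\cH:=TX^{\alpha_0,max}$ have rationally connected closures. But in the paper that statement is only established as Theorem \ref{algrc}, whose proof rests on Corollary \ref{KXz<0}, which in turn is deduced from Theorem \ref{KX/Zpseff} --- the very statement you are trying to prove. The paper is explicit about this ordering: Corollary \ref{mumin>0} notes that at that stage one only has algebraicity of a positive-slope foliation, the rational connectedness of leaf closures ``requiring new notions and techniques presented in the next two sections'', namely Theorem \ref{KX/Zpseff} together with the MRC fibration and \cite{GHS}. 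Theorem \ref{KX/Zpseff} itself is not proved internally by the foliation machinery at all: it is quoted as strengthening a weak version of Viehweg's theorem on weak positivity of direct images of pluricanonical sheaves, and the proof in \cite{CPa} goes through that positivity theory (a special case being due to Miyaoka via characteristic-$p$ methods). No argument built only from Theorems \ref{mu-alg}, \ref{Mu-alg} and Miyaoka--Mori can establish it without importing that external input.

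Independently of the circularity, the verticality step fails. There is no reason for $TX^{\alpha_0,max}$ to be contained in $\cF=Ker(df)^{sat}$, and your proposed mechanism for forcing it --- pushing the images of the $\cH$-leaves into the fibres of the MRC of $Z$ and iterating --- collapses precisely when $Z$ is uniruled or rationally connected, which the hypotheses permit (only the fibres $X_z$ are assumed to have pseudo-effective canonical bundle; $Z=\Bbb P^1$ is allowed). For $X=\Bbb P^1\times C$ with $g(C)\geq 2$ and $f$ the projection onto $Z=\Bbb P^1$, the maximal destabilizing subsheaf of $TX$ for suitable $\alpha$ is the horizontal foliation by the $\Bbb P^1$'s: it is algebraic with rationally connected leaves and surjects onto $Z$, and nothing forces it into $\cF$. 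A more promising vertical candidate would be $\cF^{\alpha_0,max}$, the maximal destabilizing subsheaf of $\cF$ itself, which is contained in $\cF$, has positive minimal slope, and is a foliation by the integrability of $\cF$ and Corollary \ref{miy}; but to contradict the pseudo-effectivity of $K_{X_z}$ you would still need its leaf closures to be uniruled, and that is again exactly the content supplied downstream by Theorem \ref{KX/Zpseff}. Since movable classes on $X$ do not restrict to movable classes on the fibres, Miyaoka--Mori cannot be applied fibrewise to close this loop.
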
 

This result strengthens a weak version of Viehweg's theorem on the weak positivity of direct images of pluricanonical  sheaves. A particular case was also obtained by Miyaoka using deformations of rational curves, Mori's bend and break, and positive characteristic methods.

\begin{corollary}\label{KXz<0} Assume that $K_X$ is not pseudo-effective, let $\alpha\in Mov(X)$  be such that $K_X.\alpha<0$. Let $\cF:=TX^{\alpha,max}\subset TX$, thus $\mu_{\alpha,min}(\cF)>0$, and so $\cF$ is a foliation, by corollary \ref{mumin>0}, and algebraic, by Theorem \ref{algcrit}. Moreover, after suitable blow-ups of $X$, we have: if $f:X\to Z$ is a fibration such that $\cF=Ker(df)^{sat}\subset TX$, then:

1.  $K_{X_z}$ is not pseudo-effective. More generally:

2. If $f=h\circ g$ for rational fibrations $g:X\to Y$ and $h:Y\to Z$, then $K_{Y_z}$ is not pseudo-effective, $Y_z$ being the generic fibre of $h$ (on suitable birational neat models of $g,h)$.
\end{corollary}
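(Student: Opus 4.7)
That $\mu_{\alpha,min}(\cF) > 0$ and $\cF$ is algebraic follows from material already in place. From $K_X \cdot \alpha < 0$ we get $\det(TX) \cdot \alpha > 0$, hence by Lemma~\ref{slope}(3), $\mu_{\alpha,max}(TX) = \mu_{\alpha,min}(TX^{\alpha,max}) > 0$; as $\cF = TX^{\alpha,max}$ is $\alpha$-semistable this gives $\mu_{\alpha,min}(\cF) = \mu_\alpha(\cF) > 0$. By definition of the Harder--Narasimhan filtration, $\mu_{\alpha,min}(\cF) > \mu_{\alpha,max}(TX/\cF)$, so the hypothesis of Theorem~\ref{mu-alg} is satisfied and $\cF$ is an algebraic foliation. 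Replacing $X$ by a suitable birational model (Hironaka plus Raynaud flattening) and invoking Lemma~\ref{birup}, I may assume that $f:X\to Z$ is a neat fibration with $\cF = Ker(df)^{sat}$, still with $\det(\cF)\cdot\alpha > 0$.

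\textbf{Claim 1.} I argue by contradiction via Theorem~\ref{KX/Zpseff}. Dualising $0\to \cF\to TX\to TX/\cF\to 0$ shows $\det(\cF^*)=K^-_{X/Z}$. Were $K_{X_z}$ pseudo-effective, Theorem~\ref{KX/Zpseff} would force $K^-_{X/Z} = -\det(\cF)$ to be pseudo-effective, whence Theorem~\ref{pseff} would give $-\det(\cF)\cdot\alpha \geq 0$, contradicting $\det(\cF)\cdot\alpha > 0$.

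\textbf{Claim 2.} After passing to neat birational models of $g:X\to Y$ and $h:Y\to Z$, set $\cF_g := Ker(dg)^{sat}\subset TX$ and $\cG := Ker(dh)^{sat}\subset TY$, so that $\cF_g\subset \cF$ and $\det(\cG^*)=K^-_{Y/Z}$. I shall exhibit a class $\beta\in Mov(Y)$ with $\det(\cG)\cdot\beta > 0$; by Theorem~\ref{pseff} this says $K^-_{Y/Z} = -\det(\cG)$ is not pseudo-effective, and then the contrapositive of Theorem~\ref{KX/Zpseff} applied to $h$ yields that $K_{Y_z}$ is not pseudo-effective. Take $\beta := g_*(\alpha)\in Mov(Y)$ (Corollary~\ref{bir}). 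The restriction to $\cF$ of $dg:TX\to g^*(TY)$ has kernel $\cF_g$ and lands in $g^*(\cG)$, yielding an injection $\cF/\cF_g\hookrightarrow g^*(\cG)$ of torsion-free sheaves of the same rank; hence $g^*(\det(\cG)) = \det(\cF/\cF_g) + D$ for some effective divisor $D$. Applying the projection formula,
\[
\det(\cG)\cdot\beta \;=\; g^*(\det(\cG))\cdot\alpha \;\geq\; \det(\cF/\cF_g)\cdot\alpha,
\]
the inequality because $D$ is effective and $\alpha$ movable. Now $\cF$ is $\alpha$-semistable with $\mu_\alpha(\cF) > 0$ and $\cF_g\subsetneq \cF$, so $\mu_\alpha(\cF_g) \leq \mu_\alpha(\cF)$; the rank identity for $0\to \cF_g\to \cF\to \cF/\cF_g\to 0$ then gives $\mu_\alpha(\cF/\cF_g) \geq \mu_\alpha(\cF) > 0$, so $\det(\cF/\cF_g)\cdot\alpha > 0$ and hence $\det(\cG)\cdot\beta > 0$, as required.

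\textbf{Main difficulty.} The only delicate step is the sheaf-theoretic comparison $\cF/\cF_g\hookrightarrow g^*(\cG)$: one must verify on the chosen neat models that this map is generically an isomorphism of torsion-free sheaves of equal rank, so that the difference of determinants is represented by an \emph{effective} divisor. The neatness hypotheses (equidimensionality of $g,h$ together with snc discriminants) are exactly what control where this map fails to be an isomorphism; granted this, the remainder of the argument is the slope computation above combined with non-negative pairing of movable classes against effective divisors.
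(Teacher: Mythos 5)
Your argument is correct and follows the paper's proof in all essentials: Claim 1 is identical, and for Claim 2 your pairing of $\det(\cG)$ with $g_*(\alpha)$ on $Y$ is, by the projection formula, the very same computation as the paper's pairing of $g^*(\det(\cH^*))$ with $\alpha$ on $X$ (the paper phrases it as a contradiction, you as a direct positivity statement), both resting on the generic identification of $g^*(\cH)$ with a quotient of $\cF$ and the $\alpha$-semistability of $\cF$. One small citation slip: Corollary \ref{bir} concerns birational morphisms, so to see that $g_*(\alpha)\in Mov(Y)$ for the fibration $g$ you should instead check directly that $g_*(\alpha)\cdot D=\alpha\cdot g^*(D)\geq 0$ for every irreducible effective divisor $D$ on $Y$ and invoke the \cite{BDPP} characterisation of the movable cone.
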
 

\begin{proof} We may blow-up $X,Z_0$ for any rational fibration $f_0:X\dasharrow Z_0$ with generic fibre the closure of a generic leaf of $\cF$ in such a way that the birational model $f:X\to Z$ (we keep the notation $X$ for the blown-up manifold, so as to save notations) so obtained is regular and `neat'. By Lemma \ref{birup}, the property $\mu_{\alpha,min}(\cF)$ is preserved on the blown-up manifold by lifting both the foliation and the movable class in the natural manner, so that the Theorem \ref{KX/Zpseff} can be applied to the new $f:X\to Z$.

Claim 1. By contradiction, assume not. Then $K^-_{X/Z}=det(\cF^*)$ is pseudo-effective, by Theorem \ref{KX/Zpseff}. Thus $0\leq K^-_{X/Z}.\alpha=det(\cF^*).\alpha= -\mu_{\alpha}(\cF)<0$. Contradiction. 

Claim 2. By contradiction again, assume  that $K_{Y_z}$ is pseudo-effective. By applying Theorem \ref{KX/Zpseff} to $h$, $det(\cH^*)$, and so also  $g^*(det(\cH)^*)$ were pseudo-effective, with $\cH:=Ker(dh)^{sat}\subset TY$. Thus $\mu_{\alpha}(g^*(\cH^*))=-\mu_{\alpha}(g^*(\cH))\geq 0$. On the other hand, the fibration $g_{\vert X_z}:=g_z:X_z\to Y_z$ implies that $g^*(\cH)$ is a quotient of $\cF$, so $\mu_{\alpha}(g^*(\cH))\geq \mu_{\alpha,min}(\cF)>0$. Contradiction.
\end{proof}

\begin{remark} With (much) more work, it is possible to show more: for general $z\in Z$, there exists $\alpha_z\in Mov(X_z)$ such that $\mu_{\alpha_z,min}(TX_z)>0$. We shall prove this less directly, but more simply, by using the theory of rational curves in the next section.
\end{remark}


\section{Rational curves and non-pseudoeffectivity of the canonical/cotangent  bundles.}

\begin{definition}\label{ur,rc} $X$ is said to be uniruled (resp. rationally connected) if through any generic point (resp. any two generic points) of $X$ pass an irreducible rational curve (i.e: a non-constant image of $\Bbb P^1)$.
\end{definition}

\begin{example} 1. $\Bbb P^n$ is rationally connected, so also any $X$ birational to $\Bbb P^n$, or rationally dominated by $\Bbb P^n$.

2. $X$ is rationally connected if $-K_X$ is ample (i.e: if $X$ is Fano).

3. Smooth hypersurfaces of $\Bbb P^{n+1}$ of degree at most $n+1$ are Fano (by adjunction), hence rationally connected.

4. If $X\to Z$ is a fibration with $Z$ and $X_z$ rationally connected, then $X$ is rationally connected (a deep result of \cite{GHS}).
\end{example}

The present birational theory of rational curves on projective manifolds is based on \cite{GHS} and the following two results:

\begin{theorem}\label{tur}(\cite{MM}) $X$ is uniruled if (and only if) $K_X$ is not pseudo-effective.
\end{theorem}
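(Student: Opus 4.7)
The plan is to prove the two directions separately; only one is straightforward, and the other is the celebrated theorem of Miyaoka--Mori.

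\emph{Easy direction} ($X$ uniruled $\Rightarrow$ $K_X$ not pseudo-effective). Let $(C_t)_{t\in T}$ be a covering family of (irreducible) rational curves, and set $\alpha:=[C_t]\in Mov(X)$; the class is movable essentially by definition of a covering algebraic family. Pick a general $t$, so that the normalisation $\nu:\mathbb{P}^1\to C_t\subset X$ is a free rational curve (its deformations sweep out $X$). Then $\nu^*TX$ is generated by its global sections, and contains the nonzero sub $T\mathbb{P}^1\cong\cO(2)$, so
\[
\deg\nu^*TX\geq 2,
\]
hence $K_X\cdot\alpha=-\deg\nu^*TX\leq -2<0$. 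By Theorem \ref{pseff} (or its immediate formulation), $K_X$ is not pseudo-effective.

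\emph{Hard direction} ($K_X$ not pseudo-effective $\Rightarrow$ $X$ uniruled). Here the strategy is the one of Miyaoka--Mori, based entirely on reduction mod $p$ and Mori's bend-and-break. First, by Theorem \ref{pseff} the hypothesis provides $\alpha\in Mov(X)$ with $K_X\cdot\alpha<0$, and by the characterisation of movable classes one may assume $\alpha$ is represented by an algebraic family of irreducible curves $(C_t)$ covering $X$, with $K_X\cdot C_t<0$ on the generic member. Spread $X$ and this family over a finitely generated $\ZZ$-subalgebra of the base field, so that we can specialise to a closed fibre $X_p$ over a point of positive characteristic $p$, still equipped with a covering family of curves having negative intersection with $K_{X_p}$.

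In characteristic $p$ one then exploits the $e$-th Frobenius morphism $F^e:X_p\to X_p^{(p^e)}$: composing the curves $C_t$ with pull-back under $F^e$ keeps their (geometric) genus bounded while multiplying $K\cdot C_t$ by $p^e$, making it arbitrarily negative. Fixing a general point $x\in X_p$ and letting a curve pass through $x$, Mori's bend-and-break lemma applies: once $-K\cdot C$ exceeds a bound depending on the number of fixed points and $\dim X$, the curve must degenerate, and a component of the degeneration is a rational curve through $x$. Thus $X_p$ is uniruled for all large $p$. The final step is the standard descent from characteristic $p$ to characteristic $0$: uniruledness is a constructible property on the Hilbert scheme of rational curves on $X$, so if it holds on a dense set of closed fibres of the spread, it holds on the generic fibre, and therefore on the original $X$.

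The easy direction is, in effect, already contained in the discussion of Theorem \ref{pseff} and presents no difficulty. The substantial obstacle is the hard direction: one genuinely needs to create rational curves out of a purely numerical negativity hypothesis on $K_X$, and as the author emphasises above, the only known route passes through reduction mod $p$, Frobenius amplification, and Mori's bend-and-break --- there is no purely transcendental/K\"ahler proof known.
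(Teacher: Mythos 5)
The paper does not prove this statement at all: Theorem \ref{tur} is imported as a black box from \cite{MM}, and the author explicitly remarks (after its first occurrence in \S\ref{mov}) that the only known proof rests on positive-characteristic techniques. So there is no internal proof to compare yours against; note in particular that the implication ``$K_X$ not pseudo-effective $\Rightarrow$ uniruled'' which reappears inside Corollary \ref{ur} is derived there via Corollary \ref{mumin>0} and Theorem \ref{algrc}, but the proof of Theorem \ref{algrc} itself invokes Theorem \ref{tur} on the fibres, so that chain is not an independent proof and the characteristic-$p$ input remains irreducible.

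Your sketch is a correct outline of the standard Miyaoka--Mori argument combined with the \cite{BDPP} duality, and it is consistent with what the paper asserts about the nature of the proof. Two points deserve more care if you were to write it out. First, in the hard direction, passing from a movable class $\alpha$ with $K_X\cdot\alpha<0$ to an honest covering family of irreducible curves $(C_t)$ with $K_X\cdot C_t<0$ uses the \cite{BDPP} description of $Mov(X)$ as generated by pushforwards of complete-intersection curves of very ample divisors on modifications, together with the openness of the condition $K_X\cdot(\;\cdot\;)<0$ to replace $\alpha$ by a nearby geometrically movable class; this is exactly the bridge between the ``numerical'' hypothesis and the hypothesis of the original Miyaoka--Mori statement quoted in the paper. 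Second, in the descent from characteristic $p$ back to characteristic $0$, what makes the specialisation argument work is that bend-and-break produces rational curves of degree bounded independently of $p$ (e.g. $H\cdot \ell\leq 2\dim X\,(H\cdot C)/(-K\cdot C)$); without this uniform bound the ``constructibility on the space of rational curves'' you invoke would not close the argument. With those two clarifications your proposal is the standard and essentially complete route to the theorem.
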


\begin{theorem}\label{trc} There is a unique fibration $r:X\to R$ (called the Maximally Rationally Connected (or MRC-) fibration of $X)$ such that:

1. its fibres are rationally connected.

2. its base is not uniruled (i.e: $K_R$ is pseudo-effective).
\end{theorem}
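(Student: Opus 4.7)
The plan is to construct $r$ via the equivalence relation ``joined by a chain of rational curves'' and then verify properties 1, 2, and uniqueness in that order.

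First I would parameterize, via the Chow scheme of $X$, the countably many irreducible families whose generic member is a connected union of rational curves. This defines an equivalence relation $\sim$ on $X$ in which $x \sim y$ iff they lie on such a chain. By Campana's rational quotient construction (equivalently, Koll\'ar--Miyaoka--Mori's), $\sim$ is represented by an almost holomorphic rational map $r: X \dashrightarrow R$, meaning that the closure of a very general equivalence class is exactly a fiber of $r$. After a birational modification we take $R$ smooth projective. This step is classical and I would merely invoke it.

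Next I would upgrade rational chain connectedness of the general fiber $F$ to actual rational connectedness (property 1). This uses the standard smoothing-of-combs technique: given $x,y \in F$ linked by a chain, attach sufficiently many free rational curves to obtain a comb, then smooth it to a single rational curve through $x$ and $y$. Freeness of the teeth is guaranteed on a general fiber by choosing the rational curves in the covering families to have minimal anticanonical degree within their deformation class.

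The main obstacle is property 2, non-uniruledness of $R$. Suppose for contradiction that $R$ is uniruled. By Theorem \ref{tur}, $R$ carries a covering family $\{C_t\}_{t \in T}$ of rational curves. For very general $t$, take a smooth birational model of the pullback fibration $r^{-1}(C_t) \to C_t$: the generic fiber is a general MRC-fiber of $r$, hence rationally connected by the previous step, and the base $\Bbb P^1$ is rationally connected, so by the Graber--Harris--Starr theorem (Example 4 preceding Theorem \ref{tur}) the total space $r^{-1}(C_t)$ is itself rationally connected. Thus two points in $X$ sitting over distinct points of $C_t$ can be joined by a rational chain, contradicting the maximality of the $\sim$-equivalence classes as fibers of $r$. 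Hence $K_R$ is pseudo-effective.

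Finally, uniqueness is formal: given any competing $r': X \dashrightarrow R'$ with rationally connected fibers and non-uniruled base, the image in $R'$ of a general fiber of $r$ is dominated by rational curves but lies in the non-uniruled $R'$, so it is a point; hence $r'$ factors birationally through $r$, and by symmetry $r$ and $r'$ agree birationally.
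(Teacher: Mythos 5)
The paper does not actually prove Theorem \ref{trc}: it is quoted as a known result, resting on the rational quotient construction of Campana and Koll\'ar--Miyaoka--Mori together with the Graber--Harris--Starr theorem \cite{GHS}, which is exactly the route you follow. Your outline is the standard proof and is correct in its main steps: the Chow-scheme construction of the quotient by the rational-chain equivalence relation, the comb-smoothing upgrade from rational chain connectedness to rational connectedness on the smooth general fibre, and the application of \cite{GHS} to a smooth model of $r^{-1}(C_t)\to C_t$ to contradict maximality if $R$ were uniruled are all in order. The one place to tighten is the uniqueness step: a uniruled subvariety can perfectly well sit inside a non-uniruled variety (a rational curve on a K3 surface, say), so ``the image in $R'$ of a general fibre of $r$ is dominated by rational curves and $R'$ is not uniruled, hence it is a point'' is not literally valid for a single fibre. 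The correct argument uses that the images $r'(F)$ sweep out $R'$ as the general fibre $F$ of $r$ varies: if these images were positive-dimensional, the rational curves covering them would assemble into a covering family of rational curves on $R'$, contradicting non-uniruledness; hence each $r'(F)$ is a point and $r'$ factors through $r$, and the symmetric argument (using that the fibres of $r'$ are rationally connected, hence each lies in a single $\sim$-class, i.e.\ in a fibre of $r$) gives the factorization the other way. With that fix the proof is complete.
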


Although in general, $r:X\to R$ is rational (almost holomorphic, in the sense that its generic fibre does not meet the indeterminacy locus), since all the properties considered here are of birational nature, we may and shall assume that $r$ is regular. 

The two extreme cases are: $X=R$, or $R$ is a point, meaning respectively that $X$ is not uniruled, and that $X$ is rationally connected.

\medskip

Using Chow-scheme theory, one gets a relative version as well, on suitable birational models:

\begin{corollary} \label{MRCrel}If $f:X\to Z$ is a fibration, there exists $g:X\to Y$ and $h:Y\to Z$ such that for general $z\in Z$, $g_z:=g_{\vert X_z}:X_z\to Y_z$ is the MRC of $X_z$, and so $K_{Y_z}$ is pseudo-effective.
\end{corollary}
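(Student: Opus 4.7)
The plan is to realize the corollary as a relative version of Theorem \ref{trc}, constructed by running the usual Chow-scheme proof of the MRC fibration in families over $Z$ rather than over a point.

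First, I would pass to convenient birational models so that $f:X\to Z$ is flat (or at least equidimensional) with irreducible generic fibres, using Hironaka-Raynaud flattening. Next I form the relative Chow scheme $\mathrm{Chow}(X/Z)$ and let $\cR\to Z$ denote its union of irreducible components whose generic member is a rational curve contained in a fibre of $f$. The universal family gives an incidence correspondence $\Sigma\subset X\times_Z X$ consisting of pairs $(x_1,x_2)$ lying in the same fibre $X_z$ and joined there by a rational curve from $\cR$; $\Sigma$ is a Zariski closed subset of $X\times_Z X$.

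Next I iterate: let $\Sigma^{(k)}\subset X\times_Z X$ be the locus of pairs connected by a chain of at most $k$ such rational curves (a constructible, hence Zariski locally closed set over $Z$). By the usual boundedness of families of rational curves of bounded degree and Noetherianity of the relative Chow scheme, this chain of constructible subsets stabilises on a dense open part of $Z$, producing a closed equivalence relation $\Sigma^{(\infty)}\subset X\times_Z X$ whose restriction to a generic fibre $X_z\times X_z$ is exactly the `chain-connectedness by rational curves' relation used by Campana and by Koll\'ar-Miyaoka-Mori to construct the MRC. Chow-scheme quotient theory then yields (after a further birational modification of $X$, which we absorb into the notation) a factorisation $f=h\circ g$ with $g:X\to Y$ and $h:Y\to Z$ such that for generic $z\in Z$ the restriction $g_z:X_z\to Y_z$ realises the MRC fibration of $X_z$. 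In particular the fibres of $g_z$ are rationally connected and $Y_z$ is not uniruled.

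Finally, the pseudo-effectivity statement: by construction $Y_z$ is not uniruled, and Theorem \ref{tur} gives the equivalence between non-uniruledness and pseudo-effectivity of $K_{Y_z}$; hence $K_{Y_z}$ is pseudo-effective for general $z$, as asserted. The main technical obstacle in this outline is checking that the fibrewise MRC varies algebraically with $z$ -- equivalently, that the equivalence relation $\Sigma^{(\infty)}$ is really algebraic over a Zariski open subset of $Z$. This is precisely what the relative Chow-scheme argument, combined with the boundedness of the families of rational curves swept out by the MRC of each $X_z$, is designed to provide; once this is granted, everything else is a direct reduction to the absolute Theorem \ref{trc} applied fibre by fibre.
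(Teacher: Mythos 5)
Your proposal is correct and follows essentially the same route the paper indicates: the paper gives no written proof of Corollary \ref{MRCrel} beyond the remark that ``using Chow-scheme theory, one gets a relative version as well, on suitable birational models,'' and your argument is exactly that relative Chow-scheme construction, reducing fibrewise to Theorem \ref{trc}. The only point to watch is that the stabilisation of the chain-connectedness correspondences $\Sigma^{(k)}$ holds only through the very general point (the classes need not be Zariski-closed, which is why the MRC is merely almost holomorphic), but you flag this yourself and it is handled by the standard quotient theorem of Campana and Koll\'ar--Miyaoka--Mori.
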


\begin{theorem}\label{algrc} Let $\cF\subset TX$ be a foliation such that $\mu_{\alpha,min}(\cF)>0$, for some $\alpha\in Mov(X)$. Then $\cF$ is algebraic, and its leaves have rationally connected closures.
\end{theorem}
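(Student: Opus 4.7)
The plan is to combine the already-established algebraicity criterion with the relative MRC fibration, and to derive a slope-theoretic contradiction if the closed leaves fail to be rationally connected. The argument closely follows the pattern of Corollary \ref{KXz<0}(2), which handled the special case $\cF = TX^{\alpha,max}$.

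First, by Lemma \ref{notpseff}, the hypothesis $\mu_{\alpha,min}(\cF) > 0$ forces $\cF^*$ to be not pseudo-effective, so Theorem \ref{algcrit} gives that $\cF$ is algebraic: there is a rational fibration $f: X \dasharrow Z$ whose generic fibre $X_z$ is the topological closure of a generic leaf of $\cF$. Replacing $X$ and $Z$ by suitable birational models, I may assume $f$ is regular and neat in the sense of Definition \ref{neat}, with $\cF = Ker(df)^{sat}$; by Lemma \ref{birup}, the condition $\mu_{\alpha,min}(\cF) > 0$ is preserved under this blow-up (with $\alpha$ replaced by its pull-back).

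Next, apply Corollary \ref{MRCrel} to this neat $f$: after further birational modifications I obtain a factorisation $f = h\circ g$ with $g:X\to Y$, $h:Y\to Z$, such that for generic $z\in Z$ the restriction $g_z:X_z\to Y_z$ is the MRC-fibration of $X_z$, and in particular $K_{Y_z}$ is pseudo-effective. Rational connectedness of the closed leaves is the statement $\dim Y_z=0$, which I prove by contradiction: assume $\dim Y_z>0$. Set $\cH:=Ker(dh)^{sat}\subset TY$, so that $\det(\cH^*)=K^-_{Y/Z}$. After further blow-ups making $h$ neat, Theorem \ref{KX/Zpseff} asserts that $K^-_{Y/Z}$ is pseudo-effective on $Y$. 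Pulling back via $g$, the line bundle $g^*(\det(\cH^*))$ is pseudo-effective on $X$; pairing with the movable class $\alpha$ gives $\mu_\alpha(g^*\cH)\leq 0$.

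To finish, I compare this with the positivity supplied by $\cF$. On the big open locus where $f,g,h$ are all submersions, the inclusion $Ker(dg)\subset Ker(df)=\cF$ combined with the differential $dg:TX\to g^*(TY)$ induces an identification $\cF/Ker(dg)\simeq g^*(Ker(dh))=g^*(\cH)$; in particular $g^*(\cH)$ is, over a big open subset, a nonzero torsion-free quotient of $\cF$. Lemma \ref{slope}(1) then gives $\mu_\alpha(g^*\cH)\geq \mu_{\alpha,min}(\cF)>0$, contradicting the previous inequality. Hence $\dim Y_z=0$, and $X_z$ is rationally connected, as required. The main obstacle is the careful bookkeeping of successive blow-ups required so that both $f$ and $h$ become neat while $\mu_{\alpha,min}(\cF)>0$ is preserved on every model (guaranteed by Lemma \ref{birup}), and so that the morphism $\cF\to g^*(\cH)$ is defined and generically surjective on a big open subset, which is exactly what is needed for the slope comparison of Lemma \ref{slope}(1) to be valid at the reflexive/saturated level.
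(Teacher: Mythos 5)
Your proof is correct and follows essentially the same route as the paper: algebraicity via Lemma \ref{notpseff} and Theorem \ref{algcrit}, then the relative MRC factorisation $f=h\circ g$ of Corollary \ref{MRCrel}, with rational connectedness forced by playing Theorem \ref{KX/Zpseff} (pseudo-effectivity of $\det(\cH^*)=K^-_{Y/Z}$) against the slope inequality $\mu_\alpha(g^*\cH)\geq\mu_{\alpha,min}(\cF)>0$. The only difference is presentational: the paper delegates this last contradiction to Corollary \ref{KXz<0}(1)--(2), whereas you inline the same argument (which, as you implicitly note, applies verbatim to any foliation with $\mu_{\alpha,min}(\cF)>0$, not just to $TX^{\alpha,max}$).
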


\begin{proof} The algebraicity of $\cF$ follows from Theorem \ref{mu-alg}. Let $f=h\circ g$ be the relative MRC of $f$, a suitable fibration such that $\cF=Ker(df)^{sat}$. By Corollary \ref{KXz<0}.1, and Theorem \ref{tur}, $X_z$ is uniruled and so $dim(Y)<dim(X)$. By Corollary \ref{KXz<0}.2, $K_{Y_z}$ is not pseudo-effective if $dim(Y)>dim(Z)$, contradicting Corollary \ref{MRCrel}. Thus $Y=Z$, and the claim.
\end{proof}

\begin{corollary}\label{ur} The following are equivalent:

1. $X$ is uniruled.

2. $K_X$ is not pseudo-effective.

3. $h^0(X,mK_X+A)=0$ for any ample $A$ and large $m$.

4. $\mu_{\alpha,min}(\Omega^1_X)<0$, for some $\alpha \in Mov(X)$.
\end{corollary}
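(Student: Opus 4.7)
The plan is to establish the four-way equivalence as a short cycle, with essentially every nontrivial piece already packaged in earlier results. The equivalence $(2)\Leftrightarrow(3)$ I would treat as an unpacking of the definition of pseudo-effectivity applied to the line bundle $K_X$: the sections $h^0(X,mK_X+A)$ vanish for all sufficiently large $m$ (for one, equivalently every, ample $A$) exactly when $K_X$ fails to be pseudo-effective. The equivalence $(1)\Leftrightarrow(2)$ is nothing more than citing Theorem \ref{tur} (Miyaoka–Mori) in both directions; the easy direction uses that a covering family of rational curves gives a movable class $\alpha$ with $K_X\cdot\alpha<0$ combined with Theorem \ref{pseff}.

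The arrow $(2)\Rightarrow(4)$ I would prove directly. Applying Theorem \ref{pseff} to the non-pseudo-effective line bundle $K_X$ yields some $\alpha\in Mov(X)$ with $K_X\cdot\alpha<0$. Since $K_X=\det(\Omega^1_X)$, this means $\mu_\alpha(\Omega^1_X)=(K_X\cdot\alpha)/n<0$, and $\mu_{\alpha,\min}(\Omega^1_X)\leq\mu_\alpha(\Omega^1_X)<0$ follows from the defining property of the Harder–Narasimhan filtration.

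The central implication is $(4)\Rightarrow(1)$, and it is the one that really uses the machinery built up in the paper. Given $\mu_{\alpha,\min}(\Omega^1_X)<0$, Theorem \ref{mu}.1 converts this into $\mu_{\alpha,\max}(TX)>0$. I would then set $\cF:=TX^{\alpha,\max}\subset TX$; by construction $\cF$ is $\alpha$-semistable with $\mu_{\alpha,\min}(\cF)=\mu_\alpha(\cF)=\mu_{\alpha,\max}(TX)>0$, while the HN filtration of $TX$ forces $\mu_{\alpha,\max}(TX/\cF)<\mu_\alpha(\cF)$. Positivity of $\mu_{\alpha,\min}(\cF)$ then automatically upgrades this to $2\mu_{\alpha,\min}(\cF)>\mu_{\alpha,\max}(TX/\cF)$, which is precisely the hypothesis of Theorem \ref{algrc}. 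That theorem delivers an algebraic foliation $\cF$ whose leaf closures are rationally connected; since $\cF$ has positive rank, these closures cover $X$ and contain rational curves through every generic point, so $X$ is uniruled.

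The only step where some care is needed is the passage from $\mu_{\alpha,\min}(\cF)>\mu_{\alpha,\max}(TX/\cF)$ to the factor-two inequality required by Corollary \ref{miy}; this is purely arithmetic given positivity of $\mu_{\alpha,\min}(\cF)$. Everything else is a direct citation. Since $(1)\Rightarrow(2)\Rightarrow(3)\Rightarrow(2)\Rightarrow(4)\Rightarrow(1)$ closes the cycle, the four conditions are equivalent.
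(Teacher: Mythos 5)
Your proof is correct and follows essentially the same route as the paper, which proves the corollary by the cycle $1\Rightarrow 2,3\Rightarrow 4\Rightarrow 1$, citing Corollary \ref{mumin>0} for $2,3\Rightarrow 4$ and Theorem \ref{algrc} for $4\Rightarrow 1$. Your unpacking of $2\Rightarrow 4$ as a direct slope computation and your verification via Corollary \ref{miy} that $TX^{\alpha,\max}$ is a foliation are exactly the content of Corollary \ref{mumin>0}, so the two arguments coincide in substance.
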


We have: 1 $\Longrightarrow$ 2,3 $\Longrightarrow$ 4 $\Longrightarrow$ 1 (the last two implications by Corollaries \ref{mumin>0} and \ref{algrc} respectively).

\begin{corollary}\label{rc} The following are equivalent:

1. $X$ is rationally connected

2. $\Omega^p_X$ is not pseudo-effective for any $p>0$.

3. $h^0(X,Sym^m(\Omega^1_X)\otimes A)=0$ for any $p>0$, $A$ ample and $m$ large.

4. $\mu_{\alpha,max}(\Omega^1_X)<0$ for some $\alpha\in Mov(X)$.
\end{corollary}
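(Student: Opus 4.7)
The plan is to mirror the proof of Corollary~\ref{ur}, establishing the cycle $(4)\Rightarrow (1)\Rightarrow (2)\Leftrightarrow (3)$ and closing it with $(2)\Rightarrow (1)$ via the MRC fibration.

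I would first handle the most direct step, $(4)\Rightarrow (1)$. By claim~1 of Theorem~\ref{mu}, the hypothesis $\mu_{\alpha,max}(\Omega^1_X)<0$ is equivalent to $\mu_{\alpha,min}(TX)>0$. Since $\cF:=TX$ is trivially a foliation on $X$ whose unique leaf is $X$ itself, Theorem~\ref{algrc} applies directly and yields that this leaf has rationally connected closure, i.e.\ that $X$ is rationally connected.

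Next, $(1)\Rightarrow (2),(3),(4)$ are all handled by very free rational curves. If $X$ is rationally connected, through a general point passes a very free $\phi\colon\bP^1\to X$ with $\phi^*TX\cong\bigoplus_i\cO(a_i)$, $a_i\ge 1$, and dually $\phi^*\Omega^p_X$ decomposes into line bundles of degree $\le -p$. The restriction of any section of $Sym^{[m]}(\Omega^p_X)\otimes A^j$ to such a $\bP^1$ lies in a bundle whose summands all have degree $\le -mp+j\deg(\phi^*A)$, negative for $m>c(A,p)\cdot j$, so the section vanishes on every such curve and hence globally; this gives $(2)$ and its equivalent $(3)$. Setting $\alpha:=[\phi(\bP^1)]\in Mov(X)$, any saturated subsheaf $\cE\subset\Omega^1_X$ of rank $r$ pulls back for generic $\phi$ to a locally free $\phi^*\cE\hookrightarrow\bigoplus_i\cO(-a_i)$ on $\bP^1$ whose line-bundle summands all have degree $\le -1$; hence $\deg\phi^*(\det\cE)\le -r$ and $\mu_\alpha(\cE)\le -1$, giving $\mu_{\alpha,max}(\Omega^1_X)\le -1<0$, which is $(4)$.

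To close the circle I would prove $(2)\Rightarrow (1)$ via the MRC fibration. Suppose $X$ is not rationally connected: on a suitable birational model, Theorem~\ref{trc} gives a regular MRC fibration $r\colon X\to R$ with $d:=\dim R\ge 1$ and $R$ not uniruled. By Theorem~\ref{tur}, $K_R$ is pseudo-effective; since $r_*$ sends $Mov(X)$ into $Mov(R)$, Theorem~\ref{pseff} implies $r^*K_R$ is pseudo-effective on $X$. The natural map $r^*\Omega^d_R\hookrightarrow\Omega^d_X$ realises $r^*K_R$ as a pseudo-effective line subsheaf of $\Omega^d_X$; passing to $m$-th symmetric powers gives $(r^*K_R)^{\otimes m}\hookrightarrow Sym^{[m]}(\Omega^d_X)$, which forces nonzero sections of $Sym^{[m]}(\Omega^d_X)\otimes A^j$ for pairs $(m,j)$ with $m/j$ arbitrarily large, contradicting $(2)$ at $p=d$. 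The main obstacle lies here: one must carefully set up the MRC fibration on a birational model where it is actually regular and the inclusion $r^*\Omega^d_R\hookrightarrow\Omega^d_X$ is an honest injection of sheaves, and then check that a pseudo-effective line subsheaf genuinely propagates pseudo-effectivity to the ambient reflexive sheaf (which is comparatively easy here, since on smooth $X$ the sheaf $\Omega^d_X$ is locally free and $Sym^m$ preserves the inclusion from the line bundle).
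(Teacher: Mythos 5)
Your proposal is correct and follows essentially the same route as the paper: $1\Rightarrow 2,3$ (and $4$) via (very) free rational curves covering $X$, the converse to $1$ via the pseudo-effective line bundle $r^*(K_R)\subset\Omega^{\dim R}_X$ coming from the MRC fibration, and $4\Rightarrow 1$ by applying Theorem~\ref{algrc} to $\cF=TX$. The only cosmetic difference is that you close the cycle through $(2)\Rightarrow(1)$ and prove $(1)\Rightarrow(4)$ directly, whereas the paper phrases the MRC step as $3\Rightarrow 4$; the underlying arguments are identical.
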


\begin{proof} The implications 1 $\Longrightarrow$ 2,3 are easy by taking a rational curve with ample normal bundle in $X$, 3 $\Longrightarrow$ 4 is seen by contradiction, from Theorem \ref{trc}, taking $L:=r^*(K_R)\subset \Omega^p_X$ if $p:=dim(R)>0$ is a pseudo-effective line bundle, thus such that $0<h^0(X, m.L+A)\leq h^0(X, Sym^m(\Omega^p_X)\otimes r^*(A))$ for infinitely many $m's$. That 4 $\Longrightarrow$ 1 follows from Theorem \ref{algrc} applied to $\cF=TX$.\end{proof}

\begin{remark}1. Theorem \ref{tur} (resp. Corollary \ref{rc}) claims that $X$ is uniruled (resp. rationally connected) if and only if there is a covering family of curves (possibly of large genus) on which $det(TX)$ (resp. $TX)$ is ample. 

2.  The `uniruledness conjecture' claims that $X$ is uniruled if $K_X$ is not $\Bbb Q$-effective. It implies that in the above two corollaries, the ample $A$ could be removed in the assertions 3, and that `pseudo-effective' could be replaced by `$\Bbb Q$-effective' in assertions 2. 
\end{remark}

\begin{corollary}\label{ur'}(\cite{CPa},\cite{CPe}) Let $s$ be a nonzero section of $\otimes^mTX\otimes L$, for some $m>0$ and $L$ a line bundle with $c_1(L)=0$. Assume that $s$ vanishes somewhere on $X$. Then $X$ is uniruled (i.e: covered by rational curves).
\end{corollary}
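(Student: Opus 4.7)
I proceed by contradiction: assume $X$ is not uniruled and show that $s$ cannot vanish anywhere on $X$. By Corollary~\ref{ur} (the equivalences), non-uniruledness of $X$ is equivalent to $\mu_{\alpha,\max}(TX)\leq 0$ for every $\alpha\in\mathrm{Mov}(X)$. Applying Theorem~\ref{mu}(4), $\mu_{\alpha,\max}(T^{\otimes m}X)=m\,\mu_{\alpha,\max}(TX)\leq 0$, and since $c_1(L)\cdot\alpha=0$, tensoring by $L$ preserves slopes; hence $\mu_{\alpha,\max}(T^{\otimes m}X\otimes L)\leq 0$ for every movable $\alpha$.

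The section $s$ defines an injection $\mathcal{O}_X\hookrightarrow T^{\otimes m}X\otimes L$. Let $\cM\subset T^{\otimes m}X\otimes L$ denote its saturation: a reflexive rank-one subsheaf and hence, by smoothness of $X$, a line bundle. The cokernel of $\mathcal{O}_X\hookrightarrow\cM$ is supported on the codimension-one part $D\geq 0$ of the zero scheme $Z(s)$, so $\cM\cong\mathcal{O}_X(D)$ and $\mu_\alpha(\cM)=D\cdot\alpha$. The slope bound
\[
D\cdot\alpha \;=\; \mu_\alpha(\cM)\;\leq\;\mu_{\alpha,\max}(T^{\otimes m}X\otimes L)\;\leq\;0,
\]
applied to $\alpha=A^{n-1}$ with $A$ ample, forces $D=0$, since any nonzero effective divisor pairs strictly positively with $A^{n-1}$. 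Thus $s$ has no codimension-one zero.

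It remains to exclude a codimension-$\geq 2$ zero of $s$. For this I would pass to a birational modification $\pi:\tilde X\to X$---an iterated blow-up supported along $Z(s)$---with exceptional divisor $\Sigma$ along which $\pi^*s$ vanishes to some order $k\geq 1$. Using the natural inclusion $T\tilde X\hookrightarrow\pi^*TX$ from $d\pi$ (hence $T^{\otimes m}\tilde X\hookrightarrow\pi^*T^{\otimes m}X$, with cokernel supported on $\Sigma$), and tracking how the vanishing of $\pi^*s$ on $\Sigma$ interacts with this inclusion, one extracts, after a suitable twist, a nonzero section of $T^{\otimes m}\tilde X\otimes\tilde L$ still vanishing on $\Sigma$, for a line bundle $\tilde L$ whose class is numerically trivial against some movable class $\tilde\alpha\in\mathrm{Mov}(\tilde X)$ pairing positively with $\Sigma$. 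The divisorial argument above then applies on $\tilde X$ with $\tilde\alpha$, yielding uniruledness of $\tilde X$ and hence of $X$, since uniruledness is a birational invariant.

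The main obstacle is this codimension-$\geq 2$ reduction: the blow-up $\pi$ and the class $\tilde\alpha$ must be chosen so that $\pi^*s$ really does descend to a section of $T^{\otimes m}\tilde X\otimes\tilde L$ with a genuine divisorial zero and $\tilde L$ numerically trivial against $\tilde\alpha$. This requires careful use of the birational machinery of \S\ref{sslope} (in particular Lemma~\ref{birup}) and, at the deepest level, the foliation framework of Theorem~\ref{algcrit}. The divisorial case, by contrast, falls out immediately from the slope inequality for subsheaves.
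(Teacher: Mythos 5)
Your divisorial step is correct but it is not where the content of the statement lies: for $m\geq 1$ the bundle $\otimes^mTX\otimes L$ has rank $n^m\geq 2$, so the zero locus of $s$ is typically of codimension $\geq 2$ (e.g.\ a single point), in which case your saturated subsheaf $\cM$ is just $\cO_X$, of slope zero, and the slope inequality says nothing. The reduction you sketch for this main case would fail: the inclusion $T\tilde X\hookrightarrow \pi^*TX$ goes the wrong way, so $\pi^*s$, a section of $\pi^*(\otimes^mTX)\otimes\pi^*L$, does not give a section of $\otimes^mT\tilde X\otimes\pi^*L$; one only obtains one after twisting by $\cO_{\tilde X}(k\Sigma)$ for some $k\geq 0$, which destroys the numerical triviality of the twisting line bundle and contributes $+mk\,\Sigma\cdot\tilde\alpha$ with the wrong sign against the divisorial vanishing you hope to create. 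Moreover, for $\tilde\alpha=\pi^*\alpha$ every exceptional divisor pairs to zero, while for any other movable $\tilde\alpha$ you no longer control $\mu_{\tilde\alpha,\max}(T\tilde X)$ from the non-uniruledness of $X$.

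The paper's proof avoids all of this by adapting the movable class to the zero of $s$, rather than adapting the subsheaf to a fixed class: take $\alpha=[C_t]$ for a covering family of ample (complete-intersection) curves all constrained to pass through a point $x_0$ with $s(x_0)=0$. For generic $t$ the restriction $s|_{C_t}$ is a nonzero section of $(\otimes^mTX\otimes L)|_{C_t}$ vanishing at $x_0$, so the rank-one subsheaf of $(\otimes^mTX\otimes L)|_{C_t}$ it generates has degree $\geq 1$; via restriction to the generic member of the family this yields $\mu_{\alpha,\max}(\otimes^mTX\otimes L)>0$, hence $\mu_{\alpha,\max}(TX)>0$ by Theorem \ref{mu} since $L\cdot\alpha=0$, i.e.\ $\mu_{\alpha,\min}(\Omega^1_X)<0$, and the implication $4\Rightarrow 1$ of Corollary \ref{ur} gives uniruledness. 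The idea you are missing is exactly this: vanishing in codimension $\geq 2$ is invisible to slopes with respect to a fixed class, but becomes divisorial, of degree at least one, on every curve of a family forced through the zero.
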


\begin{proof} It is sufficient to show that $\mu_{\alpha,max}(TX)>0$, or equivalently, that $\mu_{\alpha,max}(\otimes^mTX\otimes L)>0$ for some $\alpha\in Mov(X)$. Let $L$ be the subsheaf of rank one of $\otimes^mTX\otimes L$ generated by $s$, and let $\alpha$ be the geometrically movable class defined by a family of ample curves going through some point where $s$ vanishes. Thus $0<\mu_{\alpha}(L)\leq \mu_{\alpha,max}(\otimes^mTX\otimes L)$ as claimed.
\end{proof}

\begin{corollary}\label{amplenorm} Let $C\subset X$ be an irreducible projective curve such that $TX_{\vert C}$ is ample. Then
$X$ is rationally connected. 

\end{corollary}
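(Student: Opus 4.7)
The plan is to apply Theorem~\ref{algrc} to $\cF := TX$ itself, viewed as a (trivially integrable) foliation on the connected manifold $X$ whose single leaf is $X$. Once we produce a class $\alpha \in Mov(X)$ with $\mu_{\alpha,min}(TX) > 0$, the theorem says that the (automatically algebraic) leaf closure $X$ is rationally connected, which is the desired conclusion.

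The natural candidate is $\alpha := [C]$. Ampleness of $TX|_C$ passes to the quotient $N_{C/X}$, so $C$ has ample normal sheaf; by standard deformation theory this forces $C$ to move in an irreducible algebraic family $(C_t)_{t \in T}$ of curves covering $X$, all of numerical class $\alpha$. Hence $\alpha$ is geometrically movable and $\alpha \in Mov(X)$ (one may alternatively invoke Example~\ref{exmov}.2 applied to a suitable smooth/l.c.i.\ member of the family).

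To establish $\mu_{\alpha,min}(TX) > 0$, let $\cQ := TX_{\alpha,min}$ be the last quotient in the $\alpha$-Harder--Narasimhan filtration of $TX$. It is torsionfree, hence locally free off a Zariski-closed $S \subset X$ of codimension $\geq 2$. For generic $t \in T$, two open conditions hold simultaneously: $C_t$ avoids $S$ (since the total space of the family dominates $X$, the preimage of $S$ there cannot dominate $T$), and $TX|_{C_t}$ is ample (openness of ampleness of vector bundles in flat families of smooth curves, starting from $t_0$). For such $t$, $\cQ|_{C_t}$ is a locally free quotient of the ample bundle $TX|_{C_t}$, hence itself ample, and $(\det\cQ)|_{C_t} = \det(\cQ|_{C_t})$ has strictly positive degree. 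Therefore $\det(\cQ) \cdot \alpha = \det(\cQ) \cdot C_t > 0$, yielding $\mu_\alpha(\cQ) = \mu_{\alpha,min}(TX) > 0$ as required, so Theorem~\ref{algrc} applies and concludes the proof.

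The principal subtlety is the openness of ampleness of $TX|_{C_t}$ in the family; one may also argue directly on the original $C$, using that $\det(\cQ)\cdot C$ equals $\deg(\det(\cQ|_C))$ and that this is bounded below by the degree of the torsion-free quotient of $\cQ|_C$, which is still a locally free quotient of the ample bundle $TX|_C$ and hence has positive determinant.
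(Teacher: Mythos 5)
Your overall architecture is the same as the paper's (show $[C]\in Mov(X)$, show $\mu_{[C],min}(TX)>0$, conclude via Theorem \ref{algrc} / Corollary \ref{rc}.4), and your second step is sound --- in particular the fallback argument directly on $C$, comparing $\det(\cQ)\cdot C$ with the degree of the torsion-free quotient of $\cQ_{\vert C}$, is exactly what is needed. The gap is in the first step. It is \emph{not} true that a curve with ample normal bundle must move in a covering family: by Riemann--Roch, $\chi(C,N_{C/X})=\deg N_{C/X}+\mathrm{rk}(N_{C/X})(1-g)$ can be negative when $g$ is large, $h^1(C,N_{C/X})=h^0(C,\omega_C\otimes N_{C/X}^*)$ need not vanish for ample $N_{C/X}$, and rigid curves with ample normal bundle do exist. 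This is precisely the subtlety that the cited works of Ottem \cite{O} and Peternell \cite{P} address: such curves lie in the movable cone for \emph{numerical} reasons, not because they deform. Your alternative parenthetical (``apply Example \ref{exmov}.2 to a suitable member of the family'') does not repair this, since it presupposes the family whose existence is the unproven claim; and applying Example \ref{exmov}.2 to $C$ itself requires $C$ to be a local complete intersection, which the hypothesis does not grant.

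The paper's fix avoids deformation theory entirely. One takes the normalisation $C'\to C$ and the graph $C''\subset C'\times X$ of the induced map $C'\to X$; then $C''$ is a smooth curve with normal bundle $TX_{\vert C'}$, which is ample, so Example \ref{exmov}.2 (i.e.\ the BDPP characterisation: $\alpha\in Mov$ iff $\alpha\cdot D\geq 0$ for every irreducible effective divisor $D$, checked by noting that for $C''\subset D$ the line bundle $\cO(D)_{\vert C''}$ is a quotient of the ample normal bundle) gives $[C'']\in Mov(C'\times X)$; pushing forward by the projection to $X$ yields $[C]\in Mov(X)$. If you replace your deformation-theoretic paragraph by this graph construction (or simply by the BDPP inequality $D\cdot C\geq 0$ verified on the normalisation), the rest of your argument goes through and coincides with the paper's proof.
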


This was observed in \cite{Gou} by a different approach. 

\begin{proof} Apply Theorem \ref{algrc} , observing that $[C]\in Mov(X)$, by considering the graph $C"$ of the embedding $j:C'\subset X$ in $C'\times X,C'$ the normalisation of $C$, which has normal bundle $TX_{\vert C'}$, so that Corollary \ref{rc} applies, and $[C"]\in Mov(C'\times X)$, so that $[C]=p_*([C"])\in Mov(X)$.

\end{proof}


\section{Birational stability of the cotangent bundle}\label{birst}

\begin{theorem}\label{qpseff} (\cite{CPa}) Assume that $K_X$ is pseudo-effective. Let $Q$ be a torsionfree quotient of $\otimes^m\Omega^1_X$, for some $m>0$. Then $det(Q)$ is pseudo-effective.
\end{theorem}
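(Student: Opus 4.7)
The plan is to argue by contradiction, reducing to uniruledness of $X$ via the Harder--Narasimhan filtration of $TX$ and the machinery already developed (Corollary \ref{miy}, Theorems \ref{mu-alg}, \ref{algrc}, \ref{tur}).

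Suppose $\det(Q)$ is \emph{not} pseudo-effective. By Theorem \ref{pseff}, there exists $\alpha \in \Mov(X)$ with $\det(Q).\alpha < 0$, and dividing by $\mathrm{rk}(Q)$ gives $\mu_\alpha(Q) < 0$. Since $Q$ is a nonzero torsionfree quotient of $E := \otimes^m \Omega^1_X$, Lemma \ref{slope}.1 gives $\mu_{\alpha,\min}(E) \leq \mu_\alpha(Q) < 0$. Now $E^* = \otimes^m TX$ as locally free sheaves, so by Theorem \ref{mu}.1 and \ref{mu}.4,
\[
\mu_{\alpha,\min}\bigl(\otimes^m \Omega^1_X\bigr) = -\mu_{\alpha,\max}\bigl(\otimes^m TX\bigr) = -m\,\mu_{\alpha,\max}(TX).
\]
The negativity above therefore forces $\mu_{\alpha,\max}(TX) > 0$.

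I would then set $\cF := TX^{\alpha,\max}$, the maximal $\alpha$-destabilising subsheaf of $TX$, which is $\alpha$-semistable and saturated in $TX$. By construction $\mu_{\alpha,\min}(\cF) = \mu_{\alpha,\max}(\cF) = \mu_{\alpha,\max}(TX) > 0$, and since $\cF$ is a piece of the $\alpha$-HN filtration of $TX$, the strict slope inequality between consecutive graded pieces gives
\[
2\,\mu_{\alpha,\min}(\cF) > \mu_{\alpha,\min}(\cF) > \mu_{\alpha,\max}(TX/\cF).
\]
Hence Corollary \ref{miy} applies and $\cF$ is a foliation. Theorem \ref{algrc} (which is exactly Theorem \ref{Mu-alg} stated up front) then yields that $\cF$ is algebraic and its leaves have \emph{rationally connected} closures.

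Since the closures of the leaves of $\cF$ cover $X$ and each such closure is rationally connected, through the generic point of $X$ there passes a rational curve, so $X$ is uniruled. By Theorem \ref{tur}, $K_X$ is not pseudo-effective, contradicting the hypothesis. This forces $\det(Q).\alpha \geq 0$ for every $\alpha \in \Mov(X)$, and another application of Theorem \ref{pseff} concludes that $\det(Q)$ is pseudo-effective.

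There is no genuinely hard step here: the proof is an orchestration of results already proved in the excerpt, the pivotal one being the combination of the slope bookkeeping in Theorem \ref{mu} with the algebraicity-plus-rational-connectedness output of Theorem \ref{algrc}. The only small point to verify carefully is the slope-arithmetic identity $\mu_{\alpha,\min}(\otimes^m\Omega^1_X) = -m\,\mu_{\alpha,\max}(TX)$, which is where the tensor structure of the cotangent bundle (as opposed to a general sheaf $E$) is essentially used; this is precisely the phenomenon flagged in the remark after Corollary \ref{cbig}.
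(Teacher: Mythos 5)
Your proof is correct and is essentially the paper's argument: the paper simply cites Corollary \ref{ur} (the equivalence of pseudo-effectivity of $K_X$ with $\mu_{\alpha,\min}(\Omega^1_X)\geq 0$ for all $\alpha\in Mov(X)$) together with Theorem \ref{mu} to get $\mu_{\alpha,\min}(\otimes^m\Omega^1_X)=m\,\mu_{\alpha,\min}(\Omega^1_X)\geq 0$, and concludes by Lemma \ref{slope}.1. You reach the same point by contradiction, unfolding inline the proof of the implication $4\Rightarrow 1\Rightarrow 2$ of Corollary \ref{ur} (via $TX^{\alpha,\max}$, Corollary \ref{miy}, Theorem \ref{algrc} and Theorem \ref{tur}), which is valid but reproves a result the paper already has on the shelf.
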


\begin{proof} $K_X$ being pseudo-effective, $\mu_{\alpha,min}(\Omega^1_X)\geq 0,\forall \alpha\in Mov(X)$, by Corollary \ref{ur}. By Theorem \ref{mu}, $\mu_{\alpha,min}(\otimes^m\Omega^1_X)=m.\mu_{\alpha,min}(\Omega^1_X)\geq 0$, $\forall \alpha\in Mov(X)$, which means that any torsionfree quotient of $\otimes^m(\Omega^1_X)$ is pseudo-effective.
\end{proof}

\begin{theorem}\label{Lpseff}(\cite{CPa}) Let $L$ a pseudo-effective line bundle on $X$. If there exists a nonzero sheaf morphism $\lambda: L\to \otimes^m\Omega^1_X\otimes K_X^{\otimes p}$ for some $m\geq 0,p>0$, $K_X$ is pseudo-effective. 
\end{theorem}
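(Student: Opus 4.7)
The plan is to argue by contradiction: assume $K_X$ is not pseudo-effective and derive a contradiction from the nonzero morphism $\lambda$ together with the pseudo-effectivity of $L$. By Corollary \ref{ur}, $X$ is then uniruled, and I first pass to a birational model $\pi:X'\to X$ (still denoted $X$) on which the MRC fibration $r:X\to R$ of Theorem \ref{trc} is a regular morphism with $R$ smooth. Writing $K_{X'}=\pi^*K_X+E$ with $E\geq 0$ exceptional, the pullback of $\lambda$ composed with $\pi^*\Omega^1_X\to\Omega^1_{X'}$ yields on $X'$ a nonzero morphism $L'\to\otimes^m\Omega^1_{X'}\otimes K_{X'}^{\otimes p}$, where $L':=\pi^*L+pE$ is still pseudo-effective. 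Since $X$ remains uniruled, $K_{X'}$ is still not pseudo-effective and $\dim R<\dim X$.

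Next, I restrict to a general fibre $F=r^{-1}(t)$, which is smooth and rationally connected. The normal bundle $N_{F/X}\simeq(r^*T_R)|_F\simeq\cO_F^{\oplus\dim R}$ is trivial, adjunction gives $K_X|_F=K_F$, and the conormal sequence reads
\[
0\longrightarrow\cO_F^{\oplus\dim R}\longrightarrow \Omega^1_X|_F\longrightarrow \Omega^1_F\longrightarrow 0.
\]
By Corollary \ref{rc}, rational connectedness of $F$ supplies $\beta\in Mov(F)$ with $\mu_{\beta,\max}(\Omega^1_F)<0$, hence $K_F\cdot\beta=(\dim F)\mu_\beta(\Omega^1_F)\leq(\dim F)\mu_{\beta,\max}(\Omega^1_F)<0$. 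The short-exact-sequence estimate $\mu_{\beta,\max}(B)\leq\max\{\mu_{\beta,\max}(A),\mu_{\beta,\max}(C)\}$ (proved by decomposing the maximal destabilizing of $B$ via its intersection with $A$) applied to the conormal sequence gives $\mu_{\beta,\max}(\Omega^1_X|_F)\leq\max\{0,\mu_{\beta,\max}(\Omega^1_F)\}=0$, and Theorem \ref{mu} then yields
\[
\mu_{\beta,\max}\!\bigl(\otimes^m\Omega^1_X|_F\otimes K_X^{\otimes p}|_F\bigr)=m\,\mu_{\beta,\max}(\Omega^1_X|_F)+p\,(K_F\cdot\beta)<0.
\]

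On the source side, curves of class $\beta$ sweep out $F$ and, as $F$ varies in $R$, sweep out $X$; the resulting total class $\beta_X\in Mov(X)$ satisfies $L\cdot\beta_X=L|_F\cdot\beta$ by the projection formula, so pseudo-effectivity of $L$ forces $\mu_\beta(L|_F)=L|_F\cdot\beta\geq 0$. Since $\mu_{\beta,\min}(L|_F)\geq 0>\mu_{\beta,\max}(\otimes^m\Omega^1_X|_F\otimes K_X^{\otimes p}|_F)$, Lemma \ref{slope}.2 forces $\lambda|_F=0$ for generic $F$; as these fibres sweep $X$, this gives $\lambda\equiv 0$, contradicting the hypothesis. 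The delicate point is the birational reduction together with rigorously producing the movable class $\beta_X\in Mov(X)$ from $\beta\in Mov(F)$ and the matching projection-formula identity; once the MRC-based setup is in place, the slope bookkeeping based on Theorem \ref{mu} and Lemma \ref{slope} is routine.
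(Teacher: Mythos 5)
Your proof is correct, but it is organized quite differently from the paper's. The paper proves Theorem \ref{Lpseff} by induction on $n=\dim X$: assuming $K_X$ not pseudo-effective, it takes $\alpha\in Mov(X)$ with $K_X\cdot\alpha<0$ and the foliation $\cF=TX^{\alpha,max}$, which by Theorem \ref{algrc} is algebraic with rationally connected leaf closures, yielding a fibration $f:X\to Z$; if $\dim Z=0$ it concludes by the single slope computation $0\le L\cdot\alpha\le -m\,\mu_{\alpha,min}(TX)+p\,K_X\cdot\alpha<0$, and if $\dim Z>0$ it restricts $\lambda$ to a general fibre $X_z$, extracts from the filtration of $(\otimes^m\Omega^1_X\otimes K_X^{\otimes p})|_{X_z}$ a nonzero morphism $L_{X_z}\to\otimes^{j}\Omega^1_{X_z}\otimes K_{X_z}^{\otimes p}$, and invokes the induction hypothesis to conclude that $K_{X_z}$ is pseudo-effective, contradicting Corollary \ref{KXz<0}. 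You avoid the induction entirely by working on a fibre $F$ of the MRC and choosing the movable class to be supported on the fibres, so that $p\,K_F\cdot\beta<0$ makes even the ``horizontal'' graded piece (the one where $\Omega^1_X|_F$ contributes only the trivial conormal factor) strictly negative --- this is exactly where $p>0$ enters, and it plays the role of the paper's base case in one stroke. What your route buys is a one-step, non-inductive argument whose mechanism is transparent; its cost is that the whole weight falls on producing $\beta_X\in Mov(X)$ with $r_*\beta_X=0$ whose restriction to a general fibre computes the right intersection numbers. For that you should take $\beta$ to be the class of an actual covering family of very free rational curves in the fibres (an arbitrary element of the closed cone $Mov(F)$ need not be swept out by a single algebraic family), which is precisely the construction the paper carries out in the proof of Lemma \ref{l} for Corollary \ref{birstab'}, so this point is standard in context. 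The paper's induction avoids constructing such a class but must prove the statement in all dimensions simultaneously and lean on Corollary \ref{KXz<0} to close the loop. Your slope bookkeeping via Theorem \ref{mu} and Lemma \ref{slope}(2), and the birational reduction replacing $L$ by $\pi^*L+pE$, are handled correctly.
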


\begin{proof} We proceed by induction on $n=dim(X)$, the case $n=1$ being obvious. Assume $K_X$ is not pseudo-effective, there then exists  $\alpha\in Mov(X)$ such that $K_X.\alpha<0$, and $\cF\subset TX$ a foliation such that $\mu_{\alpha,min}(\cF)>0$. Thus $\cF$ is algebraic, of the form $Ker(df)^{sat}$ for some fibration $f:X\to Z$ with rationally connected fibres, by Theorem \ref{algrc}. 

If $dim(Z)=0$, we get $\cF=TX$, and so: $\mu_{\alpha,min}(TX)>0$, which implies that $0\leq L.\alpha\leq \mu_{\alpha,max}((\otimes^m\Omega^1_X)\otimes K_X^{\otimes p})=-m.\mu_{\alpha,min}(TX)+p.K_X.\alpha<0$. Contradiction. 

If $dim(Z)>0$, then $dim(X_z)<n$ and we may apply induction. Let $\lambda_z:=\lambda_{\vert X_z}: L_z\to (\otimes^m\Omega^1_X\otimes K_X^{\otimes p})_{\vert X_z}$ be the restriction. The exact sequence on $X_z$: $0\to T\to \Omega^1_{X\vert X_z}\to \Omega^1_{X_z}\to 0$, with $T$ a trivial bundle of rank $dim(Z)$, induces a filtration on $(\otimes^m\Omega^1_X\otimes K_X^{\otimes p})_{\vert X_z}$ with graded pieces isomorphic to $T^{\otimes (m-j)} \otimes^{j}\Omega^1_{X_z}\otimes K_{X_z}^{\otimes p}$, and we thus get for some $j\in \{0,\dots,m\}$ a nonzero sheaf morphism $L_{X_z}\to \otimes^{j}\Omega^1_{X_z}\otimes K_{X_z}^{\otimes p}$. The induction then implies that $K_{X_z}$ is pseudo-effective, since $L_{X_z}$ is, like $L$ on $X$, pseudo-effective on $X_z$, for $z$ general in $Z$ (i.e: ouside countably many strict Zariski-closed subsets). This contradicts the non-pseudoeffectivity of $K_X$.\end{proof}

\begin{corollary}\label{big}(\cite{CPa}) Let $L$ a big line bundle on $X$. If there exists a nonzero sheaf morphism $L\to \otimes^m\Omega^1_X$ for some $m>0$, $K_X$ is big. 
\end{corollary}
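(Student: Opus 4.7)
My plan is to argue by induction on $n=\dim X$, after first reducing the big $L$ to an ample line bundle. In dimension one, a nonzero morphism $L\to K_X^{\otimes m}$ between line bundles on a curve is automatically injective, so $mK_X-L$ is effective and the bigness of $L$ transfers to $K_X$. For the inductive step with $n>1$, Kodaira's lemma for big line bundles provides $q>0$, an ample $A$ and an effective divisor $E$ with $qL\simeq A+E$. Composing the injection $A\hookrightarrow qL$ induced by a section of $\cO_X(E)$ with the $q$-fold tensor power $L^{\otimes q}\to \otimes^{mq}\Omega^1_X$ of the given morphism produces a nonzero morphism $A\hookrightarrow \otimes^{mq}\Omega^1_X$; relabelling $mq$ as $m$, it suffices to establish the statement when $L=A$ is ample.

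Argue by contradiction and assume $K_X$ is not big. BDPP duality (Theorem \ref{pseff}) then furnishes $\alpha_0\in Mov(X)\setminus\{0\}$ with $K_X.\alpha_0\leq 0$. From $A\hookrightarrow\otimes^m\Omega^1_X$ and Theorem \ref{mu} one reads off
\[0<A.\alpha_0\leq\mu_{\alpha_0,\max}(\otimes^m\Omega^1_X)=-m\,\mu_{\alpha_0,\min}(TX),\]
so $\mu_{\alpha_0,\min}(TX)<0$, while $\mu_{\alpha_0}(TX)=-K_X.\alpha_0/n\geq 0$. Hence $TX$ is not $\alpha_0$-semistable, and the maximal destabilising subsheaf $\cF:=TX^{\alpha_0,\max}\subsetneq TX$ satisfies $\mu_{\alpha_0,\min}(\cF)=\mu_{\alpha_0,\max}(TX)>0$. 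By Theorems \ref{mu-alg} and \ref{algrc}, $\cF$ is then an algebraic foliation whose leaves have rationally connected closures; on a suitable birational model (where Lemma \ref{birup} preserves the slope positivity) it arises from a regular fibration $f:X\to Z$ with $\dim Z\geq 1$ and rationally connected generic fibre $X_z$.

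The heart of the argument is to restrict $A\hookrightarrow\otimes^m\Omega^1_X$ to a generic fibre $X_z$. The short exact sequence
\[0\to f^*\Omega^1_Z|_{X_z}\to\Omega^1_X|_{X_z}\to\Omega^1_{X_z}\to 0,\]
whose left-hand sheaf is trivial of rank $\dim Z$, induces on $\otimes^m\Omega^1_X|_{X_z}$ a filtration whose graded pieces are direct sums of copies of $\otimes^j\Omega^1_{X_z}$ for $0\leq j\leq m$. The restricted morphism factors non-trivially through some graded piece, yielding a nonzero $A|_{X_z}\to\otimes^j\Omega^1_{X_z}$. The case $j=0$ is excluded because $A|_{X_z}$ is big and $X_z$ has positive dimension, so $H^0(X_z,(A|_{X_z})^{-1})=0$. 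For $j\geq 1$, the inductive hypothesis applied to $X_z$ (of strictly smaller dimension, with the big line bundle $A|_{X_z}$) forces $K_{X_z}$ to be big, contradicting the rational connectedness of $X_z$.

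The main obstacle I expect is the book-keeping around passing to a birational model on which $f$ is regular: the ampleness of $A$ weakens to mere bigness after pullback, so the exclusion of $j=0$ must rely on bigness alone rather than ampleness, and one must check that the inductive hypothesis applies to a smooth birational model of the generic fibre (whose rational connectedness is a birational invariant). Lemma \ref{birup} ensures the Harder--Narasimhan slope inequalities used to construct $\cF$ persist under the pullback, so that the inductive step closes cleanly.
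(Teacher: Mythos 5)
Your argument is correct, but it follows a genuinely different route from the paper's. The paper proves Corollary \ref{big} in two short steps that lean on results already established: first, since $L$ is big, $kL+K_X$ is effective for large $k$, so $-K_X$ maps into $kL$ and hence (composing with $\lambda^{\otimes k}$) $\cO_X$ injects into $\otimes^{mk}\Omega^1_X\otimes K_X$; Theorem \ref{Lpseff} then gives that $K_X$ is pseudo-effective. Second, taking $L$ saturated with quotient $Q$, one writes $N.K_X=\det(\otimes^m\Omega^1_X)=L+\det(Q)$, and Theorem \ref{qpseff} (valid now that $K_X$ is pseudo-effective) makes $\det(Q)$ pseudo-effective, so $N.K_X$ is big as the sum of a big and a pseudo-effective class. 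You instead bypass both \ref{Lpseff} and \ref{qpseff} and run the dimension induction directly on the bigness statement, essentially inlining the mechanism of the proof of Theorem \ref{Lpseff}: Harder--Narasimhan destabilisation of $TX$ against a movable class on which $K_X$ is nonpositive, algebraicity and rational connectedness of the resulting foliation (Theorems \ref{mu-alg}, \ref{algrc}), and the filtration of $\otimes^m\Omega^1_X|_{X_z}$ to restrict the injection to a fibre. This is self-contained and makes the geometric mechanism visible, at the cost of being longer and of requiring the slightly sharper duality statement that a divisor class positive against every nonzero element of $Mov(X)$ is big (the interior form of Theorem \ref{pseff}, standard from cone duality but not literally stated in the paper). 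Two small points to tighten: the reduction to $L$ ample is not actually needed, since a big $L$ already satisfies $L.\alpha>0$ for every nonzero movable $\alpha$ (write $qL=A+E$ and use $E.\alpha\geq 0$); and the generic fibre of the regular model $f:X'\to Z$ is already smooth, so no further birational model of the fibre is required before invoking the inductive hypothesis.
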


\begin{proof} We may assume that $L$ is saturated, let $Q:=(\otimes^m\Omega^1_X/L)$. If $K_X$ is pseudo-effective, $det(\otimes^m\Omega^1_X)=N.K_X=L+det(Q)$. Since $L$ is big and $det(Q)$ is pseudo-effective by Theorem \ref{qpseff}, $N.K_X$, and so $K_X$, is big.

We show that $K_X$ is pseudo-effective, which will imply the claim. Since $L$ is big, $k.L+K_X$ is effective for some large $k>0$, and $-K_X$ thus admits a nonzero morphism in $kL=(kL+K_X)-K_X$, and so by composition with $\lambda^{\otimes k}$, $-K_X$ maps nontrivially into $\otimes^{mk}\Omega^1_X$. Equivalently, $\mathcal{O}_X$ injects into $\otimes^{mk}\Omega^1_X\otimes K_X$. Theorem \ref{Lpseff} implies that $K_X$ is pseudo-effective.
\end{proof}

\begin{remark}\label{rkod} Let $X:=\Bbb P^k\times Z$, where $k>0$, and $Z$ is of general type and dimension $(n-k)\geq 0$. Then $K_X$ is not pseudo-effective, although $\Omega^{n-k}_X\subset \Omega^{1 \otimes (n-k)}_X$ contains $L:=p^*(K_Z)$, a line bundle of Kodaira dimension $0\leq (n-k)\leq (n-1)$. The assumption that $L$ is big cannot be weakened to any smaller Kodaira dimension in order to imply the pseudoeffectivity of $K_X$. 
\end{remark}

The argument used in Theorem \ref{qpseff} however extends to arbitrary numerical dimensions when $K_X$ is assumed to be pseudo-effective.

\begin{definition} Let $L, P, A$ be  line bundles on $X$, $A$ ample. 

1. $\kappa(X,L):=max\{k\in \Bbb Z\vert \overline{lim}_{m>0}(\frac{h^0(X,m.L)}{m^k})>0\}\in \{-\infty,0,\dots,n\}$

2. $\nu(X,L):=max\{A,k\in \Bbb Z\vert \overline{lim}_{m>0}(\frac{h^0(X,m.L+A)}{m^k})>0\}\in \{-\infty,0,\dots,n\}$
\end{definition}

The elementary properties of these two invariants are: 

\begin{proposition}\label{nuvsk} 0. $\nu(X,L)\geq \kappa(X,L)$

1. $L$ is pseudo-effective (resp. $\Bbb Q$-effective) iff $\nu(X,L)\geq 0$ (resp. $\kappa(X,L)\geq 0$.

2. $L$ is big (i.e: $\kappa(X,L)=n)$ iff $\nu(X,L)=n$.

3. If $P$ is pseudo-effective, $\nu(X,L+P)\geq \nu(X,L)$.

4. If $L$ is big and if $P$ is pseudo-effective, $L+P$ is big.
\end{proposition}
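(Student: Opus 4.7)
The approach is to treat the five claims in order, using the definitions directly for (0)--(2) and structural facts about the pseudo-effective cone for (3)--(4).

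For (0), I choose any ample $A$ with $h^0(X,A)\neq 0$ (true for $A$ sufficiently ample). Multiplication by a nonzero section of $A$ provides an injection $H^0(X,mL)\hookrightarrow H^0(X,mL+A)$, hence $h^0(X,mL+A)\geq h^0(X,mL)$; passing to $\limsup$ gives $\nu(X,L)\geq\kappa(X,L)$. Part (1) matches each condition to its definition: $\kappa\geq 0$ is literally ``some $mL$ has a nonzero section,'' i.e.\ $L$ is $\mathbb Q$-effective, while $\nu\geq 0$ (``$h^0(X,mL+A)\neq 0$ for infinitely many $m$ and some ample $A$'') coincides with the paper's definition of pseudo-effectivity for a line bundle, after one uses tensoring by $A^{j-1}$ to cover arbitrary $j>0$. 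For (2), the equivalence $L$ big $\Leftrightarrow \kappa=n$ is the standard volume characterization; combined with (0) this yields $\nu=n$ when $L$ is big, while $\nu\leq n$ follows from the elementary polynomial upper bound $h^0(X,F)=O(\deg_A(F)^n)$. Conversely, if $\nu(X,L)=n$ one has $h^0(X,mL+A)\geq cm^n$ for infinitely many $m$; rewriting this as $h^0(X,m(L+\tfrac{A}{m}))\geq cm^n$ and invoking continuity of the volume function on $N^1(X)_{\mathbb R}$ gives $\Vol(L)>0$, i.e.\ $L$ big.

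Claim (3) is the main point. The key input is that the interior of $\Psef(X)$ coincides with the big cone: since $P$ is pseudo-effective and $A$ is ample, $MP+A$ is big for every $M>0$, so $h^0(X,k(MP+A))\geq c\,k^n$ for $k\gg 0$. The plan is to combine such a nonzero section of a multiple of $MP+A$ with the lower bound $h^0(X,mL+A)\geq cm^{\nu(L)}$ on the infinite witness set of $\nu(X,L)$, via multiplication in the section ring, to produce a lower bound on $h^0(X,m(L+P)+A')$ of the right order. The main obstacle, and where genuine care is needed, is that the naive tensoring yields an ample shift that grows linearly in $m$ instead of a fixed ample $A'$; overcoming this requires aligning the witness set for $\nu(X,L)$ with an arithmetic progression on which bigness of $P+\tfrac{1}{M}A$ supplies sections with bounded shift, exploiting the freedom to enlarge both the auxiliary ample divisor and the multiplier $M$.

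Finally, (4) is a short consequence of Kodaira's lemma: write $kL=H+E$ with $H$ ample and $E$ effective. Then $k(L+P)=(H+kP)+E$, and since $H$ is ample while $kP$ is pseudo-effective, $H+kP$ lies in the interior of $\Psef(X)$, i.e.\ is big. Thus $k(L+P)$ is a big divisor plus an effective one, hence big, and so is $L+P$.
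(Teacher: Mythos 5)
The paper states Proposition \ref{nuvsk} with no proof at all (it is offered as a list of ``elementary properties''), so there is no in-paper argument to measure you against; your write-up must stand on its own. Parts 0, 1 and 4 are fine. The serious problem is part 3, which is in fact the only genuinely non-elementary assertion in the list, and it is precisely the point where you describe an intended strategy rather than carry one out. What you need is a nonzero section of $mP+A'$ for a \emph{fixed} ample $A'$ and for infinitely many $m$ in the witness set of $\nu(X,L)$: then multiplication by that section gives $h^0(m(L+P)+A+A')\geq h^0(mL+A)$ and you are done. But bigness of $MP+A$ only produces a section of $N\cdot(MP+A)$ for some $N=N(M)$, and raising such a section to the power $\sim m/(NM)$ needed to reach level $m$ in $P$ multiplies the ample shift by the same factor: the shift is always of size $\sim \frac{m}{M}A$, linear in $m$ for every fixed $M$. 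Restricting to an arithmetic progression inside the witness set does not change this, so the remedy you sketch cannot close the gap. The standard proof (Nakayama V.2.7, or Lazarsfeld via multiplier ideals) rests on the non-trivial lemma that for $P$ pseudo-effective and $A$ sufficiently ample --- e.g. $A=K_X+(n+2)H$ with $H$ very ample --- one has $h^0(X,mP+A)\neq 0$ for \emph{every} $m\geq 0$; this is proved with Nadel vanishing and Castelnuovo--Mumford regularity, not by multiplying sections. Some input of this kind is indispensable for part 3, and since the definition of $\nu$ maximizes over $A$, it is exactly what the statement allows you to use.

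There is also a smaller gap in the converse direction of part 2. From $h^0(mL+A)\geq cm^n$ for a single $m$ you cannot conclude $\Vol\bigl(L+\tfrac{1}{m}A\bigr)\geq c\cdot n!$: the volume of $mL+A$ is a limsup over all multiples $k(mL+A)$, and the $k=1$ term does not bound that limsup from below (a divisor can have many sections and zero volume, e.g. the pullback of a high-degree line bundle from a curve factor of a product). The repair is elementary and uses the fact that the bound holds for infinitely many $m$: choose a very ample $B$ with $B-2A$ effective and a general $D\in|B|$; since $h^0(D,(mL+A)|_D)=O(m^{n-1})$, the restriction map $H^0(mL+A)\to H^0((mL+A)|_D)$ has nonzero kernel for a large witness $m$, giving a section of $mL+A-B$ and hence of $mL-A$, so $L$ is big by Kodaira's lemma.
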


\begin{theorem}\label{birstab} Assume that $K_X$ is pseudo-effective. Let $L$ be a line bundle on $X$, and $L\subset \otimes^m\Omega^1_X$ a nonzero sheaf morphism. Then $\nu(X,L)\leq \nu(X,K_X)$.
\end{theorem}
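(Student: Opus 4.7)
The plan is to reduce Theorem \ref{birstab} to Theorem \ref{qpseff} via the short exact sequence determined by (the saturation of) $L$ inside $\otimes^m\Omega^1_X$, together with the monotonicity $\nu(X,L+P)\geq \nu(X,L)$ for pseudo-effective $P$ from Proposition \ref{nuvsk}.3.

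First I would replace $L$ by its saturation $L'\subset \otimes^m\Omega^1_X$. Since $L$ and $L'$ are both line bundles on the smooth $X$ and $L'/L$ is a torsion sheaf supported on some divisor, one has $L' = L + D$ for an effective divisor $D$; hence Proposition \ref{nuvsk}.3 gives $\nu(X,L')\geq \nu(X,L)$, so it suffices to prove the bound for the saturation. From now on I assume $L$ is saturated, so that $Q := \otimes^m\Omega^1_X / L$ is torsionfree.

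Next, since $K_X$ is pseudo-effective by hypothesis, Theorem \ref{qpseff} applied to the torsionfree quotient $Q$ yields that $\det Q$ is pseudo-effective. Taking determinants in
\[ 0\lra L\lra \otimes^m\Omega^1_X\lra Q\lra 0 \]
and using the standard identity $\det(V^{\otimes m}) = (\det V)^{\otimes m n^{m-1}}$ for an $n$-dimensional vector space $V$ (applied fiberwise to $V=\Omega^1_{X,x}$), I obtain the linear equivalence
\[ N K_X \;=\; L + \det Q, \qquad N := m\, n^{m-1}. \]
A second application of Proposition \ref{nuvsk}.3, this time to the pseudo-effective class $\det Q$, then gives $\nu(X,NK_X) = \nu(X,L+\det Q)\geq \nu(X,L)$.

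To conclude I would simply observe that $\nu(X,NK_X) = \nu(X,K_X)$: this is immediate from the definition, because for any positive integer $N$ the substitution $m\mapsto Nm$ only rescales the denominator $m^k$ by the constant $N^k$, and therefore does not change the threshold exponent $k$ at which the limsup becomes positive. No step of this plan is a genuine obstacle --- the real content sits in Theorem \ref{qpseff} and the Harder--Narasimhan machinery of \S\ref{sslope} that underlies it. The only point requiring care is the saturation step, which ensures that $Q$ is torsionfree so that Theorem \ref{qpseff} may be invoked; everything else is a direct assembly of previously established results.
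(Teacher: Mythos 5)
Your proof is correct and follows essentially the same route as the paper: saturate $L$, apply Theorem \ref{qpseff} to the torsionfree quotient $Q$ to get $\det Q$ pseudo-effective, write $N K_X=\det(L^{sat})+\det Q$, and conclude via the monotonicity of $\nu$ under addition of a pseudo-effective class (Proposition \ref{nuvsk}.3). You merely spell out a couple of points the paper leaves implicit, namely the explicit value $N=m\,n^{m-1}$ and the separate handling of the effective difference $L^{sat}-L$.
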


\begin{proof} Let $Q:=(\otimes^m \Omega^1_X)/L^{sat}$. Since $det(Q)$ is pseudo-effective by Theorem \ref{qpseff}, $N.K_X=det(L^{sat})+det(Q)$ is the sum of $L^{sat}$ and $P:=det(Q)$ which is pseudo-effective. 

Thus $\nu(X,K_X)=\nu(X,N.K_X)=\nu(X,L^{sat}+P)\geq \nu(X,L)$.
\end{proof}

\begin{remark} 1. The analogue of Proposition \ref{nuvsk}.(3) is false in general for the Kodaira dimension The proof of Theorem \ref{birstab} thus does not apply to prove $\kappa(X,L)\leq \kappa(X,K_X)$ in the same situation. However:

2. A central conjecture of birational geometry claims that $\nu(X,K_X)=\kappa(X,K_X)$ for any $X$. It thus implies, together with Theorem \ref{birstab}, that $\kappa(X,K_X)\geq \nu(X,L)$ for any $m>0, L\subset \otimes^m\Omega^1_X)$ if $\nu(X,K_X)\geq 0$. 

3. The Remark  \ref{rkod} shows that the assumption that $K_X$ is pseudo-effective cannot be removed or weakened. 
\end{remark}

When $K_X$ is not pseudo-effective (i.e. when $X$ is uniruled), we have the general version:

\begin{corollary}\label{birstab'} Let $f:X\to Z$ be the MRC of $X$. Let $L\subset \otimes^m\Omega^1_X$ be a subsheaf of rank $1$. Then $\nu(X,L)\leq \nu(Z,K_Z)$.
\end{corollary}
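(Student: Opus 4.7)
The plan is to reduce to Theorem~\ref{birstab}, applied to $X$ itself in the non-uniruled case and to the MRC base $Z$ in the uniruled case. If $X$ is not uniruled, then the MRC is trivial, $Z=X$, and $K_X=K_Z$ is pseudo-effective by Corollary~\ref{ur}, so the bound is Theorem~\ref{birstab} verbatim. Assume henceforth that $X$ is uniruled and replace $f:X\to Z$ by a neat model (Definition~\ref{neat}); by Theorem~\ref{trc}, $Z$ is not uniruled, so $K_Z$ is pseudo-effective and in particular $\nu(Z,K_Z)\ge 0$.

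The key input is the geometry of a general fiber $X_z$. The relative cotangent sequence restricts to $0\to T\to \Omega^1_X|_{X_z}\to \Omega^1_{X_z}\to 0$ with $T:=f^*\Omega^1_Z|_{X_z}$ trivial of rank $\dim Z$, and taking $m$-th tensor powers induces a filtration on $\otimes^m\Omega^1_X|_{X_z}$ whose graded pieces are direct sums of copies of $\otimes^j\Omega^1_{X_z}$ for $0\le j\le m$. Since $X_z$ is rationally connected, Corollary~\ref{rc} supplies $\alpha_z\in Mov(X_z)$ with $\mu_{\alpha_z,\max}(\Omega^1_{X_z})<0$, and Theorem~\ref{mu} then gives $\mu_{\alpha_z,\max}(\otimes^j\Omega^1_{X_z})<0$ for every $j>0$. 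If the image of $L|_{X_z}$ meets a graded piece with $j>0$, the induced line subsheaf of that piece has strictly negative $\alpha_z$-slope, forcing $L|_{X_z}\cdot\alpha_z<0$. Sweeping $X$ by the family of $\alpha_z$-curves in the fibers lifts $\alpha_z$ to a class $\alpha\in Mov(X)$ with $L\cdot\alpha<0$, so by Theorem~\ref{pseff}, $L$ is not pseudo-effective and $\nu(X,L)=-\infty$, which is trivially bounded.

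In the remaining case, the image of $L$ in the quotient $\otimes^m\Omega^1_X/\otimes^m(f^*\Omega^1_Z)^{sat}$ vanishes generically; being a torsion-free quotient of a line bundle it vanishes identically, so $L\subset \otimes^m(f^*\Omega^1_Z)^{sat}$ globally. Then $L|_{X_z}$ embeds in the trivial bundle $T^{\otimes m}|_{X_z}$, so $L|_{X_z}^{-1}$ is effective. If it is strictly effective, the same lifting trick applied to an ample family of curves in $X_z$ gives $L\cdot\alpha<0$, and we conclude as before. Otherwise $L|_{X_z}\cong \cO_{X_z}$ (using $\mathrm{Pic}^0(X_z)=0$ for rationally connected $X_z$), and by rigidity $L\equiv f^*L'+D$ for some $L'\in\mathrm{Pic}(Z)$ and a divisor $D$ supported on non-generic fibers. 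Descending the inclusion to a nonzero morphism $L'\to \otimes^m\Omega^1_Z$ on $Z$ and applying Theorem~\ref{birstab} to $Z$ (where $K_Z$ is pseudo-effective) yields $\nu(Z,L')\le \nu(Z,K_Z)$; together with the standard identity $\nu(X,f^*L')=\nu(Z,L')$ and control of the vertical divisor $D$, this gives the desired bound $\nu(X,L)\le \nu(Z,K_Z)$.

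The main obstacle will be this final descent: converting $L\subset \otimes^m(f^*\Omega^1_Z)^{sat}$ on $X$ into a line subsheaf $L'\subset \otimes^m\Omega^1_Z$ on $Z$ while preserving the numerical dimension. Both the discrepancy between $f^*\Omega^1_Z$ and its saturation (supported on $f^{-1}(D(f))$) and the vertical divisor $D$ must be absorbed, and the natural framework is a reflexive-pushforward analysis in the spirit of Proposition~\ref{reflex} applied to the neat model of $f$.
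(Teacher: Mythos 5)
Your reduction to the case $L\subset (f^*(\otimes^m\Omega^1_Z))^{sat}$ is essentially the paper's Lemma \ref{l}: the filtration of $\otimes^m\Omega^1_{X}{}_{\vert X_z}$ induced by the relative cotangent sequence, the negativity $\mu_{\alpha,max}(\otimes^j\Omega^1_{X_z})<0$ for $j>0$ on the rationally connected fibre (Corollary \ref{rc} plus Theorem \ref{mu}), and the conclusion that either $L.\alpha<0$, whence $\nu(X,L)=-\infty$, or $L$ lands in $(f^*(\otimes^m\Omega^1_Z))^{sat}$. Up to that point the two arguments coincide.

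The final step is a genuine gap, and you have flagged it yourself as ``the main obstacle''. Your plan to descend $L$ to a rank-one subsheaf $L'\subset\otimes^m\Omega^1_Z$ and invoke Theorem \ref{birstab} on $Z$ faces two unresolved problems: (i) $f_*\bigl((f^*(\otimes^m\Omega^1_Z))^{sat}\bigr)$ is in general strictly larger than $\otimes^m\Omega^1_Z$ along the discriminant, so the descended sheaf only embeds in $\otimes^m\Omega^1_Z\otimes\cO_Z(D')$ for some effective $D'$, to which Theorem \ref{birstab} does not apply as stated; and (ii) even granting $L=f^*L'+D$ with $D$ vertical, $D$ has no sign, and the needed inequality $\nu(X,f^*L'+D)\leq\nu(Z,L')$ is precisely the kind of statement that fails for $\nu$ without a positivity hypothesis --- Proposition \ref{nuvsk}.(3) only goes in the opposite direction, and only for pseudo-effective summands. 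The paper avoids the descent entirely: it stays on $X$ and proves (Lemma \ref{l'}) that $\det(\cQ)$ is pseudo-effective, where $\cQ$ is the quotient of $(f^*(\otimes^m\Omega^1_Z))^{sat}$ by $L$. The proof of that lemma is the idea missing from your proposal: if $\det(\cQ)$ were not pseudo-effective, one would have $\mu_{\alpha,min}((f^*(\Omega^1_Z))^{sat})<0$ for some $\alpha\in Mov(X)$, hence $\mu_{\alpha,max}((f^*(T_Z))^{sat})>0$; the maximal destabilizing subsheaf would then be an algebraic foliation with rationally connected leaf closures mapping nontrivially to $Z$ (Theorem \ref{algrc}), contradicting the non-uniruledness of the MRC base (Theorem \ref{trc}). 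With $\det(\cQ)$ pseudo-effective one concludes exactly as Theorem \ref{birstab} was deduced from Theorem \ref{qpseff}, writing $\det\bigl((f^*(\otimes^m\Omega^1_Z))^{sat}\bigr)=L+\det(\cQ)$ and applying Proposition \ref{nuvsk}.(3). You should replace your descent by this argument on $X$.
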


\begin{proof} We can and shall assume that $L$ is saturated in $\otimes^m\Omega^1_X:=E$. Let $X_z:=f^{-1}(z),z\in Z$ is a generic smooth fibre of $f$, and let $A$ be an ample line bundle on $X$, with $L_{X_z}:=L_{\vert X_z}$.

\begin{lemma}\label{l} $\nu(X,L)=-\infty$, unless $L_{X_z}\subset f^*(\otimes^m\Omega^1_{Z,z})$ over $X_z$. 
\end{lemma}

\begin{proof}The exact sequence $0\to f^*(\Omega^1_{Z,z})\to \Omega^1_{X\vert X_z}\to \Omega^1_{X_z}\to 0$ induces on $\otimes^{m}\Omega^1_{X\vert X_z}$ a filtration with graded pieces isomorphic, for $0\leq j\leq m$, to $G_j:=\otimes^{m-j} f^*(\Omega^1_{Z,z})\otimes(\otimes^j\Omega^1_{X_z})$. Since $X_z$ is rationally connected, we may choose $\alpha\in Mov(X)$ such that $f_*(\alpha)=0$, and $\alpha$ restricts to $X_z$ such that $\mu_{\alpha, max}(\otimes^{j}\Omega^1_{X_z})=j.\mu_{\alpha, max}(\Omega^1_{X_z})<0$, for any $j>0$. If $L$ is not contained in $\otimes^m(f^*(\Omega^1_{Z,z}))$, there exists $j>0$ and an injective sheaf map $L\to G_j$. We thus get: $\mu_{\alpha}(L)=L. \alpha \leq \mu_{\alpha,max}(G_j)=j.\mu_{\alpha,max}(\Omega^1_{X_z})<0$, since $\mu_{\alpha,max}(\otimes^{m-j}( f^*(\Omega^1_{Z,z})))=0$, the bundle $f^*(\Omega^1_{Z,z})$ being trivial. Thus $L.\alpha<0$, and $\nu(X,L)=-\infty$.
\end{proof}

From lemma \ref{l}, we deduce that $L\subset f^*(\otimes^m\Omega^1_Z)^{sat}\subset \otimes^m\Omega^1_X$. We can now conclude from the following Lemma \ref{l'}, analogous to Lemma \ref{qpseff}, using the very same arguments used to deduce Theorem \ref{birstab} from Lemma \ref{qpseff}.

\begin{lemma}\label{l'} Let $f:X\to Z$ be the MRC fibration of $X$, and let $\cG\subsetneq f^*(\otimes^m \Omega^1_Z)^{sat}$ be a saturated subsheaf. Then $det(\cQ)$ is pseudo-effective on $X$, where $\cQ$ is the quotient of $(f^*(\otimes^m \Omega^1_Z))^{sat}$ by $\cG$.
\end{lemma}

\begin{proof} We may and shall assume $Z$ to be smooth. By contradiction, if the conclusion does not hold, $\mu_{\alpha,min}(f^*(\otimes^m\Omega^1_Z)^{sat}))<0$ for some $\alpha\in Mov(X)$, and $-\mu_{\alpha,max}((f^*(T_Z))^{sat})=\mu_{\alpha,min}((f^*(\Omega^1_Z)^{sat}))<0$. There is then $\cF\subset (f^*(T_Z))^{sat}$, $\alpha$-semi-stable with $\mu_{\alpha,min}(\cF)>0$ which is an algebraic foliation on $X$ with closures of leaves rationally connected, and mapped non-trivially on $Z$ by $f$, contradicting the non-uniruledness of $Z$.
\end{proof} \end{proof}


\section{Shafarevich-Viehweg conjecture.}\label{SVconj}

Let $f:Y\to B$ be a projective holomorphic submersion with connected fibres between connected complex quasi-projective manifolds $Y,B$.

We assume that all fibres of $f$ have an ample canonical bundle, and that the `variation' of $f$, defined by: $Var(f)=rk(ks:TB\to R^1f_*(T_{Y/B}))$ is maximal, equal to $dim(B)$.

The situation considered by I. Shafarevich in 1962, was when $f$ was a non-isotrivial family of curves of genus at least $2$ on a curve $B$. His conjecture (proved by Parshin and Arakelov) was (formulated differently) that  $K_{\bar{B}}+D$ was big on $\bar{B}=B\cup D$, a compactification of $B$.

The conjecture of Shafarevich was extended by Viehweg in the following form: in the above situation, $K_{\bar{B}}+D$ is big if $\bar{B}$ is a smooth projective compactification of $B$ obtained by adding to $B$ a divisor $D$ of simple normal crossings.

In this situation, Viehweg-Zuo (\cite{VZ}) proved the existence of a big line bundle (the `Viehweg-Zuo sheaf') $L\subset \otimes^m \Omega^1_{\bar{B}}(Log D)$ which, combined with the following Theorem, implies Viehweg's conjecture (proved first in \cite{Taji}):

\begin{theorem} Let $X$ be a connected complex projective and $D$ a divisor of simple normal crossings on $X$. Assume that a big line bundle $L$ on $X$ has an injective sheaf morphism $L\subset \otimes^m \Omega^1_{X}(Log D)$. Then $K_X+D$ is big.
\end{theorem}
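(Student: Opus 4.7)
I would follow the proof of Corollary \ref{big} verbatim, replacing $\Omega^1_X$ by $\Omega^1_X(\log D)$ and $K_X$ by $K_X+D$ throughout. The identity $\det \Omega^1_X(\log D) = K_X+D$ ensures that $\det(\otimes^m \Omega^1_X(\log D)) = N \cdot (K_X+D)$ for $N := m\, n^{m-1}$ (with $n := \dim X$), which is the algebraic fact on which the original proof hinges.

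After saturating $L$ inside $E := \otimes^m \Omega^1_X(\log D)$ and setting $\cQ := E/L$, the argument reduces to two sub-claims: (i) if $K_X+D$ is pseudo-effective then $\det \cQ$ is pseudo-effective; and (ii) $K_X+D$ is itself pseudo-effective. Granting both, $N(K_X+D) = L + \det \cQ$ is the sum of a big and a pseudo-effective line bundle, hence big, so $K_X+D$ is big.

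Sub-claim (i) is the log version of Theorem \ref{qpseff}, and reduces to the inequality $\mu_{\alpha,min}(\Omega^1_X(\log D)) \geq 0$ for every $\alpha \in Mov(X)$ whenever $K_X+D$ is pseudo-effective --- the log analogue of Corollary \ref{ur}(4). For sub-claim (ii), since $L$ is big, $kL + (K_X+D)$ is effective for some $k \gg 0$, so $-(K_X+D)$ maps non-trivially into $kL$, and composing with the $k$-fold tensor of the given inclusion $L \hookrightarrow E$ yields a non-zero map $\cO_X \hookrightarrow \otimes^{mk}\Omega^1_X(\log D) \otimes (K_X+D)$; the log version of Theorem \ref{Lpseff} then delivers pseudo-effectivity of $K_X+D$.

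The main obstacle is verifying these two logarithmic extensions. Both rest on extending the algebraicity criterion (Theorem \ref{algcrit}) and the MRC machinery to the pair $(X,D)$: one replaces $TX$ by the log tangent sheaf $T_X(-\log D)$, studies foliations $\cF \subset T_X(-\log D)$, and uses a log MRC fibration whose generic fibre $(X_z,D_z)$ is log-rationally connected. The inductive step of the log Theorem \ref{Lpseff} then carries over by restricting to $X_z$ via the log residue sequence $0 \to T \to \Omega^1_X(\log D)_{\vert X_z} \to \Omega^1_{X_z}(\log D_z) \to 0$, where $T$ is trivial of rank $\dim Z$; the resulting graded pieces on $X_z$ are of the form $T^{\otimes(m-j)} \otimes (\otimes^j \Omega^1_{X_z}(\log D_z)) \otimes (K_{X_z}+D_z)^{\otimes p}$, and the induction hypothesis applies to each such piece. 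As the abstract notes, this orbifold/log extension is carried out in \cite{CPa}.
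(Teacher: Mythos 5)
Your proposal is correct and follows exactly the route the paper takes: the paper's own proof consists precisely of the remark that one repeats the argument of Corollary \ref{big} (hence of Theorems \ref{qpseff}, \ref{Lpseff} and \ref{algcrit}) with the (co)tangent bundles replaced by their logarithmic analogues, which is what you spell out. Your version is in fact more detailed than the paper's, since you make explicit the two sub-claims, the constant $N=m\,n^{m-1}$, and the log residue sequence needed in the inductive step.
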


This is the logarithmic analogue of Theorem \ref{big}. It is proved by extending to the logarithmic setting the theorems \ref{algcrit} and \ref{big}, which is done very straightforwardly, by replacing in the proofs the (co)tangent bundles by their $Log$-analogs.

The Viehweg's conjecture has been extended, and proofs given, in various directions (fibres of general type, or with semi-ample canonical bundle, or $B$ `special', and then $f$ is isotrivial). In another direction, inspired by Lang's conjectures, the Brody, Kobayashi, or Picard hyperbolicity of the corresponding moduli spaces have been proved in various situations. See \cite{D} for references on this topic.


\section{Numerically trivial foliations.}

Let $\cF\subset TX$ be a numerically trivial and pseudo-effective foliation, that is: such that $det(\cF).A^{n-1}=0$, for some ample line bundle $A$. Since $c_1(\cF)=0$, the pseudo-effectivity of $\cF$ and $\cF^*$ are equivalent. 

\medskip

When $X$ is an abelian variety and $\cF$ is linear (that is: $\cF=Ker(w),$ $w\in H^0(X,\Omega^{n-r}))$, $\cF$ is numerically flat and pseudo-effective,
but the leaves of $\cF$ are in general not algebraic. In such cases however, $\cF$ is flat (i.e: given by a linear representation of the fundamental group), and so semi-stable but not stable.

\medskip

The following question/conjecture was raised by Pereira-Touzet (who gave several affirmative answers, mostly in the corank-one case): if $\cF$ is $A$-stable and numerically trivial, is it algebraic?

\medskip

Theorem \ref{algcrit}, reduces this question to the non-pseudoeffectivity of $\cF^*$ when $c_1(\cF)=0$, and $\cF$ is $A$-stable.

A partial solution is given in the following:

\begin{theorem}\label{ccp} (\cite{HP},\cite{ccp}) Let $E$ be a vector bundle on $X$, assume that $c_1(E)=0\neq c_2(E)$, and that $Sym^m(E)$ is $A$-stable for some ample $A$ and some $m> r=rk(E)$. Then $E$ is not pseudo-effective\footnote{The proof works for $X$ compact K\"ahler, and $E$ a reflexive sheaf on $X$.}.
\end{theorem}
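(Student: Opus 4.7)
I would argue by contradiction, assuming $E$ is pseudo-effective. Set $F := \mathrm{Sym}^m(E)$: then $F$ is $A$-stable by hypothesis, pseudo-effective as a symmetric power of a pseudo-effective bundle, and has $c_1(F) = 0$ since $c_1(E) = 0$. The plan is to show that these three conditions on $F$ force $c_2(F) = 0$, and then to convert this into $c_2(E) = 0$ via a splitting-principle calculation.

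The analytic heart of the argument is precisely the content of the cited references \cite{HP}, \cite{ccp}: an $A$-stable, pseudo-effective reflexive sheaf with $c_1 = 0$ is Hermitian flat, so in particular all of its rational Chern classes vanish. The strategy is to equip $F$ with a Hermite--Einstein metric via Uhlenbeck--Yau (or Bando--Siu in the reflexive setting); its Einstein factor is zero, being proportional to $c_1(F)\cdot A^{n-1}/\mathrm{rk}(F) = 0$. Comparing this HE metric with the singular positive metric on $\mathcal{O}_{\mathbb{P}(F)}(1)$ supplied by pseudo-effectivity, a Bogomolov-type rigidity forces the HE curvature to vanish identically, so that $F$ becomes Hermitian flat and in particular $c_2(F) = 0$.

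It remains to deduce $c_2(E) = 0$ from $c_2(\mathrm{Sym}^m E) = 0$. Using the splitting principle with Chern roots $a_1, \ldots, a_r$ of $E$ satisfying $\sum_i a_i = 0$, the Chern roots of $\mathrm{Sym}^m E$ are $b_\alpha := \alpha_1 a_1 + \cdots + \alpha_r a_r$ over $\alpha \in \mathbb{Z}_{\geq 0}^r$ with $|\alpha| = m$. One checks that $\sum_\alpha b_\alpha = 0$ (recovering $c_1(F) = 0$), so that $c_2(F) = -\tfrac{1}{2}\sum_\alpha b_\alpha^2$; combining $\sum_i a_i^2 = -2\,c_2(E)$ with the symmetry $\sum_\alpha \alpha_i^2 = S$ and $\sum_\alpha \alpha_i \alpha_j = T$ (for $i\neq j$) independent of the indices, one obtains
\[
c_2(\mathrm{Sym}^m E) \;=\; \tfrac{1}{2}\Bigl(\sum_{|\alpha|=m}(\alpha_1-\alpha_2)^2\Bigr)\cdot c_2(E).
\]
The scalar coefficient is strictly positive for $r \geq 2$ and $m \geq 1$, so $c_2(F) = 0$ forces $c_2(E) = 0$, contradicting the hypothesis.

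The main obstacle is certainly the analytic step, namely that $A$-stability, pseudo-effectivity and $c_1 = 0$ together force Hermitian flatness; the role of the hypothesis $m > r$ is to make $\mathrm{Sym}^m E$ stable enough for this rigidity to apply. The preservation of pseudo-effectivity under symmetric powers and the Chern-class identity above are, by contrast, essentially formal bookkeeping.
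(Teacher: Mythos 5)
Your reduction steps are fine: $F:=\Sym^m(E)$ is indeed $A$-stable, pseudo-effective, with $c_1(F)=0$, and your splitting-principle identity $c_2(\Sym^mE)=\bigl(\tfrac12\sum_{|\alpha|=m}(\alpha_1-\alpha_2)^2\bigr)\cdot c_2(E)$ is correct, so $c_2(F)=0$ would force $c_2(E)=0$. The gap is the step you yourself call the ``analytic heart''. The statement you invoke --- an $A$-stable, pseudo-effective reflexive sheaf with $c_1=0$ is Hermitian flat --- is not a result you can simply cite from \cite{HP} or \cite{ccp}: it is not their theorem, and if it were true as stated the hypothesis ``$\Sym^m(E)$ is $A$-stable for some $m>r$'' would be redundant, since stability of $\Sym^m(E)$ already implies stability of $E$ (a subsheaf $S\subset E$ of slope $\geq 0$ yields $\Sym^mS\subset\Sym^mE$ of slope $\geq 0$), and one could then apply your rigidity directly to $E$. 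Moreover the mechanism you describe (comparing the Hermite--Einstein metric with the singular positive metric on $\mathcal{O}_{\mathbb{P}(F)}(1)$ and invoking ``a Bogomolov-type rigidity'') is not an off-the-shelf statement: the standard rigidity of this kind says that a stable bundle with $c_1=0$ \emph{and} $c_2\cdot A^{n-2}=0$ is flat, which is circular here since the vanishing of $c_2$ is exactly what you are trying to prove. So the entire difficulty of the theorem is hidden in an unproved and imprecisely stated black box.

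For comparison, the argument the paper sketches (following H\"oring--Peternell) is structurally different and uses the hypothesis on symmetric powers in a concrete, quantitative way. Assuming $E$ pseudo-effective with $c_1(E)=0$, one first shows, by induction on codimension, that every component $W$ of the restricted base locus $B_-(L)$, $L=\mathcal{O}_{\mathbb{P}(E)}(1)$, satisfies $\mathrm{codim}\, p(W)\geq 2$ in $X$; the stability of the symmetric powers enters here through a Mumford-type lemma on curves ($L^d\cdot Z>0$ for every $Z\subset\mathbb{P}(E_{|C})$ of codimension $d$ when all $\Sym^m(E_{|C})$ are stable). One then restricts $E$ to a general complete-intersection surface $S$ of large degree, avoiding $p(B_-(L))$, to conclude that $E_{|S}$, hence $E$ and $E^*$, are nef, so $E$ is numerically flat and all its Chern classes vanish --- no splitting-principle bookkeeping is needed, since the conclusion is reached for $E$ itself rather than for $\Sym^m(E)$. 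To complete your route you would have to actually establish the flatness of $F$, which is the whole content of the cited works.
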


The proof of \cite{ccp} relies on analytic and metric methods. The previous proof given in \cite{HP} by H\"oring-Peternell is mostly algebro-geometric. It runs along the following lines, starting with $E$ pseudo-effective, with $c_1(E)=0$, and all $Sym^m(E)$ $A$-stable, in order to show that $c_2(E)=0$. The first step shows, by induction on the codimension of $W\subset P:=\Bbb P(E)$, a component of the restricted base locus $B_{-}(L), L:=\mathcal{O}_P(1)$, that $p(W)$ has codimension at least $2$ in $X$, $p:P\to X$ being the natural projection. The key ingredient in this step is following lemma, due to Mumford when $r=rk(E)=2$:

\begin{lemma}(\cite{HP}) Let $E$ be a vector bundle on a smooth projective curve $C$. Let $L:=\mathcal{O}_{\Bbb P(E)}(1)$. Assume that $c_1(E)=0$, and $Sym^m(E)$ is stable for all $m>0$. Then $L^d.Z>0$ for all $d>0$, and any irreducible $Z\subset \Bbb P(E)$ of codimension $d$. 
\end{lemma}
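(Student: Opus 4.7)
Argue by contradiction: suppose an irreducible $Z\subset P:=\Bbb P(E)$ of dimension $d\geq 1$ satisfies $L^d\cdot Z=0$, where $L:=\mathcal{O}_P(1)$ and $p:P\to C$ is the structure map. The hypothesis at $m=1$ says $E$ is stable with $c_1(E)=0$, hence by the Hartshorne--Miyaoka theorem $L$ is nef on $P$, so $L^d\cdot Z\geq 0$ holds automatically. If $Z$ lies in a single fiber $P_c\cong\Bbb P^{r-1}$ then $L|_{P_c}\cong\mathcal{O}(1)$ is ample and $L^d\cdot Z>0$ is immediate, so I may assume $Z$ dominates $C$. When $d\geq 2$, the restriction $L|_Z$ is nef with vanishing top self-intersection, hence not big, so by the duality between pseudo-effective and movable cones (the dual version of Theorem \ref{pseff}) some movable class on $Z$ is killed by $L$; a representative of the corresponding covering family produces an irreducible curve $Z'\subset Z$ with $L\cdot Z'=0$, and $Z'$ cannot sit in a fiber (fiber ampleness), so it too dominates $C$. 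Replacing $Z$ by $Z'$, I may assume $Z$ is an irreducible curve with $L\cdot Z=0$ dominating $C$ of degree $k:=\deg(p|_Z)$.

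\textbf{Adjunction and the evaluation map.} Let $\nu:\tilde Z\to Z$ be the normalization and $g:=p\circ\nu:\tilde Z\to C$, finite surjective of degree $k$. The lift of $\nu$ through the base-change $\Bbb P(g^*E)\to P$ is a section $\sigma:\tilde Z\to\Bbb P(g^*E)$, corresponding to a line bundle quotient $g^*E\twoheadrightarrow M$ where $M:=\sigma^*\mathcal{O}_{\Bbb P(g^*E)}(1)=\nu^*L$ has degree $\deg_{\tilde Z}M=L\cdot Z=0$. Applying $\Sym^m$ yields a surjection $g^*\Sym^m E=\Sym^m(g^*E)\twoheadrightarrow M^{\otimes m}$, and by adjunction a nonzero morphism
\[
\phi_m:\Sym^m E\longrightarrow g_*M^{\otimes m}
\]
on $C$. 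At an unramified $c\in C$, $\phi_m$ is the evaluation map of $(\Sym^m E)_c=H^0(\Bbb P(E_c),\mathcal{O}(m))$ at the $k$ distinct points $\{[M_z]\}_{z\in g^{-1}(c)}\subset \Bbb P(E_c)$; since for any $k$ distinct points in $\Bbb P^{r-1}$ the evaluation map is surjective for $m\gg 0$, $\phi_m$ is generically surjective and $\operatorname{Im}(\phi_m)$ has rank exactly $k$ with torsion cokernel inside $g_*M^{\otimes m}$.

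\textbf{Stability versus degree.} For $m$ large enough that $\binom{r+m-1}{m}>k$, the image $\operatorname{Im}(\phi_m)$ is a proper nonzero torsion-free quotient of the stable bundle $\Sym^m E$ of slope $0$, so stability forces $\deg\operatorname{Im}(\phi_m)>0$. On the other hand, the inclusion $\operatorname{Im}(\phi_m)\hookrightarrow g_*M^{\otimes m}$ has torsion cokernel, so $\deg\operatorname{Im}(\phi_m)\leq \deg g_*M^{\otimes m}$, and Riemann--Hurwitz combined with $\deg M=0$ gives
\[
\deg g_*M^{\otimes m}=m\deg M+(1-g_{\tilde Z})-k(1-g_C)=-\tfrac12\deg R\leq 0,
\]
where $R$ is the ramification divisor of $g$. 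This yields the contradiction $0<\deg\operatorname{Im}(\phi_m)\leq 0$, proving the lemma. The main obstacle is the higher-dimensional reduction to a curve with $L\cdot Z'=0$, which rests on the (standard but nontrivial) BDPP-type statement that a nef non-big class annihilates some movable curve class; once the curve case is isolated, the proof is a clean interplay between the stability of $\Sym^m E$ (making $\deg\operatorname{Im}(\phi_m)>0$) and Riemann--Hurwitz (making $\deg g_*M^{\otimes m}\leq 0$).
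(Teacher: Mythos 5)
First, note that the paper itself gives no proof of this lemma: it is quoted as an ingredient from \cite{HP}, so your argument has to stand on its own. Your treatment of the curve case $d=1$ does stand: reading ``codimension $d$'' as ``dimension $d$'' (the only reading under which $L^d\cdot Z$ is a number, and the one in \cite{HP}), the passage to the normalization $g:\tilde Z\to C$, the quotient $g^*E\twoheadrightarrow M$ with $\deg M=L\cdot Z=0$, the generic surjectivity of $\phi_m:\mathrm{Sym}^mE\to g_*M^{\otimes m}$ for $m\gg0$, and the clash between stability of $\mathrm{Sym}^mE$ (forcing $\deg\operatorname{Im}(\phi_m)>0$) and $\deg g_*M^{\otimes m}=-\tfrac12\deg R\le 0$ is exactly Mumford's classical rank-two argument, correctly carried out in arbitrary rank.

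The reduction of the case $d\ge 2$ to the curve case is, however, a genuine gap. From $L|_Z$ nef and not big you correctly conclude that $L|_Z$ lies on the boundary of the pseudo-effective cone and is therefore orthogonal to some nonzero movable class $\alpha$ (on a resolution of $Z$). But the next step --- ``a representative of the corresponding covering family produces an irreducible curve $Z'\subset Z$ with $L\cdot Z'=0$'' --- does not follow. A movable class orthogonal to $L$ is in general only a limit of positive combinations of classes of covering families, each of which may meet $L$ strictly positively; it need not be represented by any effective curve. The implication you are implicitly invoking, namely that a nef non-big line bundle must have zero intersection with some irreducible curve, is false, and Mumford's example --- which is precisely the $r=2$ instance of this lemma's hypotheses --- is the standard counterexample: there $L$ is nef with $L^2=0$, yet $L\cdot Y>0$ for every irreducible curve $Y\subset\mathbb{P}(E)$. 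So the higher-dimensional case cannot be obtained by this cone-duality reduction; in \cite{HP} it requires a separate induction on $d$ exploiting the fibration of $Z$ over $C$ (the general fiber $Z_c\subset\mathbb{P}(E_c)$ satisfies $L^{d-1}\cdot Z_c>0$, so $L|_Z$ has numerical dimension exactly $d-1$), not a reduction to an honest curve with $L\cdot Z'=0$. As written, your proof establishes the lemma only for $d=1$.
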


The second step then deduces from the first one that the restriction $E_S$ of $E$ to a general complete intersection surface $S\subset X$  of large degree, is nef. So that $E$ and $det(E^*)=det(E)=0$ are nef, and $E^*$ is thus nef too. Hence $E$ is numerically flat, so in particular all Chern classes of $E$ vanish.

From Theorem \ref{ccp} and Theorem \ref{algcrit}, we get:

\begin{corollary}\label{algc1=0} Let $\cF\subset TX$ be a foliation with $c_1(\cF)=0\neq c_2(\cF)$. If $Sym^{[m]}(\cF)$ is $A$-stable for some $m>rk(\cF)$, $\cF$ is algebraic.
\end{corollary}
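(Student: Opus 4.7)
The plan is to combine Theorem \ref{ccp} with Theorem \ref{algcrit}, using the equivalence between pseudo-effectivity of $\cF$ and of $\cF^*$ when $c_1(\cF)=0$, as recorded in the opening remark of this section.

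First, apply Theorem \ref{ccp} to the reflexive sheaf $E := \cF$. Since $\cF$ is saturated in the locally free sheaf $TX$, it is reflexive, so the theorem's reflexive-sheaf version (allowed by its footnote) is available. The hypotheses are precisely what is assumed: $c_1(\cF)=0\neq c_2(\cF)$, and $Sym^{[m]}(\cF)$ is $A$-stable for some $m>r:=rk(\cF)$. The conclusion is that $\cF$ is not pseudo-effective.

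Second, because $c_1(\cF)=0$, the opening paragraph of \S9 notes that pseudo-effectivity of $\cF$ and of $\cF^*$ are equivalent (the relation $\mathcal{O}_{\Bbb P(\cF^*)}(1) = \mathcal{O}_{\Bbb P(\cF)}(1) + p^*(\det \cF^*)$ on the locally free locus, together with $c_1(\det \cF)=0$, trades sections of $Sym^{[m]}(\cF)\otimes A^j$ against sections of $Sym^{[m]}(\cF^*)\otimes A^j$ up to line bundles numerically trivial in the sense of the pseudo-effectivity definition). Hence $\cF^*$ is not pseudo-effective either.

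Finally, apply Theorem \ref{algcrit} to $\cF$: non-pseudo-effectivity of $\cF^*$ gives algebraicity of $\cF$, as required.

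The proof is thus essentially formal, since the two deep inputs are Theorem \ref{ccp} (whose proof via the Höring--Peternell / CCP arguments is the real work) and Theorem \ref{algcrit}. The only non-routine point to verify in this assembly is that Theorem \ref{ccp} applies to $E=\cF$ in its reflexive form rather than as a vector bundle; if one preferred to avoid that extension, one could instead argue on a smooth modification of $\Bbb P(\cF)$ as in Proposition \ref{reflex}, reduce to the locally free case on a big open set of $X$, and run the same two steps there, but the reflexive version makes this detour unnecessary.
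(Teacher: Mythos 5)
Your proposal is correct and follows exactly the route the paper intends: it deduces the corollary ``From Theorem \ref{ccp} and Theorem \ref{algcrit}'' with no further argument, the bridge being precisely the reflexive-sheaf version of Theorem \ref{ccp} applied to $\cF$ together with the equivalence of pseudo-effectivity of $\cF$ and $\cF^*$ when $c_1(\cF)=0$ stated at the opening of that section. (One could even skip that equivalence by applying Theorem \ref{ccp} directly to $\cF^*$, which satisfies the same hypotheses, but this is a cosmetic variant of the same assembly.)
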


The Corollary \ref{algc1=0} (or the slightly weaker version of \cite{HP}) now permits to give an alternative proof of the Beauville-Bogomolov-Yau\footnote{Usually called the Bogomolov-Beauville decomposition. Since Yau's Ricci-flat metric is essential here, BBY seems justified.} (BBY) decomposition theorem in the projective and klt singular case.


\section{The Beauville-Bogomolov-Yau decomposition}

Let $X$ be an $n$-dimensional compact K\"ahler manifold with $c_1(X)=0$. By Yau's theorem, any such $X$ carries a Ricci-flat K\"ahler metric. There are $3$ basic examples of such manifolds, each class characterised by the restricted holonomy group $Hol^0$ of any of their K\"ahler, Ricci-flat, metrics: $Hol^0$ is trivial for 1, is $Sp(4r)$ for 2, and is $SU(n)$ for 3.

1. The compact complex tori are the quotients  $\C^n/\Gamma$, where $\Gamma$ is a cocompact lattice of $\C^n$. 

2. The irreducible hyperk\"ahler (HK) manifolds are defined as the ones which are simply-connected, even-dimensional: $n=2r$, and admit 
a holomorphic symplectic $2$-form $\sigma$ generating $H^0(X,\Omega^2_X)$, and such that $s^{\wedge r}$ is a nowhere vanishing section of $K_X$. There are few known examples: $2$ deformation classes in each even dimension due to Beauville (and Fujiki in dimension $4$), and one additional in dimensions 6 and 10, found by O'Grady. 

3. The Calabi-Yau (CY) manifolds are those which are simply connected, with $h^{p,0}=0,\forall p=1,\dots,(n-1)$, and with $K_X$ trivial. Many deformation families known in each dimension. Among these complete intersections of suitable multidegrees in the projective spaces.

\

The BBY decomposition is the following:

\begin{theorem}(\cite{beauv}) Let $X$ be a connected compact K\"ahler manifold with $c_1(X)=0$. A suitable finite \'etale cover of $X$ is a 
product of a complex compact torus $T$, and of simply-connected CY or HK manifolds.
\end{theorem}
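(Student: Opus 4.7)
The plan is to combine Yau's theorem with the algebraicity criterion of Corollary \ref{algc1=0}. First, Yau produces a Ricci--flat Kähler metric $\omega$ on $X$, and the Kobayashi--Hitchin correspondence (cf.\ Section 6) makes $TX$, together with all its reflexive tensor and symmetric powers, polystable with respect to $\alpha:=[\omega]^{n-1}\in \mathrm{Mov}(X)$. By the Bochner principle every holomorphic $1$-form is parallel, so the parallel $(1,0)$-vectors span a flat trivial direct summand $\mathcal{T}_0\subset TX$ of rank $q:=h^{1,0}(X)$; its $\omega$-orthogonal complement is again parallel, hence integrable. A standard Cheeger--Gromoll argument on the universal cover then furnishes a finite étale cover $X'\to X$ splitting holomorphically as $X'\cong T\times Y$, where $T$ is a complex torus of dimension $q$ and $Y$ is simply connected with $h^{1,0}(Y)=0$.

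The task reduces to decomposing $Y$ into CY and HK factors. Polystability of $TY$ yields $TY=\bigoplus_{i=1}^{s}\cF_i$ as a direct sum of $\alpha$-stable subbundles with $c_1(\cF_i)=0$. Each $\cF_i$ corresponds to an irreducible factor of the restricted holonomy representation of $\omega$, hence is parallel; by Frobenius each $\cF_i$ is a foliation. Since $Y$ is simply connected with $h^{1,0}(Y)=0$, no $\cF_i$ can be flat (a flat $\cF_i$ would be trivial and contribute to $h^{1,0}(Y)$), so $c_2(\cF_i)\neq 0$. Moreover, irreducibility of the holonomy action on $\cF_i$ forces the reflexive symmetric powers $\mathrm{Sym}^{[m]}(\cF_i)$ to be $\alpha$-stable for some $m>\mathrm{rk}(\cF_i)$ (their Hermite--Einstein polystable decomposition contains a single factor of top slope). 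Corollary \ref{algc1=0} then applies and each $\cF_i$ is algebraic, defining a fibration $f_i:Y\to B_i$.

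Form the product $f:=(f_1,\dots,f_s):Y\to B_1\times\cdots\times B_s$. Its differential has kernel $\bigcap_i\cF_i=0$, so $f$ is étale; the dimension identity $\dim Y=\sum\mathrm{rk}(\cF_i)=\sum\dim B_i$ combined with simple connectivity of $Y$ then force $f$ to be an isomorphism $Y\cong\prod Y_i$, where $Y_i$ is the leaf through a fixed base point and $TY_i=\cF_i$. Each $Y_i$ is simply connected, compact Kähler with trivial canonical bundle, and carries an irreducible Ricci--flat metric; Berger's classification leaves exactly two possibilities for the restricted holonomy inside $SU(r_i)$, namely $SU(r_i)$ (Calabi--Yau) or $Sp(r_i/2)$ (irreducible hyperkähler), which completes the decomposition.

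The main obstacle lies in the middle step: ensuring that $\mathrm{Sym}^{[m]}(\cF_i)$ is genuinely stable (not merely polystable) for some $m>\mathrm{rk}(\cF_i)$, since the Ricci--flat metric only delivers polystability a priori. This reduces to ruling out ``accidental'' isomorphisms between irreducible constituents of $\mathrm{Sym}^m$ of the holonomy representation, a representation-theoretic input needed to bridge the algebraic stability hypothesis of Corollary \ref{algc1=0} with the differential-geometric data produced by Yau. The splitting off of the Albanese in the first step is the other delicate point, since one must pass from a parallel holomorphic splitting of $TX$ to an actual product structure on a finite étale cover.
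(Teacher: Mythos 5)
Your strategy is a hybrid of the two arguments the paper gives for this statement: for the flat factor you use the classical route (Bochner, De Rham/Cheeger--Gromoll on the universal cover), and for the remaining factors you use the paper's \emph{alternative} argument via Corollary \ref{algc1=0}. The paper's primary proof is the classical one throughout (restricted holonomy splitting of $T_X$, De Rham decomposition of the universal cover, Cheeger--Gromoll, Bieberbach); its alternative proof avoids the universal cover altogether, splits off one non-flat factor at a time by induction (using the transversal foliation $F\oplus_{j\neq i}T_j$ for local isotriviality and $h^0(F_i,T_{F_i})=0$ to obtain a product after a finite \'etale base change), and only invokes Bieberbach at the very end for the flat factor. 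Your treatment of the non-flat factors is in the spirit of that alternative proof, and buys the same thing: algebraicity of the leaves of the $T_i$'s without lifting them to the universal cover.

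Three concrete points need repair. First, Corollary \ref{algc1=0} rests on Theorem \ref{algcrit}, which is stated and proved for \emph{projective} $X$; the theorem you are proving is for compact K\"ahler $X$, and the paper explicitly restricts its alternative argument to the projective case for exactly this reason. As written, your proof only establishes the projective case. Second, the splitting step fails as stated: if $f_i:Y\to B_i$ is the fibration whose fibres are the leaves of $\cF_i$, then $\dim B_i=\dim Y-\mathrm{rk}(\cF_i)$, not $\mathrm{rk}(\cF_i)$, so $\sum_i\dim B_i=(s-1)\dim Y$ and $(f_1,\dots,f_s)$ is an immersion into a target of strictly larger dimension once $s\geq 3$ --- it is not \'etale. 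You must either use the fibrations defined by the complementary foliations $\bigoplus_{j\neq i}\cF_j$ (whose bases do have dimension $\mathrm{rk}(\cF_i)$), or split off one factor at a time by induction as the paper does. (You also need the leaves to be compact, not merely algebraic, to have the fibrations $f_i$ at all; this is automatic here because the foliations are everywhere regular on a smooth compact $Y$, but it deserves a line.) Third, the issue you flag as the ``main obstacle'' is not one: for the standard representations of $SU(r)$ and $Sp(r)$, every $\mathrm{Sym}^m$ is already irreducible (Fulton--Harris, as cited in the paper), so the parallel bundle $\mathrm{Sym}^m(\cF_i)$ has irreducible holonomy and is therefore stable --- there is a single constituent and no accidental isomorphism to exclude.
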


Let us sketch the classical proof given in \cite{beauv}: We equip $X$ with some Ricci-flat K\"ahler metric (\cite{YAU}). Let $Hol^0$ be its restricted holonomy representation, and $T_X=F\oplus (\oplus_i T_i)$ be the splitting of the tangent bundle of $X$ into factors which are irreducible for the action of $Hol^0$. This splitting is well-defined locally on $X$, and globally only up to possible reordering. These local factors correspond also to a local splitting of $X$ into a direct product of K\"ahler submanifolds. By Berger general classification of the holonomy representations , only a flat factor $F$ appears, together with other factors $T_i$ having restricted holonomy either $Sp$ or $SU$ (the ones susceptible to preserve a section of $K_X$ by parallel transport).

We lift this decomposition to the universal cover $\widetilde{X}$ of $X$ on which the metric is complete. By De Rham decomposition theorem, $\widetilde{X}$ splits as a K\"ahler product, corresponding to these $Hol^0$-factors (\cite[Theorem 10.43]{bes}). The Cheeger-Gromoll theorem then says that $\widetilde{X}=\C^k\times P$, where $\C^k$ corresponds the flat factor, the other factor $P$ being compact, and corresponding to the product of the factors with $Hol^0$ either $Sp$ or $SU$ (\cite[Theorem 6.65]{bes}). Bieberbach's theorem (\cite{bieber}) then  concludes the proof.

Notice that this shows in particular that the (everywhere regular) foliations defined on $X$ by the factors $T_i$ have compact leaves (with finite fundamental groups), and are thus `algebraic' (i.e: with equal Zariski and topological closures of their leaves).

We shall now give, but only in the projective case, an alternative proof which does not require the consideration of the universal cover. This is done by proving directly, using Theorem \ref{ccp}, that the foliations defined by the $T_i's$ are all algebraic. This follows from the following:

\begin{lemma} For each $i$, and each $m>0,$ $c_1(T_i)=0$, and $Sym^m(T_i)$ is $A$-stable for any ample line bundle $A$ on $X$. 
\end{lemma}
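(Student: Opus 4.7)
The plan is to equip $X$ with a Ricci--flat K\"ahler metric adapted to $A$ and to apply the Kobayashi--Hitchin correspondence to each holonomy summand. For a given ample $A$, Yau's theorem produces a Ricci--flat K\"ahler metric $\omega$ with $[\omega]=c_1(A)$. Because $\omega$ is K\"ahler the Chern connection on $T_X$ coincides with the complexified Levi--Civita connection, hence the $Hol^0$-splitting $T_X=F\oplus\bigoplus_i T_i$ is parallel for the Chern connection, and each $T_i$ is in particular a holomorphic subbundle carrying a compatible Hermitian metric.

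The vanishing $c_1(T_i)=0$ is a direct consequence of the holonomy principle applied to $\det T_i$: its induced holonomy is the image of $Hol^0(T_i)\in\{SU(n_i),Sp(2r_i)\}$ under the determinant character, which is trivial since $SU$ and $Sp$ both lie in $SL$. Equivalently, the Ricci form of $\omega$ decomposes as a sum of traces on the parallel summands, and Ricci--flatness forces the trace on each $T_i$ (i.e.\ the curvature of $\det T_i$) to vanish identically. The same Ricci--flatness gives $\sqrt{-1}\,\Lambda_\omega\Theta_{T_i}=0$, so $T_i$ is Hermite--Einstein with Einstein factor $0$; symmetric powers inherit this property, so $\mathrm{Sym}^m(T_i)$ is Hermite--Einstein with Einstein factor $0$. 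The Kobayashi--Hitchin correspondence (\cite{T}) then gives $A$-polystability of $\mathrm{Sym}^m(T_i)$ for the class $[\omega]=c_1(A)$.

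To upgrade polystability to $A$-stability I would check that $\mathrm{Sym}^m(T_i)$ is simple, which for a Hermite--Einstein bundle is equivalent to the irreducibility of the restricted holonomy action. Here that action is the $m$-th symmetric power of the standard representation of $SU(n_i)$ or $Sp(2r_i)$; both $\mathrm{Sym}^m(\C^n)$ as an $SU(n)$-module and $\mathrm{Sym}^m(\C^{2r})$ as an $Sp(2r)$-module are irreducible, being the unique irreducible representation of highest weight $m\varpi_1$. Consequently $\mathrm{Sym}^m(T_i)$ is simple and therefore $A$-stable.

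The one genuine subtlety, which I expect to be the main obstacle, is that the summands $T_i$ are a priori defined in terms of a fixed Ricci--flat metric, while the lemma asserts stability with respect to \emph{every} ample $A$. I would resolve this by noting that the holomorphic splitting $T_X=F\oplus\bigoplus_i T_i$ is intrinsic (it is dictated by the Bogomolov splitting of the universal cover and does not depend on the chosen Ricci--flat metric), so that given any other ample $A'$ one may rerun the same argument with a Ricci--flat metric in the class $c_1(A')$ and recover $A'$-stability of the \emph{same} holomorphic bundles $\mathrm{Sym}^m(T_i)$.
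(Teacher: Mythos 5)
Your proof is correct and follows essentially the same route as the paper: the key input in both is that $\mathrm{Sym}^m$ of the standard representation of $SU(n_i)$ or $Sp(2r_i)$ is irreducible (\cite{fh}), combined with the Hermite--Einstein/Kobayashi--Hitchin correspondence to convert irreducible holonomy into slope stability. Your direct argument for $c_1(T_i)=0$ via the triviality of the determinant character on $SU,Sp\subset SL$ is if anything cleaner than the paper's (which deduces it from $c_1(TX)=c_1(F)=0$ and the second claim), and your explicit treatment of the dependence on the polarisation $A$ addresses a point the paper leaves implicit --- though note that for the intended application (Theorem \ref{ccp}) stability with respect to a single ample $A$ already suffices.
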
 

\begin{proof} Since $c_1(TX)=c_1(F)=0$, the first claim follows from the second one. The second claim is a consequence of representation theory of the groups $SU$ and $Sp$ (\cite{fh}).
\end{proof}

If we take thus the (regular and submersive) fibration $f_i:X\to B$ with fibres the (projective) leaves of $T_i$, it is isotrivial of fibre $F_i$ (by the existence of the transversal foliation defined by $F\oplus_{j\neq i} T_j)$, and $F_i$ has a vanishing irregularity and a discrete automorphism group (because the restricted holonomy is either $SU$ or $Sp)$, which implies that after a finite \'etale base change $B'\to B$, $X$ splits as a product $F_i\times B'$. 

Induction on $dim(X)$ now reduces the proof to the case where $TX=F$, the flat factor. Bieberbach theorem (\cite{bieber}) then permits to conclude. For details, see \cite{Ca20}.

This BBY decomposition is still valid for projective varieties with $klt$ singularities, but the proof, obtained first in \cite{HP}, and using a combination of the works \cite{EGZ}, \cite {GKP}, \cite{GGK}, \cite{Dr}, is much more involved, both technically and conceptually.

A simplified proof, which closely follows the above alternative proof in the smooth case, and avoids the delicate arguments in positive characteristic used in \cite{Dr}, can be found in \cite{Ca20}.
The result has then been extended to the K\"ahler case in \cite{BGL}, by deformation to the projective case.



\section{Some questions}

We denote by $X$ a smooth and connected complex projective manifold\footnote{The questions raised below could be extended to the compact K\"ahler case, and to smooth or klt orbifold pairs.}, of dimension $n$. Let $\cE$ be a torsionfree coherent sheaf on $X$.



\medskip

\subsection{Pseudoeffectivity of determinants.}

\begin{definition} We say that $\cE$ is pseudo-effective if so is $det(Q)$, for each nonzero quotient of $\cE^{**}$. Equivalently, $\mu_{\alpha,min}(\cE^{**})\geq 0, \forall \alpha\in Mov(X)$.
\end{definition} 

\begin{example} 1. If $\cE$ is pseudo-effective, so is any of its quotients, as well as $\otimes^m\cE, Sym^m\cE, \wedge^m\cE, \forall m>0$, and more generally the tensors deduced from $\cE$ by linear representations, by Theorem \ref{mu}.

2. If $K_X=det(\Omega^1_X)$ is pseudo-effective, so is $\Omega^1_X$, as seen in Theorem \ref{qpseff}. In \cite{CPa}, the same property is shown, more generally, for any quotient of $\otimes^m(\Omega^1_X)$ with pseudo-effective determinant.
\end{example}

We ask the same question also for subsheaves of $\otimes^m(\Omega^1_X)$:

\begin{question} Let $\cE\subset \otimes^m\Omega^1_X$ be such that $det(\cE)$ is pseudo-effective. Is $\cE$ pseudo-effective? This is a birational version of stability.
\end{question}

Notice that if $\cE\subset \Omega^p_X$ is saturated and defines an {\it algebraic} foliation of rank $r=(n-p)$, the answer is yes by \cite{CPa}, since $\cE$ is then the cotangent sheaf of the space of leaves.

This question might be answered by showing the relative algebraicity of two transcendental foliations deduced from suitable quotients of $\Omega^1_X$, using a suitable extension of the next algebraicity criterion in \cite{CPa}.

\medskip

Recall this algebraicity criterion (theorem \ref{Mu-alg}): if $\cF\subset TX$ is a foliation such that $\cF^*$ (which is a quotient of $\Omega^1_X$) is not pseudo-effective, the leaves of $\cF$ are algebraic. 
Moreover, if $\mu_{\alpha,max}(\cF^*)<0$ for some $\alpha\in Mov(X)$, the leaves of $\cF$ are rationally connected.

\begin{question} If $\cF^*$ is not pseudo-effective, are there restrictions on the leaves of $\cF$, and which ones?
\end{question}

Already when $\cF=TX$, the answer is nontrivial, and consists in the classification of manifolds $X$ with $\Omega^1_X$ not pseudo-effective, or equivalently with $\nu_1(X)=-\infty$, using the notations of the next subsection.
Although such manifolds may have an ample, or a trivial canonical bundle, as shown by hypersurfaces in $\Bbb P^{n+1}$, they share some common properties with the rationally connected manifolds. For example, rationally connected manifolds are simply-connected, and it is shown in \cite{BKT} that the linear representations of $X$ have a finite image if $\Omega^1_X$ is not pseudo-effective. For manifolds of general type, even for surfaces, the invariant $\kappa_1$ (see below), does not seem to be described by, or even related to, other algebro-geometric invariants. For special manifolds, in the sense of definition \ref{dspec} below, the situation might be much simpler (see conjecture \ref{qspec}).

An additional problem is that if the foliation $\cF$ has $\cF^*$ not pseudo-effective, and so if $\cF$ is algebraic, it is not clear whether or not the general fibres of an associated fibration have $\Omega^1$ not pseudo-effective. The proof when $\mu_{\alpha,min}(\cF)>0$ relies on the MRC, and it is unknown whether a similar fibration may be expected to exist on any $X$, with fibres having $\Omega^1$ not pseudo-effective, and base with $\Omega^1$ pseudo-effective.



\medskip

\subsection{Variations on Abundance.}

Let $\cE$ be torsionfree coherent on $X$, of rank $r$, and let $\Bbb P(\cE), \mathcal{O}_{\cE}(1)$ be the corresponding (Grothendieck) projectivisation, together with its tautological line bundle. 

\begin{definition} We write $\kappa(\cE):=\kappa(\Bbb P(\cE), \mathcal{O}_{\cE}(1))-(r-1)$, and $\nu(\cE):=\nu(\Bbb P(\cE), \mathcal{O}_{\cE}(1))-(r-1)$. We also write $\nu_p(X):=\nu(\Omega^p_X)$, and $\kappa_p(X):=\kappa(\Omega^p_X)$, so that: $\nu(X)=\nu_n(X)$, and $\kappa(X):=\kappa_n(X)$. 
\end{definition}

The definition fits with the original one for line bundles. Moreover, the invariants $\kappa_p$ and $\nu_p$ are preserved under finite \'etale covers. The invariant $\kappa_1$ has been introduced and studied by F. Sakai in \cite{Sak}\footnote{F. Sakai uses the `invariant $\lambda$', which takes the value $-n$ when $\kappa_1=-\infty$, but otherwise coincides with $\kappa_1$. His convention simplifies the formulations in some situations, such as products.} . 

The classical `Abundance conjecture' $\nu_n(X)=\kappa_n(X)$ extends as:

\begin{conjecture}\label{abdc} For any $X$ and $p>0$, one has: $\nu_p(X)=\kappa_p(X)$. 
\end{conjecture}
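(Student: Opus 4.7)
The plan is to prove the inequality $\nu_p(X) \leq \kappa_p(X)$; the reverse is immediate from Proposition \ref{nuvsk}. The case $p = n$ is the classical Abundance conjecture $\nu(X,K_X) = \kappa(X,K_X)$, which I take as input. For $1 \leq p < n$, I would proceed by induction on $n = \dim X$, splitting the argument according to whether $K_X$ is pseudo-effective and reducing in the uniruled case to a smaller-dimensional base via the MRC fibration.

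Suppose first that $K_X$ is pseudo-effective. The antisymmetrization embedding $\Omega^p_X \hookrightarrow \otimes^p \Omega^1_X$ induces, in characteristic zero, an embedding $Sym^m \Omega^p_X \hookrightarrow \otimes^{mp} \Omega^1_X$, so every rank-one saturated subsheaf $L \subset Sym^m \Omega^p_X$ is a rank-one subsheaf of $\otimes^{mp} \Omega^1_X$. Theorem \ref{birstab} then gives $\nu(X, L) \leq \nu(X, K_X)$, and classical Abundance turns this into $\nu(X, L) \leq \kappa(X, K_X) = \kappa_n(X)$. To upgrade this line-by-line bound to a bound on the full $\nu_p(X)$, I would filter $Sym^m \Omega^p_X$ by the symmetric powers of the $\alpha$-Harder-Narasimhan filtration of $\Omega^p_X$ (using Theorem \ref{mu}), bound the maximal slopes of the graded pieces, and compare the asymptotic growth of sections of $Sym^m \Omega^p_X \otimes A$ to the growth of sections through the top line-bundle piece.

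If instead $K_X$ is not pseudo-effective, let $f \colon X \to Z$ be a regular neat model of the MRC fibration. By Corollary \ref{birstab'} together with its Lemmas \ref{l} and \ref{l'}, every rank-one $L \subset \otimes^{mp} \Omega^1_X$ with $\nu(X, L) \geq 0$ is contained in $f^*(\otimes^{mp} \Omega^1_Z)^{sat}$ and satisfies $\nu(X, L) \leq \nu(Z, K_Z)$. The same filtration-plus-fibration procedure should then reduce the computation of $\nu_p(X)$ to that of $\nu_p(Z)$ (or $\nu(Z,K_Z)$ directly); since $\dim Z < n$ because $X$ is uniruled, the inductive hypothesis applies on $Z$, and classical Abundance on $Z$ closes the loop.

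The principal obstacle is the passage from line-bundle estimates $\nu(X,L) \leq B$ (for every saturated rank-one $L \subset Sym^m \Omega^p_X$) to the sheaf-level statement $\nu_p(X) \leq B$: by definition $\nu_p(X)$ is controlled by sections of $\mathcal{O}_{\Bbb P(\Omega^p_X)}(m) \otimes p^* A$, and one must ensure that their asymptotic growth is detected by a single Harder-Narasimhan graded piece, after possibly further stratifying into line-bundle sub-pieces. This in turn requires a careful analysis of $Mov(\Bbb P(\Omega^p_X))$, its compatibility with $Mov(X)$, and an application of Proposition \ref{reflex} and Corollary \ref{Epseff} to absorb the singularities of $\Omega^p_X$ and the difference between ordinary and reflexive symmetric powers. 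A secondary obstacle is that the whole argument is conditional on the classical Abundance conjecture for $X$ and for $Z$, which is itself open, so an unconditional proof would require genuinely new input on the numerical versus effective dimension for tautological line bundles on projectivizations of cotangent-like sheaves.
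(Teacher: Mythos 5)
You are attempting to prove a statement that the paper itself presents as an open conjecture (Conjecture \ref{abdc}): no proof is given there, and the remark that follows it records only that the case where all $\kappa_p(X)=-\infty$ reduces to classical Abundance via the MRC fibration, and that \cite{HP20} treats elliptic surfaces and a few other cases. So there is nothing in the paper to compare your argument to, and your proposal does not close the gap. The first problem is the one you acknowledge: the whole scheme is conditional on the classical Abundance conjecture for $X$ and for the MRC base $Z$, so at best it would be a conditional reduction, not a proof.

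The second and more structural problem is that your chain of inequalities targets the wrong invariant. Theorem \ref{birstab} and classical Abundance give $\nu(X,L)\leq \kappa(X,K_X)=\kappa_n(X)$ for rank-one subsheaves $L$, but the conjecture asserts $\nu_p(X)=\kappa_p(X)$, and $\kappa_p(X)$ is in general strictly smaller than $\kappa_n(X)$: for a simply connected Calabi--Yau manifold the Example following the conjecture gives $\nu_1(X)=\kappa_1(X)=-\infty$ (since $\tilde{q}(X)=0$), while $\kappa(X,K_X)=0$, so the bound $\nu_1(X)\leq 0$ you would obtain says nothing about the required equality. Finally, the step you flag as the ``principal obstacle'' --- transporting line-bundle estimates on rank-one pieces of a Harder--Narasimhan stratification up to the tautological class on $\Bbb P(\Omega^p_X)$ --- is not merely technical: Remark \ref{brQ} exhibits rank-one subsheaves $L\subset\Omega^1_X$ (the Brunella--McQuillan tautological foliations on quotients of the bidisc) with $\nu(X,L)=1$ and $\kappa(X,L)=-\infty$, so $\nu$ and $\kappa$ genuinely disagree on exactly the strata you propose to reduce to. Any proof of the conjecture must therefore contain an idea that does not factor through rank-one subsheaves of tensor powers of $\Omega^1_X$, and no such idea appears in your proposal.
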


\begin{remark} 1. In \cite{HP20}, this is stated when $\kappa_p(X)=-\infty$, and proved for elliptic surfaces (except for some isotrivial ones), and some other additional cases.

2. If $\kappa_p(X)=-\infty, \forall p>0$, and if we assume the classical `Abundance conjecture', we get by the MRC fibration that $X$ is rationally connected, and so that $-\infty=\kappa_p(X)=\nu_p(X), \forall p>0$. Conjecture \ref{abdc} is thus a consequence of its `classical' version in this situation. 
\end{remark}

\begin{example} 1. If $c_1(X)=0$, from the Beauville-Bogomolov-Yau decomposition theorem, and \cite{HP}, one can easily see\footnote{In the next formula,  if $\tilde{q}(X)=0$, replace Sakai's $-n$ by $-\infty$.} that $\nu_1(X)=\kappa_1(X)=\tilde{q}(X)-n$, where $\tilde{q}(X)$ is the maximum of $q(X')$ when $X'$ runs through the finite \'etale covers of $X$. The same properties can be still shown with more work\footnote{Because one needs to carefully distinguish \'etale from quasi-\'etale covers, as shown by the Kummer $K3's$.}  when $X$ is a smooth model of a klt variety with $c_1=0$. Thus $\kappa_1(X)=\nu_1(X)=\tilde{q}(X)-n$ whenever $\kappa(X)=0$, if one assumes the existence of good minimal models.
\end{example}

\begin{remark}\label{brQ}  The Abundance conjecture $\nu=\kappa$ fails in general for  rank-one subsheaves $L$ of $\Omega^p_X$, by the examples of M. Brunella and M. Mc Quillan of the (duals of the) tautological foliations on irreducible free quotients of the bidisc, which have $\kappa=-\infty$ and $\nu=1$. 
\end{remark}

\begin{question} Are there rank-one subsheaves $L$ of $\Omega^p_X$ for suitable $X, p>0$, with $p=\nu(X,L)>\kappa(X,L)\geq 0$? As seen just above, there are examples with $p=\nu(X,L)>\kappa(X,L)=-\infty$.
\end{question}



\medskip

\subsection{Special manifolds.}

Recall that from Bogomolov's theorem that $\kappa(X,L)\leq p, \forall X, \forall p>0$, if $L$ is any rank-one subsheaf of $\Omega^p_X$.

\begin{definition}\label{dspec} We say that $X$ is special\footnote{We may assume $X$ to be compact K\"ahler, here.} if $\kappa(X,L)<p$, for any such $L$, and any $p>0$; 
\end{definition}

Bogomolov's theorem has been strengthened by C. Mourougane and S. Boucksom to $\nu(X,L)\leq p, \forall X, \forall p>0$. 

\begin{conjecture} If $X$ is special, $\nu(X,L)<p, \forall L\subset \Omega^p_X,\forall p>0$, $L$ of rank one. 
\end{conjecture}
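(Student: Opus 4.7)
The approach is by contradiction. Assume $X$ is special while $L\subset \Omega^p_X$ is a saturated rank-one subsheaf with $\nu(X,L)=p$, the extremal value allowed by the Bogomolov--Boucksom--Mourougane bound. The plan is to produce, on a suitable smooth birational model, a meromorphic fibration $f\colon X\dasharrow Y$ of relative dimension $n-p$ together with an inclusion $L\hookrightarrow f^*(K_Y)^{sat}$. Granting this factorization, pull-back gives $\nu(Y,K_Y)\geq \nu(X,L)=p=\dim Y$, which by Proposition \ref{nuvsk}.2 forces $K_Y$ to be big, hence $\kappa(Y,K_Y)=p$. Pulling back pluricanonical sections along $f$ would then yield $\kappa(X,L)\geq p$, contradicting the specialness of $X$.

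The construction of $f$ would proceed in two steps: integrability followed by algebraicity. For integrability, the condition $\nu(X,L)=p$ should suffice, through a numerical version of the Bogomolov--Castelnuovo--de Franchis theorem (in the spirit of the positivity arguments of Boucksom--Demailly--Paun), to guarantee that local sections $\omega$ of $L$ are decomposable and satisfy $d\omega=\eta\wedge \omega$, so that $L^{**}$ is the conormal determinant $\det(TX/\mathcal{F})^{*}$ of an a priori only holomorphic corank-$p$ foliation $\mathcal{F}\subset TX$. A short determinant computation yields $\det(\mathcal{F}^{*})=K_X\otimes L^{-1}$, and more generally the numerical positivity of $L$ tends to push $\mathcal{F}^{*}$ away from pseudoeffectivity, which is the input needed for the second step.

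For algebraicity, the plan is to apply Theorem \ref{algcrit} and show that $\mathcal{F}^{*}$ is not pseudo-effective; the fibration $f$ is then the one associated with an algebraic foliation as in Example \ref{ex2}. This is the step where specialness must enter essentially. The Brunella--McQuillan examples of Remark \ref{brQ} show that $\nu(X,L)=p$ alone is compatible with $\mathcal{F}$ being transcendental and $\mathcal{F}^{*}$ pseudo-effective, but those examples live on bidisc quotients of general type, which are not special. A plausible route is to show that, on a special $X$, any corank-$p$ foliation $\mathcal{F}$ with $\mathcal{F}^{*}$ pseudo-effective must descend through the (by hypothesis trivial) core fibration $c_X\colon X\dasharrow C(X)$, via the orbifold/logarithmic extension of Theorem \ref{algcrit} alluded to in the abstract: triviality of $C(X)$ would then force $\mathcal{F}=TX$, whence $p=0$ and no $L$ violating the conjecture exists.

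The main obstacle is precisely this last passage. The numerical integrability statement is essentially known, so transcendental foliations $\mathcal{F}$ whose conormal determinant has $\nu=p$ can indeed arise; ruling them out on special $X$ requires a genuinely new input. Two plausible routes are: (i) the full orbifold refinement of Theorem \ref{algcrit}, combined with Campana's structural description of the core fibration of non-special manifolds, so that a pseudo-effective foliation dual with $\nu$ extremal always produces a non-trivial orbifold-general-type quotient; or (ii) a proof of the numerical abundance equality $\nu(X,L)=\kappa(X,L)$ for rank-one $L\subset \Omega^p_X$ on special $X$, from which the conjecture is immediate. Either route lies at the frontier of current techniques in birational geometry and orbifold theory, which likely explains why the statement is formulated only as a conjecture.
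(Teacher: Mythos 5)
First, a point of calibration: the paper contains no proof of this statement. It is stated as a conjecture, with only the remark that the case $p=1$ has been proved in \cite{PRT21}; so there is no argument of the author's to compare yours against, and any complete "proof" would be a genuinely new result. Your text is a strategy sketch rather than a proof, and to your credit you say so. Still, two of its steps deserve to be flagged as real gaps rather than routine lemmas.

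The first gap is the "numerical Castelnuovo--de Franchis" step. Bogomolov's theorem produces a corank-$p$ foliation (indeed a fibration over a $p$-dimensional base) from a rank-one $L\subset\Omega^p_X$ with $\kappa(X,L)=p$, and its proof uses actual sections of $mL$ to build the map. Replacing $\kappa$ by $\nu$ destroys that mechanism: when $\nu(X,L)=p$ but $\kappa(X,L)=-\infty$ there are no sections to integrate, and neither decomposability of the local generator $\omega$ nor the Frobenius condition $d\omega=\eta\wedge\omega$ is known to follow from the numerical hypothesis for $p>1$. Calling this "essentially known" overstates the situation; it is part of what makes the conjecture hard. The second and deeper gap is the one you identify yourself: specialness never enters your argument through any concrete mechanism. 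Your route (ii) (proving $\nu(X,L)=\kappa(X,L)$ for rank-one subsheaves of $\Omega^p_X$ on special $X$) is essentially a restatement of the conjecture combined with the definition of specialness, hence circular as a proof strategy; your route (i) appeals to an orbifold extension of Theorem \ref{algcrit} and of the core machinery that is precisely the missing ingredient. A smaller but genuine issue is the inequality $\nu(Y,K_Y)\geq\nu(X,L)$: from $L\hookrightarrow f^*(K_Y)^{sat}$ you only control $\nu$ of the saturation, which differs from $f^*(K_Y)$ by the multiple-fibre (orbifold) divisor; elliptic fibrations over $\Bbb P^1$ show this discrepancy can change $\nu$ from negative to maximal, so the contradiction with Proposition \ref{nuvsk}.2 must be run on the orbifold base $(Y,\Delta)$, which again forces the orbifold framework you do not have. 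In short: the skeleton (foliate, prove algebraicity via Theorem \ref{algcrit}, contradict specialness via a general-type quotient) is the natural one and consistent with the paper's point of view and with Remark \ref{brQ}, but both load-bearing steps are open, which is exactly why the paper records the statement only as a conjecture.
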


This has been proved in \cite{PRT21} for $p=1$. Notice that if $L\subset \Omega^1_X$ is of rank one with $\nu(X,L)=1$, there may be no $L'\subset  \Omega^1_X$ with $\kappa(X,L')=1$, as shown by the example of Brunella and Mc Quillan in remark \ref{brQ}, since no finite \'etale cover of the ambiant surface maps onto a curve of positive genus. 

\begin{conjecture}\label{qspec} Assume $X$ (smooth compact K\"ahler) is special. Do we have\footnote{If $\tilde{q}(X)=0$, replace the RHS $-n$ by $-\infty$.} : $\nu_1(X)=\kappa_1(X)=\tilde{q}(X)-n$? 

If yes, then $\nu_1(X)=\kappa_1(X)=-\infty$, that is $\Omega^1_X$ not pseudo-effective, if and only if $\tilde{q}(X)=0$, if and only if $\pi_1(X)$ is finite.  \end{conjecture}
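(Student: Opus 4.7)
The plan is to split the conjecture into a lower bound, an upper bound, and the $\pi_1$ characterisation.

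\textbf{Lower bound.} Suppose $\tilde{q}(X) > 0$ and choose a finite \'etale cover $\pi: X' \to X$ with $q(X') = \tilde{q}(X) =: q$. The Albanese map $a: X' \to \mathrm{Alb}(X')$ embeds $a^*\Omega^1_{\mathrm{Alb}(X')} \cong \cO_{X'}^q$ as a saturated subsheaf of $\Omega^1_{X'}$, so $\Sym^m \Omega^1_{X'}$ contains a trivial subsheaf of rank $\binom{m+q-1}{q-1}$. Hence both $h^0(X',\Sym^m \Omega^1_{X'})$ and $h^0(X', \Sym^m \Omega^1_{X'} \otimes A)$ grow at least like $m^{q-1}$ for any ample $A$, which gives $\kappa_1(X') \geq q - n$ and $\nu_1(X') \geq q - n$. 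As both invariants are preserved by finite \'etale covers, we deduce $\kappa_1(X) \geq \tilde{q}(X) - n$ and $\nu_1(X) \geq \tilde{q}(X) - n$; no speciality is used for this part.

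\textbf{Upper bound.} For the converse I would combine \cite{PRT21} (speciality of $X$ forces $\nu(X,\cL) \leq 0$ for every rank-one $\cL \subset \Omega^1_X$) with the Harder--Narasimhan filtration of $\Omega^1_X$ relative to $\alpha \in Mov(X)$. Suppose $\mu_{\alpha,\max}(\Omega^1_X) > 0$ for some such $\alpha$. Dualising the maximal destabilising subsheaf $\cG \subset \Omega^1_X$ and applying Corollary \ref{mumin>0} together with Theorem \ref{algrc} yields an algebraic foliation $\cF \subset TX$ with $\mu_{\alpha,\min}(\cF) > 0$ and rationally connected leaf closures, hence a fibration $f: X \dashrightarrow Z$ with $\dim Z < n$. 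Iterating the analysis on $Z$ and invoking Corollary \ref{birstab'} together with the Viehweg--Zuo mechanism would produce a big subsheaf in some $\Sym^m \Omega^p_X$, contradicting speciality. Thus every HN slope of $\Omega^1_X$ is $\leq 0$. The semistable pieces of slope exactly $0$ are numerically flat and, by Simpson's correspondence, arise from a unitary local system; the maximal such subsheaf is, up to a further \'etale cover, identified with the Albanese piece $a^*\Omega^1_{\mathrm{Alb}(X')}$ of rank $\tilde{q}(X)$. Counting sections through the HN filtration then gives $\nu_1(X) \leq \tilde{q}(X) - n$, which collapses to $-\infty$ exactly when $\tilde{q}(X) = 0$; the same argument with the ample twist removed bounds $\kappa_1(X)$.

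\textbf{Characterisation via $\pi_1$.} Granted the equality $\nu_1(X) = \tilde{q}(X) - n$, the equivalence $\nu_1(X) = -\infty \Leftrightarrow \tilde{q}(X) = 0$ is tautological under the footnote convention. The equivalence $\tilde{q}(X) = 0 \Leftrightarrow \pi_1(X)$ finite, for $X$ special, is a form of Campana's general conjecture that the fundamental group of a special manifold is virtually abelian; granting it, $\tilde{q}(X) = 0$ means $\pi_1(X')^{\mathrm{ab}}$ is finite for every \'etale cover $X' \to X$, forcing $\pi_1(X)$ itself to be finite, while the converse is immediate since $\pi_1$ finite kills all irregularity on \'etale covers.

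\textbf{Main obstacle.} The crux is the upper bound, specifically propagating the rank-one speciality bound of \cite{PRT21} to all of $\Sym^m \Omega^1_X$ and identifying the $\mu = 0$ semistable part with the Albanese piece. This is essentially the $p = 1$ case of Conjecture \ref{abdc} (abundance for $\Omega^1_X$) together with a fragment of Campana's core-fibration conjecture, both currently open. The $\pi_1$ equivalence further depends on the virtually-abelian conjecture for $\pi_1$ of special manifolds, which is itself unresolved. Thus the present strategy reduces Conjecture \ref{qspec} to two established open programmes rather than settling it outright.
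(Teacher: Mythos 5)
The statement you are addressing is posed in the paper as an open conjecture, not a theorem: the paper offers no proof, only the closing remark that $\kappa_1(X)\geq \tilde{q}(X)-n$ whenever every finite \'etale cover of $X$ has surjective Albanese map (as is the case when $X$ is special), together with the observation that the second paragraph would follow from the Abelianity conjecture. Your proposal, as you yourself concede in your final paragraph, does not close the conjecture either; it reduces it to the $p=1$ case of Conjecture \ref{abdc} and to the virtually-abelian conjecture for fundamental groups of special manifolds, both open. So there is no complete proof here, and none in the paper to compare it with.

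Two concrete points in your sketch should nevertheless be flagged. First, your lower bound is \emph{not} independent of speciality: the sheaf map $\cO_{X'}^{q}\to \Omega^1_{X'}$ defined by a basis of $H^0(X',\Omega^1_{X'})$ is injective only when the Albanese image of $X'$ has dimension $q$; for a curve of genus $2$ one has $\tilde{q}=\infty$ while $\kappa_1=1$, so $\kappa_1\geq\tilde{q}-n$ fails without the surjectivity of the Albanese maps of all \'etale covers --- which is precisely the hypothesis under which the paper records this inequality, and which is a consequence of speciality. Second, your passage from the rank-one bound of \cite{PRT21} to $\mu_{\alpha,\max}(\Omega^1_X)\leq 0$ for all $\alpha\in Mov(X)$ is not justified: the maximal destabilising subsheaf $\cG\subset\Omega^1_X$ may have rank greater than one, in which case $\det(\cG)$ is not a rank-one subsheaf of $\Omega^1_X$ to which \cite{PRT21} applies; one would need a bound on $\nu$ for rank-one subsheaves of $\otimes^{r}\Omega^1_X$ or $\Omega^{r}_X$ with $r=\mathrm{rk}(\cG)$, which is again the open higher-$p$ statement. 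The subsequent identification of the slope-zero semistable part with the Albanese piece via Simpson's correspondence is asserted rather than argued, and the deduction of $\nu_1(X)\leq\tilde{q}(X)-n$ from it by ``counting sections through the HN filtration'' is the genuinely missing step.
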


Recall that the Abelianity conjecture claims that $\pi_1(X)$ is virtually abelian if $X$ is special, so that $\tilde{q}(X)=0$ should then mean that $\pi_1(X)$ is finite.

Notice that if $X$ is a compact K\"ahler complex manifold with all of its finite \'etale covers having a surjective Albanese map (as is the case when $X$ is special), then $\kappa_1(X)\geq \tilde{q}(X)-n$. 

\medskip

{\bf Acknowledgement:} We thank the reviewers for their careful readings, corrections and suggestions, which lead to improvements of the text.

\end{document}